\newif\ifpdf
\numberwithin{equation}{section}       % Number formulas within sections
\newtheorem{prop} {Proposition} [section]
\newtheorem{thm}[prop] {Theorem} 
\newtheorem{quest} {Question} 
\newtheorem{defi}[prop] {Definition}
\newtheorem{lem}[prop] {Lemma}
\newtheorem{cor}[prop]{Corollary}
\newtheorem{prop-def}[prop]{Proposition-Definition}
\newtheorem{exa}[prop]{Example}
\theoremstyle{remark}
\newtheorem{rem}[prop]{Remark}
\newcommand{\C}{{\mathbb{C}}}
\newcommand{\R}{{\mathbb{R}}}
\newcommand{\Rn}{{\mathbb R^n}}
\newcommand{\D}{\Delta}
\title[Regularity of geodesics]{Regularity of geodesics in the spaces of convex and plurisubharmonic functions II}
\date{\today}
\author{Soufian Abja, S\l awomir Dinew}
\keywords{Monge-Amp\`ere equation, regularity}
\address{Mathematisches Institut, Universitat Münster, 48149 Münster, Germany}
\email{sabja@uni-muenster.de}
\address{Institute of Mathematics, Jagiellonian University, ul Lojasiewicza 6, 30-348 Krak\'ow, Poland}
\email{Slawomir.Dinew@im.uj.edu.pl }
\subjclass[2010]{Primary: 35J96, secondary: 35J70}
\begin{document}
	\begin{abstract}
		In this note we continue our investigation of geodesics in the space of convex and plurisubharmonic functions. We show optimal regularity for geodesics joining two smooth strictly convex functions. We also investigate the regularity theory in the $C^{1,\alpha}$ realm. Finally we discuss the regularity of geodesics joining two toric strictly plurisubharmonic functions. 
	\end{abstract}
	
	\maketitle

	\section*{Introduction} This is a follow-up note of our previous manuscript \cite{AD21}. Recall that given a smoothly bounded strictly convex domain $U\subset\Rn$ we consider the set $\mathcal S$ of smooth strictly convex (up to the boundary) functions defined by
	$$\mathcal{S}:=\{u\in C^{\infty}(\overline{U})| D^2u>0\ {\rm on}\ \overline{U} , u=0\; {\rm on}\ \partial U\}.$$
	
	Next, we think of $\mathcal S$ as an infinite dimensional Riemannian manifold with tangent space at each $u\in\mathcal S$ identified with $C^{\infty}(\bar{U},\mathbb R)$ vanishing on the boundary and metric given by
	$$\langle f_1,f_2\rangle_u:=\int_{U}f_1f_2det(D^2u)$$
	for any $u\in\mathcal{S}\ ,f_1,\ f_2 \in T_{u}\mathcal{S}.$
	
	Then, as simple computation shows (see \cite{AD21}) the {\it geodesic equation} for the geodesic $u:[0,1]\longmapsto \mathcal S$ joining $u_0,u_1\in\mathcal{S}$ reads
	
	\begin{equation}\label{ss}
		\begin{cases}
			u\in C(\overline{U\times(0,1)})\ {\rm and\ convex\ in}\ U\times(0,1);\\
			\det(D^2 _{x,t}u)=0\;{\rm in}\ \;\; U\times (0,1);\\
			u=u_0\;\; {\rm in}\ \;\; U\times \{0\};\\
			u=u_1\;\; {\rm in}\ \;\; U\times \{1\};\\
			u=0\;\; {\rm in}\ \;\; \partial U\times (0,1),
		\end{cases}
	\end{equation}
	where $D^2_{x,t}u$ is the Hessian of $u$, thought as a function on the product $U\times (0,1)$ with respect to both variables $(x,t)$. Such a construction is the real analogue of Mabuchi's space of K\"ahler metrics on a compact K\"ahler manifold (see \cite{Mab87}). The link with the Monge-Amp\`ere equation in the K\"ahler case was observed by Semmes \cite{Sem92} and Donaldson \cite{D99}.
	
	The equation (\ref{ss}) is a special case of a Dirichlet problem for the homogeneous Monge-Amp\`ere equation for which a vast literature exists. In particular in \cite{CNS86} it was shown that smooth data and smooth boundary values yield $C^{1,1}$- regular solutions which is optimal in general. We refer also to \cite{LW15}, where the homogeneous problem in an unbounded strip domain is studied.
	
	The main point in (\ref{ss}) is that the problem is posed over a domain {\it with corners} and merely Lipschitz boundary regularity is available. Hence the theory from  \cite{CNS86} does not apply. On the bright side the boundary data is of very special shape.
	
	The main result of \cite{AD21} can be restated as follows:
	
	\begin{thm}
Let the convex function $u$ solve the Dirichlet problem (\ref{ss}) as above. Then u is globally Lipschitz and $C_{loc}^{1,1}$ regular. A blow-up of the $C^{1,1}$ norm is only possible at the corners $\partial U\times\lbrace0,1\rbrace$. Furthermore the solution is $C^{\infty}$ smooth if and only if the gradient images of $u_0$ and $u_1$ agree i.e.

$$\partial u_0(U)=\partial u_1(U).$$ 
	\end{thm}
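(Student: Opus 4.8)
The plan is to exploit the classical fact that the \emph{partial Legendre transform in the space variable} turns the degenerate equation $\det D^2_{x,t}u=0$ into a linear one. For a smooth uniformly convex competitor put $\hat u(p,t):=\sup_{x\in\overline U}\bigl(\langle x,p\rangle-u(x,t)\bigr)$; expanding $\det D^2_{x,t}u$ by a Schur complement along the last row and column gives $\det D^2_{x,t}u=-\det(D^2_{xx}u)\,\partial_{tt}\hat u$, so a solution of (\ref{ss}) must satisfy $\partial_{tt}\hat u\equiv 0$, i.e. $\hat u(p,t)=(1-t)u_0^{\ast}(p)+tu_1^{\ast}(p)$, and hence
\[
u(x,t)=\bigl((1-t)u_0^{\ast}+tu_1^{\ast}\bigr)^{\ast}(x).
\]
To legitimise this at the regularity actually available I would (i) invoke the comparison principle for the Monge--Amp\`ere operator on the bounded convex domain $U\times(0,1)$ to get uniqueness of the convex Aleksandrov solution of (\ref{ss}); and (ii) verify that the function displayed above is that solution. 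It is convex; it recovers $u_0,u_1$ at $t=0,1$ since the $u_i$, extended by $+\infty$ off $\overline U$, are convex and lower semicontinuous so $u_i^{\ast\ast}=u_i$; it vanishes on $\partial U\times(0,1)$ because $u_i^{\ast}\ge h_U$ forces $u\le h_U^{\ast}=\chi_{\overline U}$, while for $x\in\partial U$ testing against $p=\lambda\nu(x)$ (with $\nu(x)$ the outer normal and $\lambda$ large, so $v_t(p):=(1-t)u_0^\ast(p)+tu_1^\ast(p)=h_U(p)=\langle x,p\rangle$) gives $u(x,t)\ge 0$; and $\MA(u)=0$ because at a point of differentiability $\nabla_{x,t}u=(p,u_0^{\ast}(p)-u_1^{\ast}(p))$ with $p=\nabla_x u$, so the gradient image lies on a Lipschitz graph over $\R^n$, Lebesgue-null in $\R^{n+1}$.

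Everything else becomes an assertion about the convex function $v_t:=(1-t)u_0^{\ast}+tu_1^{\ast}$ and its Legendre dual. I would first record the regularity of $u_i^{\ast}$: it is globally $C^{1,1}$; it equals the support function $h_U$ off the open convex neighbourhood $\partial u_i(U)$ of $0$; on $\partial u_i(U)$ it is $C^\infty$ and uniformly strongly convex, $D^2u_i^{\ast}=(D^2u_i)^{-1}\circ(\nabla u_i)^{-1}$; and its only loss of $C^2$ is a Hessian jump across the ``rim'' $\partial(\partial u_i(U))$. Hence $v_t$ is globally $C^{1,1}$, with $D^2v_t\ge\min(t,1-t)\,c\,I$ on $\partial u_0(U)\cup\partial u_1(U)$ for a fixed $c>0$, and with only the radial null direction of $h_U$ surviving on the complement. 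Dualizing: for $x$ interior the contact set of $v_t$ over $x$ cannot meet the region $\{v_t=h_U\}$ (that would force $x\in\partial U$), so it sits where $v_t$ is $C^{1,1}$ and a.e. strongly convex, whence $u=v_t^{\ast}$ is $C^{1,1}$ near every interior point, and $C^\infty$ there unless the contact point lies on a rim. The $t$-dependence is uniform for $t\in[\delta,1-\delta]$, which gives $C^{1,1}$ bounds up to $U\times\{0,1\}$ and $\partial U\times(0,1)$; it is only where the degenerate direction of $h_U$ persists \emph{and} $t\to 0$ or $1$ that $(D^2v_t)^{-1}$, hence $D^2u$, can blow up, and since $\nabla_p v_t$ collapses that part of $\R^n$ onto $\partial U$ in the limit, the blow-up is confined to the corners $\partial U\times\{0,1\}$. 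The global Lipschitz bound drops out similarly, $\partial_t u=u_0^{\ast}-u_1^{\ast}$ being evaluated at $\nabla_x u$, which ranges in the bounded set $\bigcup_t\partial v_t(\overline U)$.

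For the dichotomy the condition $u_i|_{\partial U}=0$ is the decisive extra input: it forces $\nabla u_i(x)=\rho_i(x)\nu(x)$, a positive multiple of the outer normal, for $x\in\partial U$, so the rim $\partial(\partial u_i(U))=\{\rho_i(x)\nu(x):x\in\partial U\}$ is the radial graph of $\rho_i$ over the Gauss sphere; thus $\partial u_0(U)=\partial u_1(U)$ holds exactly when $\rho_0\equiv\rho_1$, i.e. when $\nabla u_0=\nabla u_1$ on $\partial U$. If they disagree, pick $x_0$ with $\rho_0(x_0)<\rho_1(x_0)$; then $p_0:=\rho_0(x_0)\nu(x_0)$ lies on the rim of $\partial u_0(U)$ but in the interior of $\partial u_1(U)$, so $\nabla_p v_t(p_0)=(1-t)x_0+t(\nabla u_1)^{-1}(p_0)$ is an interior point of $U$ for every $t\in(0,1)$: a genuine interior rim image, across which $u$ fails to be $C^2$. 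Conversely, if $\partial u_0(U)=\partial u_1(U)=:W$, then $v_t$ is $C^\infty$ and uniformly strongly convex on $W$; since $v_t=h_U$ maps $\R^n\setminus W$ into $\partial U$, convex duality gives $U\subseteq\nabla v_t(\R^n)=\nabla_p v_t(W)\cup\partial U$, so $\nabla_p v_t$ restricts to a diffeomorphism of $W$ onto $U$ and $u(\cdot,t)=v_t^{\ast}$ is $C^\infty$ and uniformly strongly convex on $U$, joint $C^\infty$-regularity in $(x,t)$ following from the affine $t$-dependence of $v_t$ and smoothness of the Legendre involution. I expect the real difficulty to sit in the two boundary analyses: the precise $C^{1,1}$-but-not-$C^2$ behaviour of $u_i^{\ast}$ across the rim $\partial(\partial u_i(U))$ (matching of one-sided Hessians), and the sharp localization showing $D^2u$ stays bounded up to each open face of $\partial(U\times(0,1))$ while genuinely degenerating over $\partial U\times\{0,1\}$ when $\partial u_0(U)\neq\partial u_1(U)$; both reduce to careful estimates on the explicit formula near that rim.
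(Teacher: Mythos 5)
Your route---working on the dual side via the partial Legendre transform, i.e.\ deriving and then exploiting the identity $u^*(\cdot,t)=(1-t)u_0^*+tu_1^*$---is viable and genuinely different in execution from the paper's argument, which stays on the primal side: there one shows (Lemmas \ref{lem1} and \ref{AbjDin}) that through every interior point there is a segment joining $\overline U\times\{0\}$ to $\overline U\times\{1\}$ along which $u$ is affine, differentiates $Du_0(\xi)=Du_1(\eta)$ and $(1-t)\xi+t\eta=x$ to obtain the explicit Hessian (Lemma \ref{thirdlemma}, Lemma \ref{xt}, Corollary \ref{tt}), bounds it by the AM--HM matrix inequality (Proposition \ref{AMHM}), and localizes away from the corners by the elementary convexity/similar-triangle argument of Section 4 (following \cite{AD21}). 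Your dual formula is exactly Proposition \ref{Ras}, so the starting point is shared; but two of your steps have genuine gaps. First, the verification that your explicit function is the solution of (\ref{ss}) is incomplete: you argue $\det D^2_{x,t}u=0$ because the gradient image at points of differentiability lies on a Lipschitz graph, but the Monge--Amp\`ere measure is the Lebesgue measure of the full \emph{subgradient} image, and at a kink the subdifferential can have positive measure even though the a.e.\ gradient image is null (already $u=|x|$ on an interval shows this). The repair needs an extra observation---on each contact set of $v_t=(1-t)u_0^*+tu_1^*$ both $u_0^*$ and $u_1^*$ are affine, hence $u_0^*-u_1^*$ is affine there and the subdifferential of $u$ stays inside an $n$-dimensional affine graph---or one should simply quote Proposition \ref{Ras} together with uniqueness, as the paper does. (Continuity of the candidate up to $\partial(U\times(0,1))$, needed before invoking uniqueness, is asserted rather than checked, and the claimed convexity of $\partial u_i(U)$ is unjustified: these images are star-shaped with respect to the origin, not convex in general.)

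Second, the assertions that carry most of the content---the locally uniform $C^{1,1}$ bound up to the open faces $U\times\{0,1\}$ and $\partial U\times(0,1)$, i.e.\ blow-up only at the corners, together with the Hessian control when the contact point lies on or near the rim $\partial\big(\partial u_i(U)\big)$---are only sketched and are explicitly deferred (``I expect the real difficulty to sit in the two boundary analyses''). The mechanism you name is the correct one: for $(x,t)$ away from the corners the contact point lands where $D^2v_t$ has a positive lower bound, of order $\min(t,1-t)$ in general and of order $1$ inside $\overline{\partial u_0(U)}$ when $x$ is compactly contained in $U$ and $t$ is small. But converting this into a two-sided bound on $D^2u$ needs care you have not supplied: since $\partial u_i(U)$ need not be convex, a segment between two contact points may leave the strong-convexity region, so one must use, e.g., co-coercivity of $\nabla u_i^*$ (equivalently the Lipschitz bound on $\nabla u_i$ over $\overline U$) rather than integrating $D^2v_t$ along segments, and one must handle contact points on the rim, where only one-sided Hessian limits exist. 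Your ``only if'' argument (a rim point of $\partial u_0(U)$ interior to $\partial u_1(U)$ is sent by $\nabla v_t$ to an interior point of $U$ across which $D^2u$ jumps) is essentially right---it implicitly uses the star-shapedness of $\partial u_1(U)$ and invertibility of the one-sided Hessians, both available here---but as a whole the proposal remains a plausible programme whose decisive estimates are left undone, and these are precisely the estimates the paper's segment-foliation/AM--HM computation and its Section 4 localization are designed to deliver.
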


In the current note we investigate what happens if we modify somewhat the above setting. First of all in the case of the space $\mathcal S$ the assumed strict convexity is stronger than the usual one. Typically one works with the local strict convexity i.e. with the space
\begin{equation}\label{newS}
\tilde{\mathcal{S}}:=\{u\in C^{\infty}({U})\cap C(\overline{U})| D^2u>0\ {\rm on}\ {U}, \ ||u||_{C^2}<\infty, u=0\; {\rm on}\ \partial U\}.	
\end{equation}	
The reason we have worked with $\mathcal S$ in \cite{AD21} is that various implicit function arguments were applied and, unless we control the constants involved, all estimates seemed to break close to the boundary. This is also the reason for imposing finite $C^2$ norm in the current note. Thus, roughly speaking, an element in $\tilde{\mathcal{S}}$ can have minimal eigenvalue tending to zero as the base point goes to the boundary, but the largest eigenvalue is bounded from above.

Note however that the construction described above can also be repeated for $\tilde{\mathcal{S}}$ with the same geodesic equation - the only necessary change is that now  the tangent space $T_{u}\tilde{\mathcal{S}}$ should be the space of {\it test functions} $C^{\infty}_{0}({U},\mathbb R)$. Hence a question appears\footnote{Both authors thank Eleonora Di Nezza for discussions on the topic.} whether the findings from \cite{AD21} remain true in the space $\tilde{\mathcal{S}}$. Our first observation is that this is the case.

\begin{thm}\label{main}
	Let the convex function $u$ solve the Dirichlet problem (\ref{ss}) as above with $u_0,u_1\in \tilde{\mathcal S}$. Then $u$ is globally Lipschitz and $C^{1,1}_{loc}$ regular. It is globally $C^{1,1}$ smooth in $U\times(0,1)$ if and only if the gradient images of $u_0$ and $u_1$ agree.
\end{thm}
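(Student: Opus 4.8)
The plan is to reduce Theorem \ref{main} to the already-proved Theorem 0.1 by an approximation argument. The point is that an element $u_0 \in \tilde{\mathcal S}$ differs from an element of $\mathcal S$ only near $\partial U$, where its smallest Hessian eigenvalue may degenerate, while its $C^2$ norm stays bounded. First I would fix a defining function $\rho$ for $U$ (smooth, strictly convex, $\rho < 0$ in $U$, $\rho = 0$ on $\partial U$, $D^2\rho > 0$) and, for $\e > 0$, set $u_i^\e := (1-\e) u_i + \e \rho$ for $i = 0,1$. Each $u_i^\e$ lies in $\mathcal S$: it is smooth up to $\overline U$, vanishes on $\partial U$, and has $D^2 u_i^\e = (1-\e) D^2 u_i + \e D^2\rho \geq \e D^2\rho > 0$ on $\overline U$. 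Moreover $u_i^\e \to u_i$ in $C^1(\overline U)$ (indeed in $C^{1,\beta}$ for every $\beta < 1$, and with uniformly bounded $C^2$ norm) as $\e \to 0$. Let $u^\e$ be the geodesic joining $u_0^\e$ and $u_1^\e$ given by Theorem 0.1, so each $u^\e$ is globally Lipschitz and $C^{1,1}_{loc}$.

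Next I would establish uniform (in $\e$) estimates and pass to the limit. The global Lipschitz bound for $u^\e$ follows from a barrier/comparison argument exactly as in \cite{AD21}: the modulus depends only on $\|u_i^\e\|_{C^1(\overline U)}$ and the geometry of $U$, both of which are controlled uniformly in $\e$. Similarly, on any compact $K \Subset U\times(0,1)$, the interior $C^{1,1}$ estimate from \cite{CNS86} (applicable since $u^\e$ solves the homogeneous Monge–Amp\`ere equation and is smooth up to the relevant part of the boundary after the $\e$-regularization, or via the interior estimates for convex solutions of $\det D^2 = 0$) gives a bound $\|u^\e\|_{C^{1,1}(K)} \leq C(K)$ independent of $\e$. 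By Arzel\`a–Ascoli a subsequence $u^{\e_j}$ converges in $C^1_{loc}(U\times(0,1)) \cap C^0(\overline{U\times(0,1)})$ to a convex function $v$ which is $C^{1,1}_{loc}$, globally Lipschitz, solves $\det D^2_{x,t} v = 0$ in $U\times(0,1)$ (the Monge–Amp\`ere operator is continuous under this kind of convergence), and has the correct boundary values $v = u_0$ on $\{0\}$, $v = u_1$ on $\{1\}$, $v = 0$ on $\partial U\times(0,1)$. By uniqueness of the solution to the Dirichlet problem \eqref{ss} (convex solutions of the homogeneous Monge–Amp\`ere equation with given continuous boundary data are unique by comparison), $v = u$, giving the first assertion.

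For the equivalence in the global $C^{1,1}$ statement, I would adapt the corresponding argument of \cite{AD21}. If $\partial u_0(U) = \partial u_1(U) =: P$, then the "obvious" competitor built from the fiberwise sup-convolution/Legendre transform — i.e. solving $\det D^2_{x,t} u = 0$ by the construction that interpolates the Legendre transforms of $u_0$ and $u_1$ — produces a genuinely $C^{1,1}$ solution on $U\times(0,1)$, because the common gradient image $P$ removes the obstruction that otherwise forces a gradient jump along $U\times\{0,1\}$; one then invokes uniqueness again to conclude this is $u$. Conversely, if the gradient images differ, there is a point where the subgradient of $u$ in the $t$-direction jumps as one approaches $t = 0$ or $t = 1$, precluding a global bound on $D^2 u$; this is exactly the mechanism already isolated in \cite{AD21}, and the same computation (tracking the $t$-derivative $\partial_t u$ at $t \to 0^+$ versus the gradient image of $u_0$) applies verbatim in the $\tilde{\mathcal S}$ setting since it only uses $C^1$ information on the endpoints.

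The main obstacle I anticipate is the uniformity of the interior $C^{1,1}$ estimate as $\e \to 0$: the bounds from \cite{CNS86} a priori depend on a positive lower bound for the Hessian of the boundary data, which degenerates. To handle this I would either (i) use that the relevant geodesic $u^\e$ is, on compact subsets of $U\times(0,1)$, bounded away from the degenerate part of $\partial U$, so only the $C^2$ norm (uniformly bounded) and the distance to $\partial(U\times(0,1))$ enter the interior estimate; or (ii) localize: on a slightly smaller strictly convex domain $U' \Subset U$, the restrictions $u_i|_{U'}$ lie in $\mathcal S(U')$ with uniform constants, and the interior estimate on $U'\times(0,1)$ is $\e$-free. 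Option (ii) is cleanest and sidesteps the degeneration entirely. A secondary point to check carefully is that the limit $v$ really attains the boundary values continuously up to $\partial U\times(0,1)$ and the corners; this follows from the uniform Lipschitz bound together with the explicit convexity barriers, as in \cite{AD21}.
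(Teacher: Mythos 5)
Your approximation scheme has a genuine gap precisely at the point that is the actual content of the theorem. For the ``if'' direction of the global $C^{1,1}$ statement you assert that, because $\partial u_0(U)=\partial u_1(U)$, the Legendre-transform construction ``produces a genuinely $C^{1,1}$ solution'' -- but this is the claim to be proved, not a known fact: when $u_0,u_1$ are only in $\tilde{\mathcal S}$ the inverse Hessians $(D^2u_i)^{-1}$ blow up at $\partial U$, and one must show that the harmonic-mean expression $\big[(1-t)(D^2u_0(\xi))^{-1}+t(D^2u_1(\eta))^{-1}\big]^{-1}$ (and its analogues in the $xt$ and $tt$ directions) stays bounded; the paper does this by first justifying these Hessian formulas along the affine segments joining $(\xi,0)$ to $(\eta,1)$ and then invoking the AM--HM matrix inequality to dominate them by $(1-t)D^2u_0(\xi)+tD^2u_1(\eta)$, which the finite $C^2$ norm controls. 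Your regularization does not rescue this: for $u_i^\e=(1-\e)u_i+\e\rho$ the gradient images $\partial u_0^\e(U)$ and $\partial u_1^\e(U)$ need not coincide even when $\partial u_0(U)=\partial u_1(U)$, so the global regularity criterion of \cite{AD21} need not apply to $u^\e$ at all, and even where it applies its global bounds degenerate as the smallest Hessian eigenvalue of the data goes to $0$ -- uniformity in $\e$ is exactly the issue, and nothing in the proposal supplies it.

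The local part is also not secured as written. There is no purely interior $C^{1,1}$ estimate for convex solutions of $\det D^2u=0$ independent of the boundary data (the Lipschitz, non-$C^1$ function $u=|x_1|$ is an Aleksandrov solution), and \cite{CNS86} is explicitly inapplicable here because $U\times(0,1)$ has corners -- this is the raison d'\^etre of the paper. Your localization fix (ii) fails for a structural reason: the geodesic for the data $u_i|_{U'}$ on a smaller domain $U'\Subset U$ is a different Dirichlet problem, not the restriction of $u$, and the restriction of $u$ to $U'\times(0,1)$ carries merely Lipschitz lateral boundary values of no special form. What would make the approximation run for the local statement is an estimate away from the corners whose constant depends only on the $C^{1,1}$ norms of the data and the distance to $\partial U\times\{0,1\}$; this is essentially what the paper proves directly (Section 4 with $\alpha=1$, via the supporting-segment geometry and the De Philippis--Figalli criterion), and once one has it the detour through $u^\e$ is unnecessary. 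The necessity direction (citing the counterexample mechanism of \cite{AD21} when the gradient images differ) is fine and matches the paper.
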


A second issue we deal in this note is the optimal regularity for the solution of the geodesic equation when {\it less} smoothness is initially assumed. To this end we need some definitions.

For any $y\in U$ let $l_y(x)$ denotes an affine support function of $u$ at $x$. i.e. $u(x)\geq l_y(x)$ throughout $U$ with an equality at $y$. If $u$ is differentiable at $y$ then obviously $l_y(x)$ is unique and is given by $l_y(x):=u(y)+Du(y)\cdot (x-y)$.

\begin{defi}
Let $U$ be a convex domain in $\Rn$ and $u$ be a convex function on $U$. $u$ is said to be strictly convex at $y\in U$ if for some affine support function $l_y$ we have $$\lbrace u(x)=l_y(x)\rbrace=\lbrace y\rbrace.$$ $u$ is said to be strictly convex if it is strictly convex at every point $y\in U$.	
\end{defi}

Below we describe the space of strictly convex $C^{1,\alpha}$ regular functions with Dirichlet boundary data.
\begin{defi}
Let $U$ be a convex domain in $\Rn$ and $\alpha\in (0,1]$ be a constant. Then the space  $\tilde{\mathcal S}^{1,\alpha}$ is given by
$$\tilde{\mathcal{S}}^{1,\alpha}:=\{u\in C^{1,\alpha}(U)\cap C(\overline{U})| u\ {\rm strictly\ convex\ on}\ {U} ,\  ||u||_{C^1}<\infty, u=0\; {\rm on}\ \partial U\}.$$	
\end{defi}

The $C^{1,\alpha}$ regularity for the degenerate Monge-Amp\`ere equation was also intensively studied - we refer to \cite{DPF15} and especially the recent paper \cite{CTW22} for an up-to-date results. Once again the geodesic equation is posed in a non-smooth domain and results from these papers cannot be used directly.

Our second result is that, thanks to the specific boundary data, the geodesic equation has the same $C^{1,\alpha}$ regularity theory as in the case of space $\tilde{\mathcal S}$.

\begin{thm}\label{thm2}
Let $u$ solve the Dirichlet problem (\ref{ss}) as above with $u_0,u_1\in  \tilde{\mathcal S}^{1,\alpha}$ for some fixed $\alpha\in (0,1]$. Then $u\in C^{1,\alpha}_{loc}(U\times(0,1))$.	A blow-up of the $C^{1,\alpha}$ norm is only possible at the corners $\partial U\times\lbrace0,1\rbrace$.
\end{thm}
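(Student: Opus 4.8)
\textbf{Proof plan for Theorem \ref{thm2}.}

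The strategy is to reduce the $C^{1,\alpha}$ statement to the already-established structure of the solution from Theorem \ref{main}, combined with an approximation argument and interior $C^{1,\alpha}$ estimates for the homogeneous Monge-Amp\`ere equation in cylindrical sub-domains. First I would record the basic a priori bounds: since $u_0,u_1$ are bounded convex functions vanishing on $\partial U$, the convex envelope construction (Perron solution) gives existence of a convex solution $u$ of \eqref{ss} which is globally Lipschitz on $\overline{U\times[0,1]}$ with Lipschitz constant controlled by $\|u_0\|_{C^1}+\|u_1\|_{C^1}$ and the geometry of $U$; this is the same comparison-with-barriers argument used in \cite{AD21} and carries over verbatim because it uses only $C^1$ data and the special Dirichlet boundary values. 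In particular $u(\cdot,t)$ is convex in $x$ for each $t$ and, by the homogeneous Monge-Amp\`ere equation, the graph of $u$ is ruled: through each interior point passes a line segment along which $u$ is affine, with endpoints on the two caps $U\times\{0\}$ and $U\times\{1\}$.

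The heart of the matter is then an \emph{interior} $C^{1,\alpha}$ estimate away from the corners. Fix a compact set $K\subset\subset U\times(0,1)$. I would cover a neighbourhood of $K$ by cylinders $V\times(a,b)$ with $\overline{V}\subset\subset U$ and $0<a<b<1$, and on each such cylinder compare $u$ with the solution of the homogeneous Monge-Amp\`ere equation having the \emph{same} (Lipschitz, convex) boundary data restricted to $\partial(V\times(a,b))$ --- by uniqueness in the Dirichlet problem for $\det D^2_{x,t}(\cdot)=0$ on the convex set $V\times(a,b)$ this is $u$ itself. Now the point is to propagate the $C^{1,\alpha}$ regularity of the initial data into the interior along the ruling segments. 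Because $u_0,u_1$ are \emph{strictly} convex in the sense of the Definition above, the family of affine support functions is "non-degenerate": the set-valued map $t\mapsto$ (the segment through a point at height $t$) is single-valued and continuous, and its endpoints in $U\times\{0,1\}$ move in a controlled way. Tracking the slope of $u$ along each segment and using that the endpoint data $Du_0, Du_1$ are $C^{0,\alpha}$, together with the Lipschitz-in-$t$ control of how the segments vary (which follows from global Lipschitz regularity of $u$ already established), yields that $D_{x,t}u$ is $C^{0,\alpha}_{loc}$ with exponent $\alpha$, with the constant depending on $\dist(K,\partial U\times\{0,1\})$, the $C^{1,\alpha}$ norms of $u_0,u_1$, and the modulus of strict convexity. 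This is essentially a quantitative version of the foliation argument from \cite{AD21}, upgrading "segments have endpoints on the caps" to "the map and the boundary slopes are H\"older, hence so is the interior gradient".

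The main obstacle, as expected, is the last step --- making the propagation of H\"older regularity along the foliation quantitative and uniform on $K$. There are two difficulties hidden there. First, a priori the ruling segment through an interior point need not be unique unless one knows $u(\cdot,t)$ is strictly convex, which one does \emph{not} know automatically in the interior even though $u_0,u_1$ are; one has to argue that strict convexity of the data plus the homogeneous equation force the interior sub-level structure to remain strictly convex away from the corners, or else work with the whole (closed, convex) set of segments and show the gradient is still well-defined by a squeezing argument. Second, one needs that the endpoints of the segments cannot "escape to the corner" for points staying in $K$ --- i.e. a definite separation $\dist(\partial u_0(U),\partial u_1(U))$-type quantity controls how close to $\partial U$ the endpoints can lie; this is where the finiteness of $\|u_i\|_{C^1}$ and the interior strict convexity are genuinely used, mirroring the role of $\partial u_0(U)=\partial u_1(U)$ in the smooth case. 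I would handle both by an approximation scheme: mollify $u_0,u_1$ to strictly convex smooth $u_0^\e,u_1^\e\in\tilde{\mathcal S}$ with $\|u_i^\e\|_{C^{1,\alpha}}$ bounded uniformly in $\e$ (possible by a standard mollification estimate since $\|u_i\|_{C^{1,\alpha}}<\infty$), apply Theorem \ref{main} and the interior foliation analysis to the solutions $u^\e$ to get $C^{1,\alpha}_{loc}$ bounds on $K$ \emph{independent of $\e$}, and then pass to the limit using uniqueness of the solution (comparison principle for the homogeneous Monge-Amp\`ere equation on $U\times(0,1)$) to conclude $u^\e\to u$ locally uniformly and $u\in C^{1,\alpha}_{loc}(U\times(0,1))$, with blow-up possible only at $\partial U\times\{0,1\}$ since that is the only place the uniform bounds were not available.
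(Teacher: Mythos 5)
Your proposal gets the structural inputs right (global Lipschitz bound, and the ruling segment through each interior point with endpoints on the two caps, i.e.\ Lemma \ref{AbjDin}), but the quantitative core is missing and the two mechanisms you offer for it do not work. First, ``propagating'' the H\"older continuity of $Du_0,Du_1$ along the foliation requires controlling how the endpoint maps $\xi(x,t),\eta(x,t)$ vary with $(x,t)$, and for merely $C^{1,\alpha}$ strictly convex data this variation is governed by the inverse of the gradient maps of $u_0,u_1$, which is only continuous in general --- the Hessian formulas of Lemma \ref{thirdlemma} and Lemma \ref{xt}, which is what really makes the smooth/$\tilde{\mathcal S}$ argument quantitative, are simply unavailable here. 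Second, the mollification scheme is circular: Theorem \ref{main} applied to $u_0^{\varepsilon},u_1^{\varepsilon}$ yields $C^{1,1}$ bounds whose constants depend on $\sup\|D^2u_i^{\varepsilon}\|$, which blows up as $\varepsilon\to 0$ when the data are only $C^{1,\alpha}$; a bound on $K$ \emph{independent of} $\varepsilon$ depending only on the $C^{1,\alpha}$ norms is exactly the estimate to be proved, and nothing in your scheme supplies it (there is also the unaddressed issue that mollification destroys the boundary condition $u_i=0$ on $\partial U$, so $u_i^{\varepsilon}\notin\tilde{\mathcal S}$ without further surgery). Note also that no condition of the type $\partial u_0(U)=\partial u_1(U)$ enters Theorem \ref{thm2}: it is a purely interior statement away from the corners, so the ``separation of gradient images'' you invoke to keep endpoints away from the corner is not the right tool.

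The paper's proof avoids all of this by never differentiating the foliation. After subtracting a support plane at $(\bar x,\bar t)\in V_2$, it reduces matters, via the pointwise criterion of Lemma \ref{Fig}, to the one-sided growth bound (\ref{2}), i.e.\ $u(x,t)\le C|(x-\bar x,t-\bar t)|^{1+\alpha}$ near $(\bar x,\bar t)$. This is proved by pure convexity: $u$ vanishes on the segment $[(x^0,0),(x^1,1)]$ through $(\bar x,\bar t)$, so following the ray from one endpoint through $(x,t)$ until it strikes a cap $\overline U\times\{0,1\}$ (or, in the third case, using linearity of the boundary values along $\partial U\times[0,1]$ to bounce to the caps) bounds $u(x,t)$ by the boundary datum's excess over its support plane at $x^0$ or $x^1$, which is $\le C|\hat x-x^1|^{1+\alpha}$ by the $C^{1,\alpha}$ assumption; an elementary sine-rule computation in the plane spanned by the three points, together with the choice of the neighborhoods $W_1\Subset W_2$ and the separation $\delta$, gives $|\hat x-x^1|\le C|(x,t)-(\bar x,\bar t)|$ uniformly on $V_2$. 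You should redirect your effort to proving an estimate of this pointwise type rather than trying to make the foliation map itself H\"older or to regularize the data.
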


In \cite{Abj19} and \cite{Ras17} the complex analogue of geodesics in domains was considered. There the weak geodesic has to solve a complex Monge-Amp\`ere equation on the product of the domain and a planar annulus $A$ (see the Preliminaries for the details). The analogue of the space $\mathcal S$ is then the space $\mathcal H$ of smooth strictly plurisubharmonic functions vanishing on the boundary. In this situation the problem is much harder and partial results were shown only for special domains (see \cite{Abj19} when the domain is a ball in complex space) or under symmetry assumptions of the boundary data.

In \cite{AD21} we considered the case when the boundary data consists of toric plurisubharmonic functions and the underlying domain is Reinhardt. Thanks to a well known identification (see Lemma \ref{toric} below) in this setting the problem is reduced to (\ref{ss}) on the logarithmic image of the Reinhardt domain. Hence we have shown the following result (see Corollary 0.2 in \cite{AD21}):
\begin{prop}\label{pq}
	Let $\Omega$ be a smoothly bounded strictly pseudoconvex Reinhardt domain in $\mathbb C^n$. Let also $M=\{z\in\mathbb C^n| z_1\cdots z_n=0\}$. Suppose that $\phi$ is a weak geodesic solving the problem (\ref{MAcomplex}). If $\varphi_0,\varphi_1\in\mathcal H$ are toric in the space variables i.e.
	for all $z,z'\in \Omega$ satisfying $|z|=|z'|$ one has $\varphi_j(z)=\varphi_j(z'),\ j=0,1$ then $\phi$ is $C^{1,1}$ away from the corner $\partial\Omega\times\partial A$ 
	and  $(\Omega\cap M)\times A$.
	\end{prop}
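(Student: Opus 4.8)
The plan is to pass to logarithmic coordinates, where a toric weak geodesic becomes a solution of the \emph{real} homogeneous Monge--Amp\`ere equation \eqref{ss}, invoke the regularity result of \cite{AD21} recalled above, and translate the conclusion back. \textbf{Reduction to an invariant solution.} Since $\varphi_0,\varphi_1$ are toric in the space variables and the Dirichlet data imposed on $\Omega\times\partial A$ is moreover invariant under rotations of the annular variable, the weak geodesic $\phi$ --- characterised either as the unique solution of the homogeneous complex Monge--Amp\`ere Dirichlet problem \eqref{MAcomplex}, or equivalently as the upper envelope of plurisubharmonic subsolutions --- is itself invariant under the $(S^1)^n$-action on the $z$-variables and the $S^1$-action on $\tau$; this follows by applying the comparison principle to $\phi$ and its rotates. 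Hence $\phi$ descends to a function $u(\xi,s)$ with $\xi=(\log|z_1|,\dots,\log|z_n|)\in\log\Omega$ and $s=\log|\tau|$ (suitably normalised to range in $(0,1)$), which takes prescribed convex boundary values $u_0,u_1$ at $s=0,1$ and vanishes on $\partial(\log\Omega)\times(0,1)$.

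\textbf{Translating the equation.} By the classical identification recalled in Lemma \ref{toric}, at every point of $(\Omega\setminus M)\times A$ the full complex Hessian of such an invariant $\phi$ is obtained from the real Hessian $D^2_{\xi,s}u$ by conjugation with the diagonal matrix $\tfrac12\mathrm{diag}(1/z_1,\dots,1/z_n,1/\tau)$; consequently $\phi$ is plurisubharmonic on $(\Omega\setminus M)\times A$ precisely when $u$ is convex on $\log\Omega\times(0,1)$, and
\begin{equation*}
\det\!\big(\text{complex Hessian of }\phi\big)\;=\;\frac{1}{4^{\,n+1}\,|z_1|^2\cdots|z_n|^2\,|\tau|^2}\;\det\!\big(D^2_{\xi,s}u\big),
\end{equation*}
so that the homogeneous complex Monge--Amp\`ere equation off $M$ is equivalent to $\det D^2_{\xi,s}u=0$. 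Since $\varphi_0,\varphi_1\in\mathcal H$, the profiles $u_0,u_1$ are smooth and strictly convex with bounded Hessian (because $|z_j|$ stays bounded on $\overline\Omega$) and vanish on the relevant part of $\partial(\log\Omega)$; moreover strict pseudoconvexity of $\partial\Omega$ translates into smoothness and strict convexity of the portion of $\partial(\log\Omega)$ coming from $\partial\Omega\setminus M$, while the points of $\overline\Omega\cap M$ recede to infinity in the $\xi$-picture. Thus $u$ solves, locally away from $M$, precisely the Dirichlet problem \eqref{ss}.

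\textbf{Conclusion.} Applying the result of \cite{AD21} recalled above (or, what suffices here, its interior and boundary estimates, which are local in nature) gives that $u$ is $C^{1,1}_{loc}$ on $\log\Omega\times(0,1)$ and $C^{1,1}$ up to the boundary away from the corners $\partial(\log\Omega)\times\{0,1\}$. Since $(z,\tau)\mapsto(\xi,s)$ is a smooth submersion on $(\Omega\setminus M)\times A$, these bounds pull back to $C^{1,1}$ bounds for $\phi$ on $(\Omega\setminus M)\times A$ away from the preimage of the corners, which is exactly $\partial\Omega\times\partial A$. This yields the claim.

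\textbf{Main obstacle.} Two points require care. First, the $S^1$-invariance of $\phi$ rests on the uniqueness/comparison principle for the homogeneous complex Monge--Amp\`ere problem on a product domain \emph{with corners}. Second, $\log\Omega$ is in general unbounded, precisely in the directions corresponding to $\overline\Omega\cap M$, so the global statement of \cite{AD21} does not apply verbatim: one must either localise its estimates to compact pieces of $\log\Omega\times(0,1)$ lying away from $M$, or exhaust $\Omega\setminus M$ by Reinhardt subdomains with bounded logarithmic image. This is exactly why the set $(\Omega\cap M)\times A$ has to be removed in the statement, and it is the step I expect to be the most delicate.
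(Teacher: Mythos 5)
Your argument is essentially the paper's own: Proposition \ref{pq} is obtained exactly by the toric/logarithmic identification (Lemma \ref{toric}, Proposition \ref{geodgeod}) reducing the complex problem \eqref{MAcomplex} to the real geodesic equation \eqref{ss} on the logarithmic image, invoking the local $C^{1,1}$ regularity of \cite{AD21} there, and pulling back away from $(\Omega\cap M)\times A$ and the corner where the logarithmic map degenerates. Your flagged caveats (torus-invariance of the weak solution and localizing the estimates because $\log\Omega$ is unbounded near $M$) are precisely the points the paper handles via Proposition \ref{geodgeod} and the local nature of the \cite{AD21} bounds, so the proposal is correct.
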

The problem with $(\Omega\cap M)\times A$ is that the logarithmic map obviously degenerates along the set $\Omega\cap M$ (unless it is empty). Nevertheless in \cite{AD21} (see Remark 2.4 there) we conjectured that $C^{1,1}$ regularity extends past $(\Omega\cap M)\times A$. Our final result confirms this:
\begin{thm}\label{tc}
If $\Omega,\ \varphi_0,\ \varphi_1$ are as above, then the geodesic $\phi$ is $C^{1,1}$ regular in the $\Omega$ direction past the set $(\Omega\cap M)\times A$. If $\Omega$ is additionally complete Reinhardt then $\phi$ is $C^{1,1}$ in all directions.
\end{thm}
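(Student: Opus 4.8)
The plan is to deduce the result from the toric reduction and the already-established real case (Theorem \ref{main}/Proposition \ref{pq}), upgrading the regularity across the degeneracy locus $(\Omega\cap M)\times A$ by a separate local argument. First I would recall the setup behind Lemma \ref{toric}: writing $z_j = e^{(x_j+i\theta_j)/2}$ (or a similar normalization), a toric psh function $\varphi(z)$ on the Reinhardt domain $\Omega$ corresponds to a convex function $u(x)$ on the logarithmic image $L(\Omega)\subset\Rn$, and a weak geodesic $\phi$ that is toric in the space variables corresponds to a solution $u(x,s)$ of the real geodesic problem (\ref{ss}) on $L(\Omega)\times(0,1)$ (after identifying the annulus parameter). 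By Proposition \ref{pq} we already know $\phi$ is $C^{1,1}$ on $\Omega\setminus M$ times $A$, away from the outer corner. So the only new content is regularity in the $\Omega$-direction at points of $\Omega\cap M$, where $L$ degenerates: a bounded second derivative in the $x$-variables does not by itself control second derivatives in $z$ near $z_j=0$.

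The key local step: fix a point $p\in(\Omega\cap M)\times A$, say with $p$ lying on the coordinate hyperplane $\{z_1=0\}$ only (the general case is the same with more indices). On a small polydisc neighborhood $V$ of $p$, the function $\phi$ is psh, bounded, and — crucially — invariant under the torus action $z_j\mapsto e^{i\beta_j}z_j$. I would exploit the fact that a torus-invariant psh function on a polydisc that is $C^{1,1}$ away from the union of coordinate hyperplanes, with a uniform bound on its Monge--Amp\`ere-type Hessian there, automatically extends $C^{1,1}$ across those hyperplanes in the holomorphic coordinates. Concretely: write $\phi$ as a function of $|z_1|^2=:r_1$ (and the other $|z_j|^2$, and the annulus variable). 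The map $r_1\mapsto x_1=\log r_1$ is the source of the degeneration; invariance means $\partial_{z_1}\phi = \bar z_1\,\psi$ and $\partial_{z_1\bar z_1}\phi = \psi + |z_1|^2\,\partial_{r_1}\psi$ for the ``radial derivative'' $\psi=\partial_{r_1}\phi$. One then checks that the $C^{1,1}$ bound on $u(x,s)$ in the $x_1$ variable, i.e. on $\partial_{x_1}u$ and $\partial_{x_1 x_1}u = r_1\partial_{r_1}(r_1\partial_{r_1}\phi)$, forces $\psi$ and $r_1\partial_{r_1}\psi$ to be bounded, and moreover $\psi$ extends continuously (in fact Lipschitz) to $r_1=0$; this is exactly what is needed for $\phi$ to be $C^{1,1}$ in the $z_1,\bar z_1$ directions at $p$. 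The mixed derivatives $\partial_{z_1\bar z_j}\phi$ for $j\neq 1$ are handled the same way since they equal $\bar z_1 z_j$ times a bounded quantity coming from $\partial_{x_1 x_j}u$. This yields $C^{1,1}$ regularity of $\phi$ in the $\Omega$-directions on all of $\Omega\times A$ except the outer corner $\partial\Omega\times\partial A$.

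For the second assertion, suppose $\Omega$ is complete Reinhardt. Then $L(\Omega)$ is unbounded (a translate of a ``lower set''), but more importantly the gradient images $\partial u_0(L(\Omega))$ and $\partial u_1(L(\Omega))$ coincide: completeness forces both to be the same region (roughly, each projects onto the same recession-type cone), so the smoothness criterion of Theorem \ref{main} applies and $u(x,s)$ is globally $C^{1,1}$ on $L(\Omega)\times(0,1)$, including up to the spatial boundary. Translating back via the (now non-degenerate-where-it-matters) log map and combining with the toric extension argument above across $M$, one gets that $\phi$ is $C^{1,1}$ in all directions, including the $t$-direction on the annulus, because there is no outer corner obstruction once the boundary data match. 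I expect the main obstacle to be the local extension lemma across the coordinate hyperplanes: one must verify carefully that the one-sided control of $\partial_{x_1x_1}u$ near $x_1\to-\infty$ (equivalently $r_1\to 0$) really does translate into a two-sided $C^{1,1}$ bound in the holomorphic variables, and that no tangential/mixed second derivative blows up — this is where the precise structure of torus-invariant functions and the bound $\|u\|_{C^2}<\infty$ (inherited from $\tilde{\mathcal S}$, hence from Theorem \ref{main}) are used in an essential way.
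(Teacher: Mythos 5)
Your overall skeleton (reduce via the toric correspondence, use Proposition \ref{pq} away from $M$, and treat a neighbourhood of $\Omega\cap M$ separately) is the same as the paper's, but the key local step you propose across $\{z_1=0\}$ does not work as stated. With $x_1=\log|z_1|$, $r_1=|z_1|^2$ and $\psi=\partial_{r_1}\phi$, one has $\partial_{x_1}u=2r_1\psi$ and $\partial^2_{x_1x_1}u=4r_1\partial_{r_1}(r_1\partial_{r_1}\phi)$, while $\partial^2_{z_1\bar z_1}\phi=\frac{1}{4r_1}\partial^2_{x_1x_1}u=\psi+r_1\partial_{r_1}\psi$. So a uniform $C^{1,1}$ bound on $u$ in the logarithmic variables only gives $r_1\psi+r_1^2\partial_{r_1}\psi=O(1)$, which is one full factor of $r_1$ short of what you need; your assertion that the $C^{1,1}$ bound on $u$ ``forces $\psi$ and $r_1\partial_{r_1}\psi$ to be bounded'' is false in general. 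A concrete counterexample to that implication is $\phi(z)=|z|$ (toric, psh), for which $u(x)=e^{x}$ has all derivatives bounded on $\{x<0\}$ while $\partial^2_{z\bar z}\phi=\frac1{4|z|}$ blows up and $\phi$ is not even $C^1$ at $0$. The missing input is the decay $\partial^2_{x_ix_j}u=O(e^{x_i+x_j})$ near $M$, and this is exactly what the paper extracts from the structure of the geodesic rather than from its mere $C^{1,1}$ bound: by Lemma \ref{thirdlemma}, $D^2_{xx}u$ is the harmonic mean $\big((1-t)(D^2u_0(\xi))^{-1}+t(D^2u_1(\eta))^{-1}\big)^{-1}$, the endpoint Hessians are rewritten via (\ref{torconv}) as complex Hessians of $\varphi_0,\varphi_1$ carrying weights $e^{\xi_k+\xi_l}$, $e^{\eta_k+\eta_l}$, and the \emph{weighted} AM--HM inequality (Proposition \ref{AMHMweight}) together with the harmonic--geometric mean comparison $\big(\frac{1-t}{e^{\xi_i}}+\frac{t}{e^{\eta_i}}\big)^{-1}\le e^{(1-t)\xi_i+t\eta_i}$ produces precisely the weight $e^{x_i+x_j}$ cancelling the factor $\frac{1}{4z_i\bar z_j}$. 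Your outline contains no substitute for this weighted matrix inequality, which is the new ingredient of the proof.

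The second assertion also rests on a false claim. Completeness of $\Omega$ does \emph{not} force $\partial u_0(U)=\partial u_1(U)$: on the unit disc (complete Reinhardt) take $\varphi_0=2(|z|^2-1)$ and $\varphi_1=|z|^2-1$, both in $\tilde{\mathcal H}$; then $u_0'(x)=4e^{2x}$ and $u_1'(x)=2e^{2x}$ have images $(0,4)$ and $(0,2)$. (Note also that the statement to be proved is local -- blow-up at the corner $\partial\Omega\times\partial A$ is still permitted -- so no matching of gradient images is assumed or needed.) What completeness actually buys in the paper is control of the time and mixed second derivatives near $M$: by Lemma \ref{xt} and Corollary \ref{tt} these involve the vector $\xi-\eta$, which can genuinely be unbounded near $M$ (Example \ref{3rdexa}). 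Using that the logarithmic image $U$ of a complete Reinhardt domain is stable under translations by negative amounts in each coordinate, the identity $D_{x_k}u_0(\xi)=D_{x_k}u_1(\eta)$ is written as an equality of integrals $\int_{-\infty}^0D^2_{x_kx_k}u_j$ along such rays; converting the integrands via (\ref{torconv}) into complex Hessians of $\varphi_0,\varphi_1$ weighted by $e^{2(\cdot+s)}$ and using two-sided bounds for these Hessians on compact subsets of $\Omega$ yields $C^{-2}\le e^{\xi_k-\eta_k}\le C^{2}$, i.e. $\xi-\eta$ is bounded, which then closes the estimate in the time and mixed directions. Without an argument of this type, your proof of the complete Reinhardt case is unsupported.
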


The note is organized as follows. In Section 1 we gather some important facts that shall be used later on. Next, in Section 2 we provide some examples. Section 3 is devoted to the proof of Theorem \ref{main}. In Section 4 we deal with Theorem \ref{thm2}. Finally in Section 5 we handle Theorem \ref{tc}.

{\bf Acknowledgements}. The second named author has been partially supported  by grant no. 2021/41/B/ST1/01632 from the National Science Center, Poland. The first author was partially funded by the German Research Foundation (DFG) under Germany’s Excellence Strategy EXC 2044-390685587 “Mathematics Münster: Dynamics-Geometry-Structure” and by the CRC 1442 “Geometry: Deformations and Rigidity” of the DFG.

\section{preliminaries}

We shall follow the notation used in \cite{AD21}. We also refer to \cite{Gu02} and \cite{Fi17} for background material regarding the Monge-Amp\`ere operator and its basic regularity theory.

{\bf Real geodesics.}

The existence and uniqueness of convex generalized solutions to (\ref{ss}) is standard -see \cite{Gu02,Fi17}. As the boundary data in all the considered settings is (more than) Lipschitz the global Lipschitz regularity of the solution follows from the standard Walsh technique - see \cite{Wal68}.

Recall that (see \cite{Fi17}) the Legendre transform of a continuous function $u: U\ni\mathbb R^n\longmapsto \mathbb R$ is given by
\begin{equation}\label{Leg}
	u^{*}(y)=\sup_{U}\{x.y-u(x)\}.
\end{equation}

Below we list several basic facts about this transformation:
\begin{prop}\label{propLag}
	Let $U\in\mathbb R^n$ be a convex set and $u: U\longmapsto \mathbb R$ be a convex function. Then
\begin{enumerate}
	\item $\{u^*<\infty\}=\partial u(U)$;
	\item if $u$ is smooth and strictly convex on $U$ then so is $u^*$ on $\partial u(U)$;
	\item $u^{**}|_{U}=u$;
	\item If  $u$ is $C^2$ and strictly convex, then
	$$D^2 u^{*}(y)=( D^2 u(x(y)))^{-1},$$
	where $D^2 u$ represents the Hessian matrix of $u$ and $(D^2 u)^{-1}$ its inverse. $x(y)$ is the unique point where the supremum in the definition of $u^*$ is achieved.
\end{enumerate}
\end{prop}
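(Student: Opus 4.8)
The plan is to treat the four items in the order (1), (3), and finally (2) and (4) together, since the smooth statements are cleanest once the duality between $u$ and $u^*$ is established, and (3) is what makes that duality symmetric.

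First, for item (1), I would exploit the elementary equivalence, valid for any convex $u$, that $y \in \partial u(x_0)$ for some $x_0 \in U$ if and only if the map $x \mapsto x \cdot y - u(x)$ attains its supremum at $x_0$. Indeed, the subgradient inequality $u(x) \geq u(x_0) + y \cdot (x - x_0)$ rearranges precisely into $x \cdot y - u(x) \leq x_0 \cdot y - u(x_0)$. Whenever this supremum is attained we get $u^*(y) = x_0 \cdot y - u(x_0) < \infty$, giving $\partial u(U) \subseteq \{u^* < \infty\}$. For the reverse inclusion I would argue that finiteness of $u^*(y)$ produces an affine minorant $x \cdot y - u^*(y)$ of $u$ with slope $y$, and that for the finite (hence interior-continuous) convex function $u$ this minorant touches the graph and is therefore a genuine support, placing $y \in \partial u(x_0)$ at the contact point.

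Next, for item (3) I would invoke Fenchel--Moreau biconjugation. The bound $u^{**} \leq u$ is immediate from Fenchel's inequality $u(x) + u^*(y) \geq x \cdot y$ upon writing out both suprema. For the reverse $u^{**} \geq u$ on $U$, I would use the existence of a supporting hyperplane at each interior point: given $x_0$, choose $y_0 \in \partial u(x_0)$ (nonempty in the interior), so that the Fenchel equality from (1) yields $u^{**}(x_0) \geq x_0 \cdot y_0 - u^*(y_0) = u(x_0)$. Hence $u^{**}|_U = u$.

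Finally, under the smoothness and strict convexity hypotheses of (2) and (4), the gradient map $Du : U \to \partial u(U)$ is a continuous injection onto $\partial u(U)$, and first-order optimality identifies the unique maximizer as $x(y) = (Du)^{-1}(y)$. Differentiating the closed-form expression $u^*(y) = x(y) \cdot y - u(x(y))$ and using $Du(x(y)) = y$ gives the fundamental identity $Du^*(y) = x(y)$. Differentiating the relation $Du(x(y)) = y$ once more yields $D^2 u(x(y)) \, Dx(y) = I$, so $Dx(y) = (D^2 u(x(y)))^{-1}$, whence $D^2 u^*(y) = (D^2 u(x(y)))^{-1}$, which is item (4); positive definiteness of this inverse gives strict convexity of $u^*$, and the inverse function theorem (applicable since $D^2 u > 0$) upgrades $Du^*$ to a smooth map, completing (2). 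The main obstacle I anticipate is the boundary bookkeeping in item (1): the equality $\{u^* < \infty\} = \partial u(U)$ can fail at the relative boundary of the effective domain, where the supremum defining $u^*$ need not be attained for a general convex function. I would handle this either by restricting to interior contact points and noting that the paper uses the identification essentially in the smooth strictly convex regime, where $Du$ is a homeomorphism onto its image and attainment is automatic, or by appealing directly to the standard convex-analytic fact that $\partial u(U)$ and $\operatorname{dom} u^*$ coincide for the functions at hand.
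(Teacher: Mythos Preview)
The paper does not actually prove this proposition: it is stated in the Preliminaries as a list of ``basic facts'' about the Legendre transform, with \cite{Fi17} given as the background reference, and no argument is supplied. Your proposal is the standard textbook derivation (subgradient characterization for (1), Fenchel--Moreau for (3), and differentiating the identity $Du(x(y))=y$ for (2) and (4)) and is correct; your caveat about boundary attainment in (1) is apt, and the paper indeed only ever uses these facts in the smooth strictly convex regime where the issue does not arise.
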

A crucial fact linking the Legendre transform and geodesics is the following nice formula whose proof (even in more general situations) can be found in \cite{Ra17}.
\begin{prop}\label{Ras}
Let $U\in\mathbb R^n$ be a convex set and $u_0,\ u_1$ be two strictly convex functions vanishing on $\partial U$. Then the geodesic $u$ joining them is given by the formula
\begin{equation}\label{fla}
	u^*(y,t)=(1-t)u_0^{*}(y)+tu_1^{*}(y),
\end{equation} 
where $u^*(y,t)$ is the partial Legendre transform done only with respect to the $y$-variable.
 	\end{prop}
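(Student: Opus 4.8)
The plan is to exploit the classical fact that the partial Legendre transform \emph{linearizes} the homogeneous Monge-Amp\`ere operator, turning the fully nonlinear equation $\det(D^2_{x,t}u)=0$ into the statement that $u^*$ is affine in $t$. Concretely, I would first suppose that $u$ is smooth and strictly convex in $(x,t)$, and for each $t$ let $x(y,t)$ be the point realizing the supremum in (\ref{Leg}), so that $y=D_x u(x(y,t),t)$. Differentiating this identity in $t$ gives $\partial_t x=-(D^2_x u)^{-1}D_x\partial_t u$, while the envelope identity yields $\partial_t u^*(y,t)=-\partial_t u(x(y,t),t)$. A further differentiation produces
\[
\partial_t^2 u^*=(D_x\partial_t u)^{T}(D^2_x u)^{-1}(D_x\partial_t u)-\partial_t^2 u.
\]
On the other hand, the Schur complement expansion of the full Hessian reads
\[
\det(D^2_{x,t}u)=\det(D^2_x u)\,\big(\partial_t^2 u-(D_x\partial_t u)^{T}(D^2_x u)^{-1}(D_x\partial_t u)\big),
\]
and since $D^2_x u>0$, the geodesic equation $\det(D^2_{x,t}u)=0$ is \emph{equivalent} to $\partial_t^2 u^*=0$. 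Thus $u^*(y,\cdot)$ is affine, and matching the endpoint data $u^*(\cdot,0)=u_0^*$, $u^*(\cdot,1)=u_1^*$ gives (\ref{fla}).

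To obtain the formula in the generality stated, where $u$ is only a convex generalized solution and need not be smooth, I would reverse the construction and appeal to uniqueness. Define
\[
w(x,t):=\sup_{y}\big\{x\cdot y-(1-t)u_0^*(y)-t\,u_1^*(y)\big\},
\]
i.e. the partial inverse Legendre transform, in the $y$ slot, of the affine interpolation $h(y,t):=(1-t)u_0^*(y)+t\,u_1^*(y)$. Since each function $(x,t)\mapsto x\cdot y-h(y,t)$ is affine in $(x,t)$, the supremum $w$ is automatically jointly convex. Its endpoint values are correct: by Proposition \ref{propLag}(3), $w(\cdot,0)=u_0^{**}=u_0$ and $w(\cdot,1)=u_1$. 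The zero boundary data on $\partial U\times(0,1)$ follows because $u_0$ and $u_1$ vanish on $\partial U$, which forces $u_0^*$ and $u_1^*$ to share the same recession behaviour (the support function of $U$); hence their interpolation does too, and one checks $w=0$ on $\partial U\times(0,1)$.

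The crucial point is that $w$ solves the homogeneous equation in the Alexandrov sense. Indeed, at any point where the supremum is attained at $y^*$, the envelope identity gives the subgradient
\[
\big(D_x w,\partial_t w\big)=\big(y^*,\,u_0^*(y^*)-u_1^*(y^*)\big),
\]
so the entire gradient image $\partial w\big(U\times(0,1)\big)$ is contained in the graph $\{(y,u_0^*(y)-u_1^*(y)):y\in\R^n\}$, an $n$-dimensional set in $\R^{n+1}$ of Lebesgue measure zero. Consequently the Monge-Amp\`ere measure of $w$ vanishes, i.e. $\det(D^2_{x,t}w)=0$ in the generalized sense. Since $w$ is then a convex generalized solution of the Dirichlet problem (\ref{ss}) with the same boundary data as $u$, the uniqueness of such solutions forces $w=u$; taking the partial Legendre transform back and using $h^{**}=h$ (Proposition \ref{propLag}(3)) yields $u^*(\cdot,t)=w^*(\cdot,t)=h(\cdot,t)$, which is exactly (\ref{fla}).

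The main obstacle is the non-smooth regime. The smooth computation is only heuristic because the geodesic is at best $C^{1,1}$, so the genuine content lies in the generalized-solution argument, where care is needed in two places: verifying the zero boundary values of $w$ on $\partial U\times(0,1)$ rigorously, relying on the common asymptotics of $u_0^*$ and $u_1^*$, and interpreting $u_0^*,u_1^*$ and their interpolation correctly when $\partial u_0(U)\neq\partial u_1(U)$, in which case the transforms are finite on different sets and must be extended by $+\infty$, so that $h(\cdot,t)$ remains a proper closed convex function for every $t\in(0,1)$.
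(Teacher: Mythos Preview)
The paper does not actually prove Proposition~\ref{Ras}; it merely states it and refers the reader to \cite{Ra17} for a proof ``even in more general situations''. So there is nothing in the present paper to compare your argument against.

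That said, your proposal is correct and is essentially the natural duality argument. The smooth computation via the Schur complement is the right heuristic, and the rigorous part---building $w$ as the partial conjugate of the affine interpolation $h(y,t)=(1-t)u_0^*(y)+tu_1^*(y)$, checking that $w$ is a convex Alexandrov solution of (\ref{ss}), and invoking uniqueness---is exactly how one makes it precise. The observation that $\partial w$ lands in the $n$--dimensional graph $\{(y,u_0^*(y)-u_1^*(y))\}$, forcing the Monge--Amp\`ere measure to vanish, is the clean way to get $\det D^2_{x,t}w=0$ without any smoothness.

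The boundary condition on $\partial U\times(0,1)$, which you rightly flag, can be closed as follows. For $x\in\partial U$ one has
\[
w(x,t)=\sup_y\big\{(1-t)[x\cdot y-u_0^*(y)]+t[x\cdot y-u_1^*(y)]\big\}\le (1-t)u_0^{**}(x)+tu_1^{**}(x)=0.
\]
For the reverse inequality, note that for $y$ outside the (bounded, in the paper's setting) set $\partial u_0(U)\cup\partial u_1(U)$ the supremum defining each $u_j^*(y)$ is attained on $\partial U$, where $u_j=0$, so $u_0^*(y)=u_1^*(y)=h_{\overline U}(y)$, the support function of $\overline U$. Choosing $y$ in the outward normal cone at $x$ with $|y|$ large then gives $x\cdot y=h_{\overline U}(y)=u_0^*(y)=u_1^*(y)$, hence $w(x,t)\ge 0$. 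This is precisely your ``common recession behaviour'' remark made concrete.
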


{\bf Complex geodesics with toric data.}

As we shall also discuss the geodesic equation for plurisubharmonic functions we include also the necessary definitions.

\begin{defi} Let $\Omega$ be a domain in $\mathbb C^n$. An upper semicontinuous function $\phi$ is called plurisubharmonic, if its restriction to any complex line $L$ is subharmonic or constantly $-\infty$ on each component of $L\cap\Omega$. The space of plurisubharmonic functions will be denoted by $PSH(\Omega)$.
	\end{defi}
These are the complex analogues of convex functions. As we shall deal only with rotationally invariant plurisubharmonic functions in this note it is worth recalling that
$\phi$ is plurisubharmonic and rotationally invariant on its domain of definition iff the function $u(x):=\phi(e^{x_1},\cdots,e^{x_n})$ is convex.

The geodesic equation in this setting reads (see \cite{AD21})
\begin{equation}\label{MAcomplex}
	\begin{cases}
		\phi\in PSH(\Omega\times A)\cap C(\overline{\Omega\times A});\\
		(dd^c_{z,\zeta}\phi)^{n+1}=0 &{\rm in}\  \hbox{$\Omega\times A$}; \\
		\phi=\varphi_0 &{\rm in}\ \hbox{$\Omega\times \{|z|=1\}$}; \\
		\phi=\varphi_1 &{\rm in}\ \hbox{$\Omega\times \{|z|=e\}$}; \\
		\phi=0 &{\rm in}\ \hbox{$\partial \Omega\times A$,}
	\end{cases} 
\end{equation}
where  $A=\{z\in \C| 1<|z|<e\}$ denotes an annulus in $\C$  and $\phi(z,\zeta)=\varphi_t(z)$ with $t=\log|\zeta|$. Finally $(dd^c_{z,\zeta}\cdot)^n$ is the complex Monge-Amp\`ere operator in joint variables $(z,\zeta)$ which, for smooth input $\phi$, is simply the determinant of the complex Hessian of $\phi$.

Suppose now that $\Omega$ is a Reinhardt domain i.e. if $z=(z_1,\cdots, z_n)\in\Omega$, then\newline $(e^{i\theta_1}z_1,\cdots,e^{i\theta_n}z_n)\in \Omega$ for any $\theta_j\in[0,2\pi),\ j=1,\cdots,n$.

Recall the following classical fact (see also \cite{AD21}):
\begin{lem}\label{toric}
	Let $\Omega$ be a Reinhardt domain in $\mathbb C^n$ and $\phi$ be a bounded plurisubharmonic function on $\Omega$, invariant with respect to the toric action. Then:
	\begin{enumerate}
		\item The image $U$ of the mapping 
		$$\Omega\ni z\rightarrow (log|z_1|,\cdots,log|z_n|)\in\mathbb R^n$$
		is a domain in $\mathbb R^n$. $\Omega$ is pseudoconvex if and only if $U$ is convex.
		\item The function $u(x):=\phi(e^{x_1},\cdots,e^{x_n})$ is a convex function in $U$. Reversely for any bounded convex function $u$ on $U$ the function $\phi$ defined through this formula extends to a toric
		plurisubharmonic function on $\Omega$.
		\item $(dd^c\phi)^n=0$ if and only if $det(D^2u)=0$- the equivalence continues to hold for bounded singular $u$ and $\phi$ and then the equalities are understood in weak sense
		of measures- see \cite{Gu02,Kol05}.
	\end{enumerate} 
\end{lem}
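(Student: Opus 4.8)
The plan is to push every assertion onto the part of the domain lying over the torus, $\Omega^{*}:=\Omega\cap(\C^{*})^{n}$, where the logarithmic map $\mathrm{Log}:z\mapsto(\log|z_{1}|,\dots,\log|z_{n}|)$ is a real-analytic submersion, and to treat the coordinate hyperplanes $M=\{z_{1}\cdots z_{n}=0\}$ separately by removability arguments. On $\Omega^{*}$ the natural device is the holomorphic covering $\Phi:w\mapsto(e^{w_{1}},\dots,e^{w_{n}})$, with $w_{j}=x_{j}+i\theta_{j}$, under which $\mathrm{Re}\,w_{j}=x_{j}=\log|z_{j}|$; thus $\Phi$ presents $\Omega^{*}$, up to the torus action, as a quotient of the tube $T_{U}:=\{w:\mathrm{Re}\,w\in U\}$. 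For part (1), openness and connectedness of $U$ are immediate, since $\mathrm{Log}$ is a continuous open map and $\Omega$ is connected. For the equivalence of pseudoconvexity of $\Omega$ with convexity of $U$ I would argue as follows. If $\Omega$ is pseudoconvex, fix $x^{(0)},x^{(1)}\in U$, set $c=x^{(1)}-x^{(0)}$, and consider the analytic discs coming from $\zeta\mapsto(e^{x^{(0)}_{1}+c_{1}\zeta},\dots,e^{x^{(0)}_{n}+c_{n}\zeta})$ on the strip $\{0\le\mathrm{Re}\,\zeta\le1\}$, whose two boundary lines lie over the tori of points of $\Omega$ with logarithmic coordinates $x^{(0)},x^{(1)}$; the continuity principle (Kontinuit\"atssatz) then forces the whole strip into $\Omega$, so each $x^{(t)}=(1-t)x^{(0)}+tx^{(1)}\in U$ and $U$ is convex. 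Conversely, if $U$ is convex, a convex exhaustion of $U$ (for instance built from $-\log$ of the distance to $\partial U$, which is convex on a convex domain) pulls back under $\mathrm{Log}$ to a plurisubharmonic exhaustion of $\Omega^{*}$, yielding pseudoconvexity, the completion at the axes being the standard point noted below.

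For part (2) the key observation is that, by toric invariance, $\phi\circ\Phi(w)=\phi(e^{x_{1}},\dots,e^{x_{n}})=u(\mathrm{Re}\,w)$. A function of the form $u(\mathrm{Re}\,w)$ is plurisubharmonic on $T_{U}$ if and only if $u$ is convex, because restriction to a complex line computes the complex Hessian as $\tfrac14 D^{2}u$; since $\Phi$ is a local biholomorphism, this gives $u$ convex $\Leftrightarrow\ \phi$ plurisubharmonic on $\Omega^{*}$, which is the forward statement. For the reverse direction one starts from a bounded convex $u$, obtains a bounded toric plurisubharmonic $\phi$ on $\Omega^{*}$, and extends it across $M$: as $M\cap\Omega$ is an analytic (hence pluripolar) set and $\phi$ is bounded, the removable singularity theorem for plurisubharmonic functions produces a plurisubharmonic extension to all of $\Omega$, completing (2).

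For part (3), on $\Omega^{*}$ the identity $(\phi_{w_{j}\bar w_{k}})=\tfrac14 D^{2}u$ together with the nonvanishing holomorphic Jacobian of $w_{j}=\log z_{j}$ shows that $(dd^{c}\phi)^{n}$ and $\det(D^{2}u)$ vanish at corresponding points simultaneously; in the smooth case this settles the equivalence on $\Omega^{*}$, and since for bounded $\phi$ the Bedford--Taylor measure $(dd^{c}\phi)^{n}$ puts no mass on the pluripolar set $M$, the equivalence extends to all of $\Omega$. The weak case follows by approximation: choose smooth strictly convex $u_{\e}\downarrow u$, so that $\phi_{\e}\downarrow\phi$, and invoke the weak continuity of both the real and the complex Monge--Amp\`ere operators along decreasing sequences of bounded (pluri)subharmonic functions (see \cite{Gu02,Kol05}); the smooth identity then passes to the limit and gives the stated equivalence of measures.

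I expect the genuine difficulty to be concentrated entirely at the coordinate hyperplanes $M$, where the logarithmic dictionary degenerates: the extension of $\phi$ across $M$ in (2) and the claim that $(dd^{c}\phi)^{n}$ ignores $M$ in (3) must be justified by potential-theoretic removability rather than by the tube-domain picture, and the pseudoconvexity equivalence in (1) for Reinhardt domains that are not complete requires the usual additional care at the axes. Away from $M$ every assertion is a routine translation through $\Phi$.
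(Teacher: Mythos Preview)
The paper does not prove this lemma at all: it is introduced with ``Recall the following classical fact (see also \cite{AD21})'' and stated without argument. Your sketch is a correct outline of the standard tube-domain/removability argument, and you rightly isolate the only genuinely delicate points (behaviour at the coordinate hyperplanes $M$ and the pseudoconvexity equivalence for non-complete Reinhardt domains); there is nothing further to compare.
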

Finally we recall that a Reinhardt domain $\Omega$ is called {\it complete} if for any $z=(z_1,\cdots,z_n)\in\Omega$ and for any $|\lambda_j|\leq 1, \lambda_j\in\C, j=1,\cdots,n$ one has $(\lambda_1z_1,\cdots,\lambda_nz_n)\in\Omega$.

{\bf AM-HM matrix inequalities.}

It is a folklore, though rather unknown fact, that the classical inequality between arithmetic and harmonic means holds also for positive definite matrices (proofs of this can be found for example in \cite{BS78} or \cite{ST94}).

\begin{prop}\label{AMHM}
Let $A, B$ be positive definite matrices of the same dimension. Then for any $t\in[0,1]$ the inequality
$$((1-t)A^{-1}+tB^{-1})^{-1}\leq (1-t)A+tB$$
holds in the sense that the difference is a semi-positive definite matrix.
\end{prop}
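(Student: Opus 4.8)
The plan is to reduce the matrix inequality to its scalar analogue by a congruence transformation followed by a single diagonalization. Note first that, substituting $A\mapsto A^{-1}$ and $B\mapsto B^{-1}$, the statement is equivalent to the operator convexity of the inverse, $((1-t)A+tB)^{-1}\leq (1-t)A^{-1}+tB^{-1}$; I will argue directly with the statement as given. Since $A>0$ has a positive definite square root $A^{1/2}$, set $C:=A^{-1/2}BA^{-1/2}>0$. A direct computation, using $C^{-1}=A^{1/2}B^{-1}A^{1/2}$, gives
\begin{gather*}
(1-t)A^{-1}+tB^{-1}=A^{-1/2}\big((1-t)I+tC^{-1}\big)A^{-1/2},\\
(1-t)A+tB=A^{1/2}\big((1-t)I+tC\big)A^{1/2}.
\end{gather*}

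Because congruence by the invertible matrix $A^{1/2}$ is an order isomorphism for the Loewner order (its inverse being congruence by $A^{-1/2}$), the claim is equivalent to the normalized inequality
$$\big((1-t)I+tC^{-1}\big)^{-1}\leq (1-t)I+tC.$$
Diagonalizing $C=UDU^{*}$ with $U$ unitary and $D=\mathrm{diag}(\lambda_1,\dots,\lambda_n)$, $\lambda_j>0$, unitary conjugation turns both sides into functions of $D$, so the matrix inequality splits into the scalar inequalities, one for each eigenvalue $\lambda=\lambda_j$,
$$\frac{1}{(1-t)+t\lambda^{-1}}\leq (1-t)+t\lambda,\qquad \lambda>0,$$
which is the weighted harmonic--arithmetic mean inequality for $1$ and $\lambda$ (equivalently the convexity of $x\mapsto 1/x$ on $(0,\infty)$); clearing the positive denominator, the difference of the two sides equals $t(1-t)(\lambda-1)^2/\lambda\geq 0$.

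The only genuine obstacle is that $A^{-1}$ and $B^{-1}$ need not commute, so no simultaneous diagonalization is available; the congruence by $A^{1/2}$ is precisely the device that normalizes one matrix to $I$ while keeping the other positive definite, after which a single unitary diagonalization finishes the argument, and I anticipate no analytic difficulty beyond this bookkeeping. An alternative route that bypasses diagonalization uses the variational identity $\langle A^{-1}x,x\rangle=\max_{y}\big(2\langle x,y\rangle-\langle Ay,y\rangle\big)$ for $A>0$, with the maximum attained at $y=A^{-1}x$: for each fixed $y$ the right-hand side is affine in $A$, so $A\mapsto\langle A^{-1}x,x\rangle$, being a supremum of affine functions, is convex; testing against an arbitrary $x$ yields operator convexity of the inverse, and the substitution $A\mapsto A^{-1},\,B\mapsto B^{-1}$ then gives the proposition.
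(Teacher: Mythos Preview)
Your proof is correct and takes a genuinely different route from the paper's. The paper follows the Bhagwat--Subramanian argument: one writes the manifestly nonnegative sum
\[
(1-t)\,H_0AH_0+t\,H_1BH_1\geq 0,\qquad H_0=(1-t)A^{-1}+tB^{-1}-A^{-1},\ H_1=(1-t)A^{-1}+tB^{-1}-B^{-1},
\]
and then expands to recognise this expression as the congruence $M\big((1-t)A+tB-M^{-1}\big)M$ with $M=(1-t)A^{-1}+tB^{-1}$; multiplying by $M^{-1}$ on both sides finishes the proof. Your approach instead normalises by the congruence $X\mapsto A^{-1/2}XA^{-1/2}$ so that one matrix becomes $I$, after which a single diagonalisation reduces everything to the scalar AM--HM inequality; the variational alternative you sketch is equally valid. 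The paper's argument is entirely algebraic (no square roots, no spectral theorem) and its structure is what the authors then adapt, via the infimum representation of the harmonic mean, to the \emph{weighted} version in Proposition~\ref{AMHMweight}; your congruence-and-diagonalise method is shorter and more conceptual here but does not extend as directly to that weighted statement, since the weights on $A$ and $B$ differ and cannot be absorbed into a single congruence.
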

\begin{proof}
	We sketch an argument, based on the one in \cite{BS78}, for the sake of completeness.
	
	Obviously for any (not necessarily positive definite) symmetric matrix $H$ one has
	$$HAH\geq 0,\ HBH\geq0.$$
	Thus
	$$(1-t)\big[(1-t)A^{-1}+tB^{-1}-A^{-1}\big]A\big[(1-t)A^{-1}+tB^{-1}-A^{-1}\big]$$
	$$+t\big[(1-t)A^{-1}+tB^{-1}-B^{-1}\big]B\big[(1-t)A^{-1}+tB^{-1}-B^{-1}\big]\geq 0.$$
On the other hand the left hand side can be rewritten as
$$\big[(1-t)A^{-1}+tB^{-1}\big]\big((1-t)A+tB\big)\big[(1-t)A^{-1}+tB^{-1}\big]$$
$$+(1-2(1-t)-2t)\big[(1-t)A^{-1}+tB^{-1}\big]$$
$$=\big[(1-t)A^{-1}+tB^{-1}\big]\big((1-t)A+tB-[(1-t)A^{-1}+tB^{-1}]^{-1}\big)\big[(1-t)A^{-1}+tB^{-1}\big].$$
Multiplying from the left and from the right by the inverse of the (positive definite) matrix $(1-t)A^{-1}+tB^{-1}$ we finish the proof.
\end{proof}

We shall also need a weighted version of the above inequality. 
\begin{prop}\label{AMHMweight}
	Let $A=(a_{mk})$ and $B=(b_{mk})$ be as above and let $l_j,\ r_j\ j=1,\cdots,n$ be any positive numbers. Consider the positive definite matrices $\tilde{A}:=(a_{mk}l_ml_k)$ and $\tilde{B}:=(b_{mk}r_mr_k)$.  Then we have
	$$((1-t)(\tilde{A})^{-1}+t(\tilde{B})^{-1})^{-1}\leq \widetilde{(1-t)A+tB}$$
	with $\widetilde{(1-t)A+tB}_{mk}=[{(1-t)A+tB}]_{mk}s_ms_k$, where $s_i=(\frac{1-t}{l_i}+\frac{t}{r_i})^{-1}$ is the harmonic mean of $l_i$ and $r_i$.
\end{prop}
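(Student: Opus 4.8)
The plan is to remove the weights by a diagonal congruence and then rerun the elementary argument that proved Proposition~\ref{AMHM}. Write $L=\operatorname{diag}(l_1,\dots,l_n)$, $R=\operatorname{diag}(r_1,\dots,r_n)$ and $S=\operatorname{diag}(s_1,\dots,s_n)$, so that $\tilde A=LAL$, $\tilde B=RBR$ and $\widetilde{(1-t)A+tB}=SCS$ with $C:=(1-t)A+tB$. Since $S$ is a positive definite diagonal matrix, conjugation by $S^{-1}$ preserves the Loewner order while inversion reverses it; hence the asserted inequality $\big((1-t)\tilde A^{-1}+t\tilde B^{-1}\big)^{-1}\le SCS$ is equivalent to $S\big((1-t)\tilde A^{-1}+t\tilde B^{-1}\big)S\ge C^{-1}$. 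As diagonal matrices commute, the left-hand side equals $(1-t)PA^{-1}P+tQB^{-1}Q$ with $P:=SL^{-1}$ and $Q:=SR^{-1}$ positive diagonal matrices, so the whole statement reduces to proving
$$(1-t)\,PA^{-1}P+t\,QB^{-1}Q\ \ge\ C^{-1}.$$

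The reason the harmonic means enter the definition of $S$ is precisely that they make the cross terms cancel: the diagonal entries of $P$ and $Q$ are $p_i=s_i/l_i$ and $q_i=s_i/r_i$, so by definition of $s_i$ one gets $(1-t)p_i+tq_i=s_i\big(\tfrac{1-t}{l_i}+\tfrac{t}{r_i}\big)=1$, i.e. $(1-t)P+tQ=I$. Granting this, I would conclude exactly as in the proof of Proposition~\ref{AMHM}. For $A>0$ and any matrix $H$ one has $H^{\mathsf T}AH\ge 0$; apply this with $H:=C^{-1}-A^{-1}P$ and, with $B$ in place of $A$, with $H':=C^{-1}-B^{-1}Q$, expand both, and form the convex combination with weights $1-t$ and $t$. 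Substituting $(1-t)A+tB=C$ and $(1-t)P+tQ=I$ makes the middle terms collapse and leaves $(1-t)PA^{-1}P+tQB^{-1}Q-C^{-1}\ge 0$, as wanted.

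I expect the only genuinely clever point to be noticing that the harmonic–mean normalization is exactly the one for which $(1-t)P+tQ=I$; once that identity is isolated, the argument is a verbatim copy of the unweighted case. The remaining steps — that congruence by $S^{-1}$ and matrix inversion act on the Loewner order as claimed, and the routine expansion of $H^{\mathsf T}AH$ — are standard and should not present any difficulty.
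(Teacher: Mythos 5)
Your argument is correct, but it follows a genuinely different route from the paper. The paper first establishes a variational lemma (modeled on Lemma 5.12 of \cite{HP14}): $\langle z,[(1-t)A^{-1}+tB^{-1}]^{-1}z\rangle=\inf_{x+y=z}\bigl(\tfrac{\langle x,Ax\rangle}{1-t}+\tfrac{\langle y,By\rangle}{t}\bigr)$, and then proves the weighted inequality by testing this infimum with the particular splitting $x_i=\tfrac{(1-t)s_i}{l_i}z_i$, $y_i=\tfrac{t s_i}{r_i}z_i$, whose admissibility $x+y=z$ is exactly your identity $(1-t)P+tQ=I$ in disguise. You instead remove the weights by the diagonal congruence $\tilde A=LAL$, $\tilde B=RBR$, $\widetilde{(1-t)A+tB}=SCS$, use that congruence preserves and inversion reverses the Loewner order to reduce the claim to $(1-t)PA^{-1}P+tQB^{-1}Q\ge C^{-1}$ with $(1-t)P+tQ=I$, and then rerun the Bhagwat--Subramanian sum-of-squares trick from Proposition \ref{AMHM} with $H=C^{-1}-A^{-1}P$ and $H'=C^{-1}-B^{-1}Q$; I checked the expansion and the cross terms do collapse as you claim. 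So both proofs hinge on the same arithmetic fact about the harmonic means $s_i$, but yours is more self-contained (no auxiliary variational lemma, a verbatim extension of the unweighted proof already in the paper), while the paper's infimum characterization is more conceptual and reusable, making the weighted inequality a one-line consequence of a good choice of decomposition.
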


As we were unable to find this exact result in the literature we provide a detailed proof. First we need a lemma that is modeled on Lemma 5.12 from \cite{HP14}.
\begin{lem}
Let $A,\ B$ and $t$ be as above. Let also $<,>$ denote the scalar product in $\mathbb R^n$. Then for any vector $z\in\mathbb R^n$ one has
\begin{equation}\label{hmdef}
<z,[(1-t)A^{-1}+tB^{-1}]^{-1}z>=inf_{x+y=z}\frac{<x,Ax>}{1-t}+\frac{<y,By>}{t}.
\end{equation}
\end{lem}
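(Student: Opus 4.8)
The plan is to prove the variational identity \eqref{hmdef} by a direct constrained optimization argument, since the quadratic functional on the right is strictly convex in the free variable and the constraint $x+y=z$ is affine. First I would eliminate $y = z-x$ and consider
\[
F(x) := \frac{\langle x, Ax\rangle}{1-t} + \frac{\langle z-x, B(z-x)\rangle}{t},
\]
a strictly convex quadratic in $x\in\mathbb R^n$ (strict convexity because $A/(1-t) + B/t$ is positive definite), so it attains a unique minimum at the critical point. Setting the gradient to zero gives
\[
\frac{2Ax}{1-t} - \frac{2B(z-x)}{t} = 0 \quad\Longleftrightarrow\quad \Bigl(\tfrac{A}{1-t} + \tfrac{B}{t}\Bigr)x = \tfrac{B}{t}\,z,
\]
hence $x_{\min} = \bigl(\tfrac{A}{1-t}+\tfrac{B}{t}\bigr)^{-1}\tfrac{B}{t}z$, and correspondingly $y_{\min} = z - x_{\min} = \bigl(\tfrac{A}{1-t}+\tfrac{B}{t}\bigr)^{-1}\tfrac{A}{1-t}z$.

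The second step is to substitute $x_{\min}, y_{\min}$ back and simplify. Writing $M := \tfrac{A}{1-t}+\tfrac{B}{t}$, one has $\langle x_{\min}, Ax_{\min}\rangle/(1-t) = \langle M^{-1}\tfrac{B}{t}z,\ \tfrac{A}{1-t}M^{-1}\tfrac{B}{t}z\rangle$ and similarly for the $y$-term; adding them and using that $\tfrac{A}{1-t}M^{-1}\tfrac{B}{t} = \tfrac{B}{t}M^{-1}\tfrac{A}{1-t}$ (both equal $M^{-1}\tfrac{A}{1-t}\cdot\tfrac{B}{t}M^{-1}$ after one checks the commutation, or more cleanly: $\tfrac{A}{1-t} = M - \tfrac{B}{t}$), the cross terms combine so that $F(x_{\min}) = \langle z, \tfrac{B}{t}M^{-1}\tfrac{A}{1-t}z\rangle$. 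Finally I would identify $\tfrac{B}{t}M^{-1}\tfrac{A}{1-t}$ with $[(1-t)A^{-1}+tB^{-1}]^{-1}$: indeed
\[
\tfrac{B}{t}M^{-1}\tfrac{A}{1-t} = \bigl(\tfrac{1-t}{A}\cdot M\cdot \tfrac{t}{B}\bigr)^{-1} = \bigl((1-t)A^{-1}\cdot(\tfrac{A}{1-t}+\tfrac{B}{t})\cdot tB^{-1}\bigr)^{-1} = \bigl(tB^{-1} + (1-t)A^{-1}\bigr)^{-1},
\]
which is exactly the left-hand side of \eqref{hmdef}. (The formal manipulation with $1/A$ etc. is just shorthand; rigorously one multiplies out $\bigl((1-t)A^{-1} + tB^{-1}\bigr)\cdot\tfrac{B}{t}M^{-1}\tfrac{A}{1-t}$ and checks it equals the identity, using $A^{-1}\tfrac{B}{t} + t B^{-1}\tfrac{A}{t}\cdot\ldots$ — a short computation showing the product telescopes to $M^{-1}M = I$.)

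The only real care needed is bookkeeping with noncommuting matrices: $A$ and $B$ need not commute, so one cannot freely reorder products, and the cancellation of cross terms in Step 2 must be justified by the substitution $\tfrac{A}{1-t} = M - \tfrac{B}{t}$ rather than by commuting factors. I expect this to be the main (though still routine) obstacle; everything else is standard convex calculus. An alternative, perhaps cleaner, route avoids coordinates entirely: the identity \eqref{hmdef} is the statement that the harmonic-type mean of the quadratic forms $Q_A = \langle\cdot,A\cdot\rangle$ and $Q_B$ is computed by infimal convolution of the scaled forms, which is a known duality — the Legendre transform of an infimal convolution is the sum of Legendre transforms, and $(Q_C)^* = Q_{C^{-1}}$ for positive definite $C$; applying this to $\tfrac{1}{1-t}Q_A$ and $\tfrac{1}{t}Q_B$ gives \eqref{hmdef} after dualizing back. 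I would present the direct computation as the primary proof and perhaps remark on the duality interpretation.
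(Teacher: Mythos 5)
Your argument is correct and is essentially the paper's proof in a different arrangement: the paper exhibits the very same minimizer $x=(1-t)[tA+(1-t)B]^{-1}Bz$ by writing the difference of the two sides of \eqref{hmdef} as a single squared norm (completing the square), while you reach it through the first-order condition and back-substitution, and both hinge on the same identity $[(1-t)A^{-1}+tB^{-1}]^{-1}=A[tA+(1-t)B]^{-1}B$ (your $\tfrac{B}{t}M^{-1}\tfrac{A}{1-t}$ with $M=\tfrac{A}{1-t}+\tfrac{B}{t}$). One small correction: the parenthetical claim that $\tfrac{A}{1-t}M^{-1}\tfrac{B}{t}$ and $\tfrac{B}{t}M^{-1}\tfrac{A}{1-t}$ both equal $M^{-1}\tfrac{A}{1-t}\cdot\tfrac{B}{t}M^{-1}$ is false (already for scalars), but the equality of those two products is true and your cleaner route via the substitution $\tfrac{A}{1-t}=M-\tfrac{B}{t}$ is exactly what justifies the cancellation, so nothing in the proof breaks.
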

\begin{proof}
Note that, just as in \cite{HP14}, $[(1-t)A^{-1}+tB^{-1}]^{-1}=A[tA+(1-t)B]^{-1}B$.

Then, for fixed $x$ and $y$ summing to $z$, the difference of the right hand side and the left hand side of \ref{hmdef} reads

\begin{align*}
&\frac{<x,Ax>}{1-t}+\frac{<z-x,B(z-x)>}{t}\\
&-\frac1t<z,[(tA+(1-t)B)-(1-t)B][tA+(1-t)B]^{-1}Bz>\\
&=\frac{<x,Ax>}{1-t}+\frac{<z,Bz>}{t}+\frac{<x,Bx>}{t}-2\frac{<z,Bx>}{t}\\
&-\frac{<z,Bz>}{t}+\frac{1-t}{t}<Bz,[tA+(1-t)B]^{-1}Bz>\\
&=\frac{<x,[tA+(1-t)B]x>}{t(1-t)}+\frac{1-t}{t}<Bz,[tA+(1-t)B]^{-1}Bz>-2\frac{<Bz,x>}{t}\\
&=\frac1{t(1-t)}||[tA+(1-t)B]^{1/2}x-(1-t)[tA+(1-t)B]^{-1/2}Bz||^2
\end{align*}
with equality only for $x=(1-t)[tA+(1-t)B]^{-1}Bz$.
\end{proof}
\begin{proof}(Proof of the weighted AM-HM inequality)
	
	Of course it suffices to show that for any $z\in\mathbb R^n$ one has
	$$<z,((1-t)(\tilde{A})^{-1}+t(\tilde{B})^{-1})^{-1}z>\leq <z,[\widetilde{(1-t)A+tB}]z>$$
	
	In view of the previous lemma the left hand side is dominated by
	 $$\frac{<x,\tilde{A}x>}{1-t}+\frac{<y,\tilde{B}y>}{t}$$
for any two vectors $x$ and $y$ summing up to $z$. Choose now $x=(x_1,\cdots,x_n),\ y=(y_1,\cdots,y_n)$ by
$x_i=\frac{1-t}{l_i}s_iz_i$ and $y_i=\frac{t}{r_i}s_iz_i$ (by the definition of $s_i$ we have $x+y=z$). Then	
\begin{align*}
&\frac{<x,\tilde{A}x>}{1-t}+\frac{<y,\tilde{B}y>}{t}\\
&\sum_{i,j=1}^n[(1-t)a_{ij}s_iz_is_jz_j+tb_{ij}s_iz_is_jz_j]=<z,[\widetilde{(1-t)A+tB}]z>,
\end{align*}
as claimed.
\end{proof}
\section{Examples}

We begin with the following example borrowed from \cite{AD21}:
\begin{exa}\label{realexample} We take \;$U =(-1,1)$, $u_0(x)=2(x^2-1)$, $u_1(x)=x^2-1$. Then the weak geodesic $u(x,t)$ is given by the formula
	\begin{equation*}u(x,t):=
		\begin{cases}
			2(1-t)\left((\frac{x+t}{1-t})^2-1\right)& \text{if}\;\frac{x+t}{1-t}<\frac{-1}{2};\\
			\frac{2x^2}{1+t}+t-2 & \text{if}\; \frac{x+t}{1-t}\geq\frac{-1}{2}\; \text{and}\; \frac{x-t}{1-t}\leq \frac{1}{2};\\
			2(1-t)\left((\frac{x-t}{1-t})^2-1\right)& \text{if} \;\frac{x-t}{1-t}\geq\frac{1}{2}.
		\end{cases}
	\end{equation*}
\end{exa}
Observe that $\partial u_0(U)\neq \partial u_1(U)$. Note also that both $u_0,\ u_1$ belong to $\tilde{\mathcal S}$ as well as to $\tilde{\mathcal S}^{1,\alpha}$ for any $\alpha\in(0,1]$. The solution however fails to be better than Lipschitz globally as $D_xu=-2$ along $x=-\frac{t+1}2$, while $D_xu=-4$ along $x=-1$.

Thus if one seeks better global regularity the condition $\partial u_0(U)= \partial u_1(U)$ is necessary, while for local regularity one has to work away form the corner set $\partial U\times\{0,1\}$.

The second example shows that in $\tilde{\mathcal S}$, assuming $\partial u_0(U)= \partial u_1(U)$, no blow-up of the second order derivatives occurs despite the failure of the strict convexity of $u_0,\ u_1$ at the boundary of $U$:
\begin{exa}\label{e1}
	Let $U=(-1,1)$ and the functions $u_0,\ u_1$ are given by 
	$$u_0(x)=\log(x^2+1)-\log(2)\text{,}\;u_1(x)=-\frac{1}{8}x^4+\frac{3}{4}x^2-\frac{5}{8}.$$
	Then the weak geodesic $u$ joining them is globally $C^{1,1}$. 
	\end{exa}
Note that both $u_0,\ u_1$ belong to $\tilde{\mathcal S}\setminus \mathcal S$.
	It is straightforward to check to check that $u_0'(-1,1)= u_1'(-1,1)=(-1,1)$ i.e. the gradient images agree. 
	
	Furthermore,  $u_0$ and $u_1$ satisfy the following properties:$$u_0(\pm1)= u_1(\pm 1)=0,\; 
	u_0^{''}(x)=\frac{2(1-x^2)}{(x^2+1)^2}\;\text{and}\; u_1^{''}(x)=\frac{3}{2}(1-x^2).$$
	In order to compute the geodesic $u$ joining them we shall exploit the formula (\ref{fla}). 
	
	The supremum in the Legendre transformation of $u_0$ is achieved at $x_0=x_0(s)=\frac{1-\sqrt{1-s^2}}{s}$ solving $s=\frac{2x}{x^2+1}$. Hence the transform reads
	$$
	u^{*}_0(s)=1-\sqrt{1-s^2}-\log(1-\sqrt{1-s^2})+\log(s^2)=sx_0(s)-u_0(x_0(s)).$$
	
	In the case of $u_1$ the supremum is achieved at $x_1=x_1(s)$ given implicitly by $s=-\frac12x^3+\frac32x$.
	By solving the cubic equation one has
	\begin{eqnarray*}
		u_1^{*}(s)&=& 2s  \cos\left(\frac{1}{3}\arccos(-s)-\frac{2\pi}{3}\right)+2 \cos^4\left(\frac{1}{3}\arccos(-s)-\frac{2\pi}{3}\right)\\
		&-& 3\cos^2\left(\frac{1}{3}\arccos(-s)
		-\frac{2\pi}{3}\right)+\frac{5}{8}\\
		&=&sx_1(s)-u_1(x_1(s)).
	\end{eqnarray*}
	Denote now by $u_t^*(s)$ the partial Legendre transform of $u=u(x,t)$ in the $x$-direction.

	By Proposition (\ref{Ras}) we have,
	\begin{eqnarray*}
		&u^{*}_t(s)= (1-t)u^{*}_0(s)+tu^{*} _1(s)\\
		&=(1-t)\left( 1-\sqrt{1-s^2}-\log(1-\sqrt{1-s^2})+\log(s^2)\right)+t\biggl( 2s  \cos\left(\frac{1}{3}\arccos(-s)-\frac{2\pi}{3}\right)\\
		&+ 2 \cos^4\left(\frac{1}{3}\arccos(-s)-\frac{2\pi}{3}\right)
		- 3\cos^2\left(\frac{1}{3}\arccos(-s)
		-\frac{2\pi}{3}\right)+\frac{5}{8}\biggl).
	\end{eqnarray*}

Before we continue we note the implicit relations

\begin{equation}\label{star}
	s=u_0'(x_0(s)),\ \ s=u_1'(x_1(s)).
\end{equation}

By Proposition \ref{propLag} $u(x,t)=u_t(x)=(u_t^*)^*$. The extremum in the definition of the Legendre transform is reached for
$$y=\frac{\partial}{\partial s}u_t^*(s)$$
which, exploiting (\ref{star}) is easily seen to give
\begin{equation}\label{y}
y=(1-t)x_0(s(y,t))+tx_1(s(y,t)),
\end{equation}
where $s(y,t)$ is an implicit function defined through (\ref{y}).

Thus finally
\begin{equation}\label{uyt}
	u(y,t)=(1-t)u_0(x_0(s(y,t)))+tu_1(x_1(s(y,t))).
\end{equation}

Hence we compute, exploiting (\ref{uyt}), (\ref{star}) and (\ref{y})
\begin{align*}
	&u_y'(y,t)=(1-t)u_0'(x_0(s(y,t)))x_0'(s(y,t))s'_y(y,t)+tu_1'(x_1(s(y,t)))x_1'(s(y,t))s'_y(y,t)\\
	&=s(y,t)\big[(1-t)x_0'(s(y,t))s'_y(y,t)+tx_1'(s(y,t))s'_y(y,t)\big]\\
	&=s(y,t).
\end{align*}
In order to obtain $u_{yy}''$ and $u_{yt}''$ we differentiate the last equality in $y$ and $t$ direction respectively. Using (\ref{y}) and (\ref{star}) once again we obtain

\begin{equation}\label{flayy}
u_{yy}''(y,t)=(\frac{1-t}{u_0''(x_0(s(y,t)))}+\frac{t}{u_1''(x_1(s(y,t)))})^{-1}=:\frac 1H,	
\end{equation}
\begin{equation}\label{flayt}
	u_{yt}''(y,t)=\frac{x_0(s(y,t))-x_1(s(y,t))}{H}.\end{equation}
Finally from the homogeneous Monge-Amp\`ere equation we obtain
\begin{equation}\label{flatt}
	u_{tt}''(y,t)=\frac{(x_0(s(y,t))-x_1(s(y,t)))^2}{H}.
\end{equation}

As $x_0,\ x_1$ are uniformly bounded the Hessian would be under control provided we have a bound of $\frac 1H$ from above. But from the arithmetic and harmonic means inequality
$$0< \frac1H\leq (1-t)u_0''(x_0(s(y,t)))+tu_1''(x_1(s(y,t)))\leq C,$$
for a uniform $C$ despite both $u_0$ and $u_1$ have vanishing second derivatives at the boundary. 

Our last example covers the situation when $\varphi_0$ belongs to $\mathcal H$ but $\varphi_1$ fails to be strictly plurisubharmonic in $\Omega$.

\begin{exa}\label{3rdexa}
Let $\Omega$ be the unit disc in $\mathbb C$. Let also $\varphi_0(z)=2(|z|^2-1),\ \varphi_1(z)=|z|^4-1$. Then the geodesic $\phi$ joining them is $C^{1,1}$ (even $C^2$) smooth, despite the fact that $\varphi_1$ fails to be strictly plurisubharmonic at zero.
\end{exa}

Note that logarithmic transform sends the unit disc to $\mathbb R_{<0}$. then 
$u_0(x)=\varphi_0(e^x)=2(e^{2x}-1)$ while $u_1(x)=e^{4x}-1$. The constants are chosen so that the gradient images of $u_0$ and $u_1$ agree.

If $(\xi,0)$ and $(\eta,1)$ are the endpoints of the segment passing through $(x,t)$ along which the real geodesic $u(x,t)=\phi(e^x,e^t)$ is affine, then a direct computation shows that
$$\xi(x,t)=2\eta(x,t),\ \ \ \eta(x,t)=\frac{2x}{2-t}.$$
Thus
\begin{align*}
&u(x,t)=(1-t)u_0(\xi(x,t))+tu_1(\eta(x,t))\\
&=(2-t)[e^{\frac{4x}{2-t}}-1]
\end{align*}
and hence
\begin{equation}\label{fla3rdexa}
	\phi(z,\zeta)=(2-log|\zeta|)[|z|^{\frac{4}{2-log|\zeta|}}-1].
\end{equation}
We remark that in this example the difference $\xi-\eta$ is unbounded as $x\rightarrow-\infty$.

\section{Proof of the main theorem}

Below we provide a proof of Theorem \ref{main}. Not surprisingly the proof will follow the corresponding argument from \cite{AD21} up to a point where an important amendment is made. For this reason we shall keep the notation from there.

The first lemma, just as in \cite{AD21}, is taken from \cite{CNS86}.

\begin{lem}\label{lem1} Let $(x^0,t^0)$ be any point in $U\times(0,1)$. Subtracting a linear function if necessary, we may suppose that 
	$$ u\geq 0,\;\;\; u(x^0,t^0)=0.$$
	Then $(x^0,t^0)$ is in the convex hull of $(n+1)$ points (not necessarily distinct)\newline $(x^1,t^1), (x^2,t^2),...(x^{n+1},t^{n+1})$ in $\partial (U\times (0,1))$ with $u(x^i,t^i)=0$ for all $i\in \{1,2,...n+1\}.$
\end{lem}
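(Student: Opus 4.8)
The plan is to reproduce the classical argument of \cite{CNS86} (compare the treatments of the degenerate Monge--Amp\`ere equation in \cite{Gu02,Fi17}), which rests on the fact that a convex solution of $\det D^2u=0$ has a ``ruled'' contact structure. After the stated normalization we work with the contact set
$$Z:=\{(x,t)\in\overline{U\times(0,1)}:\ u(x,t)=0\},$$
i.e.\ the set on which the convex function $u$ attains its minimum, which is therefore a compact convex subset of $\R^{n+1}$ containing $(x^0,t^0)$. Everything follows from the single claim that $Z$ has no extreme point in the \emph{open} set $U\times(0,1)$. Granting this, Minkowski's theorem writes $Z$ as the closed convex hull of its extreme points, all of which then lie in $\partial(U\times(0,1))$ and satisfy $u=0$ there; moreover $Z$ cannot contain an open ball of $\R^{n+1}$, for otherwise $u$ would be affine on an open subset of $U\times(0,1)$, hence on an open slice $U'\times\{t\}$, contradicting the strict convexity of the geodesic slices $u_t$ (which follows from Propositions \ref{Ras} and \ref{propLag}); thus $\dim Z\le n$ and Carath\'eodory's theorem applied inside the affine hull of $Z$ exhibits $(x^0,t^0)$ as a convex combination of at most $n+1$ extreme points of $Z$, which is precisely the assertion.

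To prove the claim, suppose for contradiction that $Z$ has an extreme point lying in $U\times(0,1)$. The extreme points of $Z$ contained in $\partial(U\times(0,1))$ form a closed subset of $Z$, while by Straszewicz's theorem the exposed points are dense among all extreme points; hence there is an \emph{exposed} point $y_1\in U\times(0,1)$ of $Z$, i.e.\ an affine $g$ with $g\le0$ on $Z$, $g(y_1)=0$ and $\{g=0\}\cap Z=\{y_1\}$. For $\e>0$ small, $u-\e g$ is still strictly positive on $\partial(U\times(0,1))$: near the compact set $Z\cap\partial(U\times(0,1))$ one uses $u\ge0$ together with $g<0$ there, and away from it one uses that $u\ge c>0$ while $g$ is bounded. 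Consequently $u-\e g$ attains its minimum $m_\e\le0$ on a nonempty compact set $K_\e$ compactly contained in $U\times(0,1)$, and $\e g+m_\e$ is an affine support function of $u$ whose contact set $K_\e$ is a nonempty compact subset of $U\times(0,1)$. But a convex function possessing such a contact set has strictly positive Monge--Amp\`ere mass on any neighbourhood of $K_\e$ — its normal image there contains a neighbourhood of $\nabla(\e g)$, because $u-\langle p,\cdot\rangle$ keeps attaining an interior minimum for $p$ close to $\e\nabla g$ — which contradicts $\det D^2 u=0$ in $U\times(0,1)$.

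The main obstacle is exactly this last step: converting the algebraic identity $\det D^2u=0$ into the geometric statement that no support plane of $u$ can touch the graph only along a compact set sitting inside $U\times(0,1)$. The delicate points are that extreme points of $Z$ need not be exposed (hence the appeal to Straszewicz's theorem, or, as in \cite{CNS86}, a direct tilting argument), and that the tilted plane $\e g+m_\e$ genuinely has a nonempty contact set which, in addition, stays away from $\partial(U\times(0,1))$; here the global boundedness of $u$ coming from the Walsh construction recalled in the Preliminaries, and the fact that $y_1$ is exposed, are what make the estimates close.
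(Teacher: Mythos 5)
Your proposal is correct, and it is worth noting how it sits relative to the paper: the paper does not prove this lemma at all, but imports it from \cite{CNS86} (via \cite{AD21}). Your argument is in essence the classical one behind that citation --- tilt a supporting affine function at a contact point that sits in the interior, observe that the tilted plane has a nonempty contact set compactly contained in $U\times(0,1)$, and conclude that the normal image of a neighbourhood of that set contains a ball around the tilted slope, so the Aleksandrov Monge--Amp\`ere measure is positive there, contradicting $\det D^2_{x,t}u=0$. You organize it differently: instead of separating $(x^0,t^0)$ from the convex hull of the boundary zeros as in \cite{CNS86}, you work with extreme points of the contact set $Z$ and invoke Straszewicz to replace a hypothetical interior extreme point by an interior \emph{exposed} point, which is a clean and equally valid variant. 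You also add a step the bare citation leaves implicit: since the domain $U\times(0,1)$ is $(n+1)$-dimensional, plain Carath\'eodory would only give $n+2$ points, and your reduction to $n+1$ rests on the claim that $Z$ is not full-dimensional. That claim is correct, but the justification ``contradicts the strict convexity of the geodesic slices, which follows from Propositions \ref{Ras} and \ref{propLag}'' deserves one more line: strict convexity of $u_0,u_1$ (together with strict convexity of $\partial U$ and $u_j=0$ there) gives uniqueness of the maximizer in the Legendre transform, hence $u_0^*,u_1^*$ are differentiable everywhere, hence so is $u^*(\cdot,t)=(1-t)u_0^*+tu_1^*$; if $u(\cdot,t)$ were affine with slope $p$ on an open ball, that ball would lie in $\partial u^*(\cdot,t)(p)$, contradicting this differentiability. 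With that sentence supplied the proof is complete; the discrepancy in the point count is in any case harmless for the paper, since Lemma \ref{AbjDin} immediately refines the convex hull to a single segment.
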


Next we borrow a crucial observation from \cite{AD21}- see the Claim 1 in the proof of inequality (2.1) there:

\begin{lem}\label{AbjDin}
	Such a convex hull consists of a line segment joining a point from $\overline{U}\times\{0\}$ with a point $\overline{U}\times\{1\}$ which passes through $(x^0,t^0)$.
\end{lem}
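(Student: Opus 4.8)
\emph{Proof proposal.} The plan is to keep the notation of Lemma \ref{lem1}, analyse the zero set of the (already normalised) convex function $u$ face by face, and exploit the very rigid boundary data. After subtracting the affine function we have $u\ge 0$ on $\overline{U\times(0,1)}$, $u(x^0,t^0)=0$, and $u$ differs from a solution of (\ref{ss}) only by an affine function; consequently on the bottom face $F_0:=\overline U\times\{0\}$ the function $u$ equals $u_0$ minus an affine function of $x$ (hence is still strictly convex on $U$, resp.\ satisfies $D^2(\cdot)>0$ on $U$), on the top face $F_1:=\overline U\times\{1\}$ it is $u_1$ minus an affine function of $x$, and on the lateral face $L:=\partial U\times[0,1]$ it is an affine function (since there $u_0=u_1=0$ and $u=0$ on $\partial U\times(0,1)$). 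Write $(x^0,t^0)=\sum_i\lambda_i(x^i,t^i)$ with $(x^i,t^i)\in\partial(U\times(0,1))$, $u(x^i,t^i)=0$, and, discarding the vanishing ones, $\lambda_i>0$ for all $i$.

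First I would show that $u$ has at most one zero on $F_0$ and at most one zero on $F_1$. Indeed, if $u$ vanished at two distinct points $a,b\in F_0$ then, being convex and nonnegative, it would vanish on the whole segment $[a,b]$; since $U$ is strictly convex the relatively open segment lies in $U\times\{0\}$, so picking $c$ there we would get that $u_0$ agrees on $[a,b]$ with the affine function $L$ for which $u|_{F_0}=u_0-L$, forcing $(b-a)^\top D^2u_0(c)(b-a)=0$ — impossible as $D^2u_0(c)>0$ (in the $C^{1,\alpha}$ setting one argues instead that $L$ is then an affine support of $u_0$ at $c$ whose contact set contains $[a,b]\neq\{c\}$, contradicting strict convexity at $c$). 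Denote by $p_0=(\xi,0)$ and $p_1=(\eta,1)$ these (at most) unique zeros.

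Next I would rule out any $(x^i,t^i)$ with $\lambda_i>0$ sitting on the relatively open lateral face $\partial U\times(0,1)$. If one did, then on the vertical segment $\{x^i\}\times[0,1]\subset L$ the function $u$ is affine, nonnegative, and vanishes at the interior point $t^i$, hence vanishes identically there; in particular $(x^i,0)$ and $(x^i,1)$ are zeros of $u$ on $F_0$ and $F_1$, so $(x^i,0)=p_0$, $(x^i,1)=p_1$, and $\xi=\eta=x^i\in\partial U$. But then \emph{every} $(x^j,t^j)$ with $\lambda_j>0$ lies on the single segment $\{x^i\}\times[0,1]$ (those on $F_0$ or $F_1$ equal its endpoints $p_0,p_1$, and any other lateral one has first coordinate $x^i$ by the same argument), so $(x^0,t^0)$ lies on it and $x^0=x^i\in\partial U$, contradicting $(x^0,t^0)\in U\times(0,1)$. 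Hence every $(x^i,t^i)$ with $\lambda_i>0$ lies in $F_0$ or $F_1$, i.e.\ equals $p_0$ or $p_1$; since $t^0\in(0,1)$ both must actually appear, so the convex hull in Lemma \ref{lem1} is the segment $[p_0,p_1]$, joining the point $(\xi,0)\in\overline U\times\{0\}$ to the point $(\eta,1)\in\overline U\times\{1\}$ and passing through $(x^0,t^0)$.

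I expect the only delicate step to be the ``at most one zero per face'' claim: the subtlety is to make sure the hypothetical second zero can be chosen in the interior of $U$, so that strict convexity of $u_0$ (or positivity of its Hessian) can be invoked; this is precisely where strict convexity of the domain $U$ enters. Everything else is elementary convex geometry and, beyond what Lemma \ref{lem1} already encodes, makes no further use of the homogeneous Monge--Amp\`ere equation.
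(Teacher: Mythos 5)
Your argument is correct, and it is essentially the argument the paper imports verbatim from \cite{AD21}: strict convexity of $u_0$, $u_1$ at interior points (plus strict convexity of the domain $U$) forces at most one zero of the normalised $u$ on each of $\overline{U}\times\{0\}$ and $\overline{U}\times\{1\}$, while affineness of $u$ on $\partial U\times[0,1]$ rules out vertices on the open lateral face, exactly as the paper's remark that ``no regularity of the boundary data besides strict convexity is used'' indicates. The only cosmetic point is that you discard zero-weight vertices, but the same reasoning shows any such vertex would also have to coincide with $p_0$ or $p_1$ (a lateral one would force $x^0\in\partial U$), so the full hull is indeed the segment.
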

\begin{proof}
	The argument from \cite{AD21} applies verbatim. Notice that no regularity of the boundary data besides strict convexity (in the sense of the current note) is used there. Note also that the same conclusion is being drawn in formula (\ref{uyt}) in Example \ref{e1}.
\end{proof}

Just as in \cite{AD21} we denote by $\xi(x,t)$ and $\eta(x,t)$ the points in $\overline{U}$, so that $(\xi(x,t),0)$ and $(\eta(x,t),1)$ are the endpoints of the segment containing $(x,t)$ along which $u$ vanishes.

Then Lemma 1.7 from \cite{AD21} states that $\xi$ and $\eta$ are smooth functions in $U\times(0,1)$ for $\varphi_0,\ \varphi_1\in\mathcal S$. This is also true in $\tilde{\mathcal S}$ but since implicit function theorem was used in the argument (see the proof of Lemma 1.7 in \cite{AD21})  a precise control on norms is needed to justify that no blow-up near the boundary occurs. This was the exact reason for assuming strict convexity up to the boundary in \cite{AD21}.

In the following lemma we provide a crucial formula for the Hessian of $u$ in the spatial direction. 
\begin{lem}\label{thirdlemma}Let $u(x,t)$ be a weak geodesic between $\varphi_0$ and $\varphi_1$ in $\tilde{\mathcal{S}}$. We assume furthermore that $\partial \varphi_0(U)=\partial \varphi_1(U)$. Then the second derivative of $u(x,t)$ in the space $x$ direction is  given as follows:
	$$D^2_{xx}u(x,t)=\left((1-t)(D^2_{xx} \varphi_0(\xi))^{-1}+t(D^2_{xx} \varphi_1(\eta))^{-1}\right)^{-1},$$
	where $D^2_{xx}$ is the Hessian of $u$ with respect the variable $x$ only.
\end{lem}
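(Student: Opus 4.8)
The plan is to route everything through the partial Legendre transform, using Proposition \ref{Ras} to describe the geodesic and Proposition \ref{propLag} to differentiate. Set $V := \partial\varphi_0(U) = \partial\varphi_1(U)$. Since $\varphi_0,\varphi_1 \in \tilde{\mathcal S}$ are $C^\infty$ and strictly convex on $U$, they are $C^1$ there, so each $\partial\varphi_j(U) = \nabla\varphi_j(U)$ is open and $\nabla\varphi_j : U \to V$ is a diffeomorphism; in particular $\varphi_j^\ast$ is finite, smooth and strictly convex on $V$ by Proposition \ref{propLag}(1)--(2). Writing $u_t^\ast$ for the partial Legendre transform of $u(\cdot,t)$ in the $x$-variable, Proposition \ref{Ras} gives $u_t^\ast = (1-t)\varphi_0^\ast + t\varphi_1^\ast$, which is finite precisely on $V$ (this is where $\partial\varphi_0(U)=\partial\varphi_1(U)$ is used essentially) and is smooth and strictly convex there.

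Next I would fix $(x,t)\in U\times(0,1)$. Since $u_t^\ast$ is smooth and strictly convex on the open set $V$, Proposition \ref{propLag}(2)--(3) show $u(\cdot,t) = (u_t^\ast)^\ast$ is smooth and strictly convex on $U$, so $D^2_{xx}u(x,t)$ exists and is positive definite. Let $y := \nabla_x u(x,t) \in V$ be the point realizing the supremum defining $(u_t^\ast)^\ast$, and put $\xi := \nabla\varphi_0^\ast(y)$, $\eta := \nabla\varphi_1^\ast(y)$, so $\xi,\eta \in U$ and $y = \nabla\varphi_0(\xi) = \nabla\varphi_1(\eta)$. Differentiating $u_t^\ast = (1-t)\varphi_0^\ast + t\varphi_1^\ast$ twice in $y$ and applying Proposition \ref{propLag}(4) to each summand gives
\[
D^2_{yy}u_t^\ast(y) = (1-t)(D^2\varphi_0(\xi))^{-1} + t(D^2\varphi_1(\eta))^{-1},
\]
while Proposition \ref{propLag}(4) applied to the pair $(u_t^\ast, u(\cdot,t))$ gives $D^2_{xx}u(x,t) = \big(D^2_{yy}u_t^\ast(y)\big)^{-1}$; combining the two is precisely the claimed formula. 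It remains to see that these $\xi,\eta$ are the endpoints in Lemma \ref{AbjDin}: from the inverse Legendre relation $x = \nabla_y u_t^\ast(y) = (1-t)\xi + t\eta$ and $u(x,t) = x\cdot y - u_t^\ast(y) = (1-t)(\xi\cdot y - \varphi_0^\ast(y)) + t(\eta\cdot y - \varphi_1^\ast(y)) = (1-t)\varphi_0(\xi) + t\varphi_1(\eta)$, the points $(\xi,0),(\eta,1)$ are endpoints of a segment through $(x,t)$ along which $u$ is affine, so by uniqueness of that segment they coincide with $\xi(x,t),\eta(x,t)$.

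I expect the only genuine subtlety — and the place the hypotheses really matter — to be the interior statements: that $y = \nabla_x u(x,t)$ lands in the open set $V$ and that $\xi,\eta$ stay in $U$ rather than on $\partial U$, since only then are $D^2\varphi_0(\xi)$, $D^2\varphi_1(\eta)$ positive definite and the inverses meaningful. As noted above, $\partial\varphi_0(U)=\partial\varphi_1(U)$ is exactly what makes $u_t^\ast$ finite — hence, being convex, smooth — on a full-dimensional common image, so that $\nabla\varphi_j^\ast(V)\subseteq U$; dropping it lets one transform be $+\infty$ at $y$ and the interpolated function leave $\tilde{\mathcal S}$, which is the mechanism behind the merely Lipschitz behaviour in Example \ref{realexample}.

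As an alternative one can argue directly from Lemma \ref{AbjDin}, exactly as in Example \ref{e1}: write $u(x,t) = (1-t)\varphi_0(\xi) + t\varphi_1(\eta)$ with $x = (1-t)\xi + t\eta$ and $\nabla_x u = \nabla\varphi_0(\xi) = \nabla\varphi_1(\eta)$, invoke the smoothness of $\xi,\eta$ on $U\times(0,1)$ (the $\tilde{\mathcal S}$-analogue of Lemma~1.7 of \cite{AD21}), differentiate the two gradient identities in $x$ to obtain $D_x\xi = (D^2\varphi_0(\xi))^{-1}D^2_{xx}u$ and $D_x\eta = (D^2\varphi_1(\eta))^{-1}D^2_{xx}u$, and substitute into $I = (1-t)D_x\xi + tD_x\eta$. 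I would keep the Legendre-transform proof as the primary one, since it is self-contained given the Propositions above and sidesteps the delicate norm control near $\partial U$ needed to make that smoothness uniform.
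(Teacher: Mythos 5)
Your primary argument is a genuinely different route from the paper's. The paper proves Lemma \ref{thirdlemma} directly from the segment structure of Lemma \ref{AbjDin}: linearity of $u$ along the segment gives $D_x\varphi_0(\xi)=D_xu(x,t)=D_x\varphi_1(\eta)$; differentiating this in $x$ gives $D^2_{xx}u=D^2_{xx}\varphi_0(\xi)D_x\xi=D^2_{xx}\varphi_1(\eta)D_x\eta$; and differentiating $(1-t)\xi+t\eta=x$ then yields the formula. That is precisely your ``alternative'' argument, including its reliance on the differentiability of $(x,t)\mapsto(\xi,\eta)$, which the paper justifies by reference to Lemma 1.7 of \cite{AD21}. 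Your primary route instead globalizes the computation of Example \ref{e1} through Proposition \ref{Ras} and Proposition \ref{propLag}(4); its advantage is that it never needs the smoothness of $\xi,\eta$ in $(x,t)$, and it produces the relations $x=(1-t)\xi+t\eta$ and $u(x,t)=(1-t)\varphi_0(\xi)+t\varphi_1(\eta)$ as by-products (your identification of these $\xi,\eta$ with the endpoints of the vanishing segment, via convexity plus equality with the chord at one interior point and uniqueness of the segment, is fine, though both small facts deserve a line of proof).

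The genuine soft spot is exactly the one you flag but do not close: you need that for every $(x,t)\in U\times(0,1)$ the supremum defining $(u_t^\ast)^\ast(x)$ is attained at a point $y$ of the open set $V$, equivalently that $\nabla u_t^\ast$ maps $V$ \emph{onto} $U$, equivalently that the contact points $\xi,\eta$ lie in $U$ and not on $\partial U$. Your justification --- that $u_t^\ast$ is finite precisely on $V$ --- does not give this: finiteness of a convex function on an open set says nothing about where the supremum in the inverse transform is attained (and for bounded $U$ the transforms $\varphi_j^\ast$ are finite on all of $\R^n$, so Proposition \ref{propLag}(1) cannot carry this weight); without it, Proposition \ref{propLag}(2) only yields smoothness of $(u_t^\ast)^\ast$ on $\nabla u_t^\ast(V)$, which is not yet known to contain $U$. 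The paper's proof uses the same interiority implicitly (it inverts $D^2\varphi_j$ at $\xi,\eta$), but since your whole argument is built on it, it needs a proof. A short patch using Lemma \ref{lem1} and Lemma \ref{AbjDin}: subtract the affine support $l$ at $(x,t)$. If both endpoints were on $\partial U$, then $l$ vanishes at both ends of the segment, so $u(x,t)=l(x,t)=0$, contradicting $u(x,t)\le(1-t)\varphi_0(x)+t\varphi_1(x)<0$. If only $\xi\in\partial U$ while $\eta\in U$, then $D_xl=\nabla\varphi_1(\eta)\in\partial\varphi_1(U)=\partial\varphi_0(U)$, so $\nabla\varphi_0(\xi')=D_xl$ for some $\xi'\in U$; the convex function $h:=\varphi_0-l(\cdot,0)\ge0$ then attains the value $0$ both at $\xi'$ (its critical, hence global minimum point) and at $\xi$, forcing $h\equiv0$ on $[\xi',\xi]$ and contradicting strict convexity of $\varphi_0$. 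Hence $\xi,\eta\in U$, $y:=D_xl=\nabla\varphi_0(\xi)\in V$ satisfies $\nabla u_t^\ast(y)=(1-t)\xi+t\eta=x$, and your Legendre argument closes. Note this patch still invokes the segment structure, so the duality route does not entirely bypass Lemma \ref{AbjDin}; what it does avoid is the implicit-function-theorem smoothness of $\xi,\eta$.
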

\begin{proof}
Note that the claimed result simply states that formula (\ref{flayy}) from the computations in Example \ref{e1} holds regardless of the space dimension and boundary data.

Fix a point $(x,{t})\in U\times(0,1)$. Linearity of $u$ along $L$ forces the
equality
\begin{equation}\label{firstequalitygradients}
	D_x\varphi_0(\xi({x},{t}))=D_xu(\xi({x},{t}),0)=D_xu(x,t)=D_xu(\eta({x},{t}),1)=D_x\varphi_1(\eta({x},{t})).
\end{equation}

Differentiating this once again one obtains
\begin{equation}\label{flanew}
D^2_{xx}u(x,t)=D^2_{xx}\varphi_0(\xi)D_x\xi=D^2_{xx}\varphi_1(\eta)D_x\eta.
\end{equation}
From the definition of $\tilde{\mathcal S}$ both $D^2_{xx}\varphi_0$ and $D^2_{xx}\varphi_1$ are positive definite and hence invertible.

Next we differentiate in $x$ the equality  
\begin{equation}\label{txieta}
(1-t)\xi(x,t)+t\eta(x,t)=x
\end{equation}
and obtain 
\begin{equation}\label{fladxx}
	Id=(1-t)D_x\xi+tD_x\eta
	\end{equation}
$$=(1-t)(D^2_{xx}\varphi_0(\xi))^{-1}D^2_{xx}\varphi_0(\xi)D_x\xi+t(D^2_{xx}\varphi_1(\eta))^{-1}D^2_{xx}\varphi_1(\eta)D_x\eta$$
$$=\big[(1-t)(D^2_{xx}\varphi_0(\xi))^{-1}+t(D^2_{xx}\varphi_1(\eta))^{-1}\big]\times D^2_{xx}u(x,t),$$
where we applied (\ref{flanew}) to obtain the last equality.

The claimed formula is a direct consequence of (\ref{fladxx}).

\end{proof}
Next we compute the second derivatives in mixed time-spatial direction. Not surprisingly the result generalizes formula (\ref{flayt}).

\begin{lem}\label{xt}
	Let $(u_t)=u(x,t)$  be a geodesic path connecting $\varphi_0$ and $\varphi_1$ in  $\tilde{\mathcal{S}}$. If $$\partial \varphi_0(U)=\partial \varphi_1(U)$$
	then
	$$D^2_{xt}u(x,t)=\big[(1-t)(D^2\varphi_0)^{-1}(\xi)+t(D^2\varphi_1)^{-1}(\eta))\big]^{-1}(\xi-\eta).$$ 
\end{lem}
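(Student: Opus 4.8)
The plan is to differentiate the gradient--matching identity (\ref{firstequalitygradients}) in the $t$-direction rather than in the $x$-direction, and then run the result through the same chain of substitutions used in the proof of Lemma \ref{thirdlemma}. First I would note that, exactly as in Lemma \ref{thirdlemma}, the functions $\xi(x,t)$ and $\eta(x,t)$ are smooth (in particular differentiable in $t$) on $U\times(0,1)$: this is the $\tilde{\mathcal S}$-analogue of Lemma 1.7 of \cite{AD21}, where the implicit function theorem applies with the requisite control on norms because the largest eigenvalue of $D^2\varphi_j$ is bounded and one stays in the interior.

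Granting this, differentiating $D_x\varphi_0(\xi(x,t))=D_xu(x,t)=D_x\varphi_1(\eta(x,t))$ from (\ref{firstequalitygradients}) with respect to $t$ gives
$$D^2_{xx}\varphi_0(\xi)\,\partial_t\xi \;=\; D^2_{xt}u(x,t)\;=\;D^2_{xx}\varphi_1(\eta)\,\partial_t\eta .$$
Next I would differentiate the segment identity (\ref{txieta}), namely $(1-t)\xi(x,t)+t\eta(x,t)=x$, in the $t$-variable, obtaining
$$(1-t)\partial_t\xi + t\,\partial_t\eta \;=\; \xi-\eta .$$
Now I insert $\mathrm{Id}=(D^2_{xx}\varphi_0(\xi))^{-1}D^2_{xx}\varphi_0(\xi)$ in front of $\partial_t\xi$ and $\mathrm{Id}=(D^2_{xx}\varphi_1(\eta))^{-1}D^2_{xx}\varphi_1(\eta)$ in front of $\partial_t\eta$, and use the preceding display to replace both $D^2_{xx}\varphi_0(\xi)\partial_t\xi$ and $D^2_{xx}\varphi_1(\eta)\partial_t\eta$ by $D^2_{xt}u(x,t)$. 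This yields
$$\xi-\eta \;=\; \big[(1-t)(D^2\varphi_0(\xi))^{-1}+t(D^2\varphi_1(\eta))^{-1}\big]\,D^2_{xt}u(x,t),$$
and since the bracketed matrix is a sum of positive definite matrices (as in Lemma \ref{thirdlemma}), it is invertible; solving for $D^2_{xt}u$ produces the claimed formula.

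There is no real analytic obstacle here beyond the differentiability of $\xi,\eta$ in the interior, which is inherited from \cite{AD21}; the computation is the exact $t$-derivative mirror of the $x$-derivative computation of Lemma \ref{thirdlemma}, and it also recovers formula (\ref{flayt}) when $n=1$. The only point that needs care is the bookkeeping of matrices versus vectors: $D^2_{xt}u$ is the $n$-vector $\partial_t(D_xu)$, while the $D^2_{xx}\varphi_j$ are $n\times n$ matrices and $\xi-\eta$ is an $n$-vector, so every product above is matrix-times-vector and the final identity is an equality of $n$-vectors, consistent with the statement of the lemma.
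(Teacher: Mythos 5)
Your proposal is correct and follows essentially the same route as the paper: differentiate the gradient-matching identity (\ref{firstequalitygradients}) and the segment identity (\ref{txieta}) in $t$, substitute $D^2_{xt}u$ for $D^2_{xx}\varphi_0(\xi)\partial_t\xi$ and $D^2_{xx}\varphi_1(\eta)\partial_t\eta$, and invert the positive definite bracketed matrix. The preliminary remark on the smoothness of $\xi,\eta$ matches the paper's discussion preceding Lemma \ref{thirdlemma}, so there is nothing to add.
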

\begin{proof}
	Differentiating (\ref{firstequalitygradients}) in $t$ direction we obtain
	\begin{equation}\label{ae}
	D_{xt}^2u(x,t)=D_t(D_x\varphi_0(\xi(x,t)))=D^2_{xx}\varphi_0(\xi(x,t))D_t\xi=D^2_{xx}\varphi_1(\eta(x,t))D_t\eta.
	\end{equation}

Differentiating in $t$ the formula (\ref{txieta}), similarly to the previous lemma we obtain
$$0=\eta-\xi+(1-t)D_t\xi+tD_t\eta$$
$$=\eta-\xi+(1-t)(D^2_{xx}\varphi_0(\xi))^{-1}D^2_{xx}\varphi_0(\xi)D_t\xi+t(D^2_{xx}\varphi_1(\eta))^{-1}D^2_{xx}\varphi_1(\eta)D_x\eta$$
$$=\eta-\xi+\big[(1-t)(D^2\varphi_0)^{-1}(\xi)+t(D^2\varphi_1)^{-1}(\eta))\big]\times D^2_{xt}u(x,t),$$
where (\ref{ae}) was used in the last equality. This proves the lemma.

\end{proof}

Finally, coupling Lemma \ref{thirdlemma}, Lemma \ref{xt} and the geodesic equation we obtain the formula for the second derivative in time direction.
\begin{cor}\label{tt}
		Let $(u_t)=u(x,t)$  be a geodesic path connecting $\varphi_0$ and $\varphi_1$ in  $\tilde{\mathcal{S}}$. If $$\partial \varphi_0(U)=\partial \varphi_1(U)$$
	then
	$$D^2_{tt}u(x,t)=(\xi-\eta)^T\big[(1-t)(D^2\varphi_0)^{-1}(\xi)+t(D^2\varphi_1)^{-1}(\eta)\big]^{-1}(\xi-\eta).$$
\end{cor}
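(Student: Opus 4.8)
The plan is to extract the identity from the degeneracy of the full space--time Hessian via a Schur complement computation, using Lemmas \ref{thirdlemma} and \ref{xt} to identify the spatial and mixed blocks. Write the Hessian in block form
\[
D^2_{x,t}u=\begin{pmatrix} D^2_{xx}u & D^2_{xt}u\\ (D^2_{xt}u)^T & D^2_{tt}u\end{pmatrix},
\]
and abbreviate $H:=(1-t)(D^2\varphi_0)^{-1}(\xi)+t(D^2\varphi_1)^{-1}(\eta)$, which is positive definite, being a convex combination of inverses of the positive definite matrices $D^2_{xx}\varphi_0(\xi)$ and $D^2_{xx}\varphi_1(\eta)$. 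By Lemma \ref{thirdlemma} we then have $D^2_{xx}u=H^{-1}$ — in particular the spatial block is invertible — and by Lemma \ref{xt} we have $D^2_{xt}u=H^{-1}(\xi-\eta)$. All these derivatives exist in the classical sense because $\xi$ and $\eta$ are smooth on $U\times(0,1)$ (the analogue of Lemma 1.7 of \cite{AD21}) and $\varphi_0,\varphi_1\in C^2$.

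Since $D^2_{xx}u$ is invertible, the block-determinant formula gives
\[
0=\det(D^2_{x,t}u)=\det(D^2_{xx}u)\cdot\big(D^2_{tt}u-(D^2_{xt}u)^T(D^2_{xx}u)^{-1}D^2_{xt}u\big).
\]
The second factor is a scalar, because the $t$-direction is one-dimensional, so $\det(D^2_{xx}u)>0$ forces it to vanish, i.e. $D^2_{tt}u=(D^2_{xt}u)^T(D^2_{xx}u)^{-1}D^2_{xt}u$. Substituting $D^2_{xx}u=H^{-1}$ and $D^2_{xt}u=H^{-1}(\xi-\eta)$ and using the symmetry of $H$ yields
\[
D^2_{tt}u=\big(H^{-1}(\xi-\eta)\big)^T H\big(H^{-1}(\xi-\eta)\big)=(\xi-\eta)^T H^{-1}(\xi-\eta),
\]
which is exactly the claimed formula.

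The only point requiring care is the passage from $\det(D^2_{x,t}u)=0$ to the vanishing of the Schur complement itself: this is legitimate precisely because the complementary block is $1\times 1$ (in higher base dimension one would only get that the Schur complement is singular). Apart from this I expect no real obstacle; if one prefers to avoid invoking smoothness of $\xi,\eta$, one can establish the identity on the (dense, full-measure) set where $u$ is twice differentiable and then pass to the whole of $U\times(0,1)$ by continuity of both sides, using the $C^{1,1}_{\mathrm{loc}}$ regularity from Theorem \ref{main}.
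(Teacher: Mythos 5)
Your proof is correct and takes essentially the same route as the paper: the paper obtains $D^2_{tt}u$ precisely by coupling Lemma \ref{thirdlemma}, Lemma \ref{xt} and the geodesic equation $\det(D^2_{x,t}u)=0$, which is exactly the Schur-complement computation you spell out (compare formula (\ref{flatt}) in Example \ref{e1}). Your extra remarks on the invertibility of the spatial block and on passing from a.e.\ to everywhere via continuity only make explicit what the paper leaves implicit.
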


We have gathered all the ingredients to conclude the proof of Theorem \ref{main}:
\begin{proof}
	If $\partial\varphi_0(U)\neq \partial\varphi_0(U)$ then $u$ is well-known not to be globally $C^{1,1}$ regular - see Theorem 0.3 in \cite{AD21}. The local $C^{1,1}$ regularity away from the corners will be shown in the next section (just take $\alpha=1$ there). Henceforth we assume $\partial\varphi_0(U)=\partial\varphi_0(U)$.
	
 In	Lemma \ref{thirdlemma}, Lemma \ref{xt} and Corollary \ref{tt} we have computed the Hessian of $u(x,t)$. As $(\xi-\eta)$ is a bounded vector with length bounded by $diam(U)$ the Hessian will be uniformly bounded provided an upper control of $\big[(1-t)(D^2\varphi_0)^{-1}(\xi)+t(D^2\varphi_1)^{-1}(\eta)\big]^{-1}.$  But Proposition \ref{AMHM} implies that the above matrix is majorized by
 $$(1-t)(D^2\varphi_0)(\xi)+t(D^2\varphi_1)(\eta)$$
 and the latter is obviously bounded from the assumptions made on $\varphi_0$ and $\varphi_1$.
\end{proof}

\section{$C^{1,\alpha}$ regularity}
In this section we provide the proof of Theorem \ref{thm2}. The main difference from the corresponding result in \cite{AD21} (see Theorem 0.1 there) is that there we have used the refined test for $C^{1,1}$ boundedness from \cite{CNS86} which is well known to fail for $\alpha\in (0,1)$ - see the remark after Lemma 3.2 in \cite{DPF15}.

Henceforth we shall use the following result (see \cite{DPF15}, Lemma 3.1 or \cite{Fi17}, Lemma A.32.)
\begin{lem}\label{Fig}
	Let $U$ be a convex open set, $C$ and $\rho$ be positive constants, and
	$\alpha\in(0,1]$. Let $u: U\longmapsto \mathbb R$ be a globally Lipschitz convex function such that for every
	$y\in U$  there exists a supporting affine function $l_y$ such that
	$$u(x)-l_y(x)\leq C|x-y|^{1+\alpha}$$
	for all $x\in U\cap B_{\rho}(y)$.
	Then $u\in C^{1,\alpha}(U)$.
	
\end{lem}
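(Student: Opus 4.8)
This lemma is classical --- it is essentially Lemma~3.1 of \cite{DPF15} (resp. Lemma~A.32 of \cite{Fi17}) --- so the plan is to reproduce that argument. It splits into two conceptual steps: first promote the one-sided bound on $u-l_y$ to honest differentiability of $u$ at every point of $U$, and then upgrade it to a direction-free H\"older estimate for $\nabla u$ by testing the convexity inequality against a well-chosen auxiliary point.

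\textbf{Step 1 (pointwise differentiability).} I would write the supporting affine function at $y$ as $l_y(x)=u(y)+p_y\cdot(x-y)$ with $p_y$ in the subdifferential $\partial u(y)$. Since $l_y$ is supporting, $u(x)-l_y(x)\ge 0$, so the hypothesis reads $0\le u(x)-u(y)-p_y\cdot(x-y)\le C|x-y|^{1+\alpha}$ on $U\cap B_\rho(y)$. As $1+\alpha>1$ the right-hand side is $o(|x-y|)$, hence $u$ is differentiable at $y$ with $\nabla u(y)=p_y$; in particular $\partial u(y)$ is a singleton for every $y\in U$ and $\nabla u$ is defined throughout $U$.

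\textbf{Step 2 (H\"older control of $\nabla u$).} Next I would fix $y,z\in U$, set $h:=|z-y|$ small, pick an arbitrary unit vector $v$, and consider the auxiliary point $w:=z+hv$ (assuming for the moment $w\in U\cap B_\rho(y)$). The supporting inequality at $z$ gives $u(w)\ge u(z)+h\,\nabla u(z)\cdot v$, while the hypothesis at $y$ gives $u(w)\le u(y)+\nabla u(y)\cdot(w-y)+C|w-y|^{1+\alpha}$. Subtracting, using $w-y=(z-y)+hv$ and discarding the nonnegative term $u(z)-u(y)-\nabla u(y)\cdot(z-y)$ (convexity), I would obtain
$$h\,\bigl(\nabla u(z)-\nabla u(y)\bigr)\cdot v\ \le\ C|w-y|^{1+\alpha}\ \le\ C(2h)^{1+\alpha}.$$
Dividing by $h$ and choosing $v$ in the direction of $\nabla u(z)-\nabla u(y)$ yields $|\nabla u(z)-\nabla u(y)|\le 2^{1+\alpha}C|z-y|^{\alpha}$, i.e. $\nabla u\in C^{0,\alpha}$ and hence $u\in C^{1,\alpha}$.

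\textbf{Step 3 (admissibility of $w$) and main obstacle.} Finally I would record that $w\in B_\rho(y)$ once $2h<\rho$, and $w\in U$ once $h<\dist(z,\partial U)$; thus on any $U'\Subset U$ the estimate of Step~2 holds for all $y,z\in U'$ with $|z-y|$ below a fixed threshold, while for larger separations the assumed global Lipschitz bound already gives the H\"older inequality after enlarging the constant. Letting $U'$ exhaust $U$ gives $u\in C^{1,\alpha}_{loc}(U)$, and the full statement $u\in C^{1,\alpha}(U)$ follows by the same test-point computation using convexity of $U$ to keep $w$ inside, exactly as in \cite{DPF15,Fi17}. The only substantive point is Step~2: convexity plus the hypothesis by themselves control only the component of $\nabla u(z)-\nabla u(y)$ along $z-y$, and it is the free choice of the transverse direction $v$, together with the scale $r\sim h$ that balances the gain $h^{1+\alpha}$ against the factor $1/h$ lost in dividing, that converts this into control of the whole gradient increment; everything else is boundary bookkeeping, which is why in the application (Theorem~\ref{thm2}) only the interior version is invoked, away from the corners $\partial U\times\{0,1\}$.
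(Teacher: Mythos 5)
Your Steps 1 and 2 reproduce, correctly, the classical argument that the paper itself only cites (\cite{DPF15}, Lemma 3.1; \cite{Fi17}, Lemma A.32): the one-sided bound of order $|x-y|^{1+\alpha}$ forces differentiability at every point, and testing the supporting inequality at $z$ against the hypothesis at $y$ at the auxiliary point $w=z+hv$, $h=|z-y|$, with $v$ chosen in the direction of $\nabla u(z)-\nabla u(y)$, controls the whole gradient increment. Together with your Step 3 bookkeeping this yields $u\in C^{1,\alpha}_{loc}(U)$ with constants depending on $\dist(\cdot,\partial U)$, and that part is complete.

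The gap is the last claim, that the full statement $u\in C^{1,\alpha}(U)$ "follows by the same test-point computation using convexity of $U$ to keep $w$ inside", together with the remark that only the interior version is needed for Theorem \ref{thm2}. Neither is right. Convexity of $U$ does not keep $w=z+hv$ inside $U$: if $z$ lies at distance much smaller than $h$ from a face of $\partial U$ and the required direction $v$ points outward, then $w\notin U$, and Step 2 cannot be run at all; so the boundary case needs a genuinely different placement of test points, not the same computation. And the boundary case cannot be dropped here, because the paper uses Lemma \ref{Fig} on the convex sets $V_2$ to conclude that the $C^{1,\alpha}$ norm can blow up only at the corners $\partial U\times\{0,1\}$, i.e.\ the estimate is needed up to $\partial U\times(0,1)$ and up to $U\times\{0,1\}$ (this is exactly why the hypothesis is verified there for comparison points in all of $U\times(0,1)\cap B_\rho$). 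A workable repair: besides your inequality $q\cdot(w-z)\le C|w-y|^{1+\alpha}$ (where $q=\nabla u(z)-\nabla u(y)$), the symmetric combination (supporting plane at $y$, hypothesis at $z$) plus the already established bound $0\le q\cdot(z-y)\le Ch^{1+\alpha}$ gives a two-sided estimate $|q\cdot(w-z)|\le C'(h+|w-z|)^{1+\alpha}$ for every admissible $w$; since the convex set $U$ contains the cone over a fixed interior ball with vertex $z$, it contains a ball $B_{c_0h}(m)$ with $|m-z|\le h$, and testing at the pair $w_\pm=m\pm c_0h\,q/|q|$ yields $2c_0h|q|=q\cdot(w_+-w_-)\le C''h^{1+\alpha}$, hence $|q|\le C''c_0^{-1}h^{\alpha}$, with a constant now depending also on the inner radius and diameter of $U$ (the global Lipschitz bound disposes of $h$ bounded below). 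Some device of this kind is what the cited proofs supply and what your write-up is missing; as written, the up-to-the-boundary conclusion is asserted but not proved.
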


Fix now two neighborhoods $W_1\Subset W_2$ of $\partial U\times\{0,1\}$, such that both
$$V_i:=U\times(0,1)\setminus \overline{W_i},\ i=1,2$$
are convex. This choice implies that there is $\delta>0$ such that for any $(y,s)\in W_1,\ (x,t)\in V_2$ one has
$$|(y,s)-(x,t)|\geq\delta.$$
With the aid of Lemmas \ref{Fig} and \ref{AbjDin}, Theorem \ref{thm2} follows from the following bound: there is a $\rho>0$ dependent on $U$ and $\delta$, such that for every point $(\overline{x},\overline{t})$ in $V_2$
and any $(x,t)\in U\times(0,1)$ such that 
$|(\overline{x},\overline{t})-(x,t)|<\rho$ 
one has
\begin{equation}\label{2}
	|u(x,t)-u((\overline{x},\overline{t}))-(x-\overline{x}).D_xu((\overline{x},\overline{t}))-(t-\overline{t})D_tu((\overline{x},\overline{t}))|
\end{equation}
\begin{equation*}\leq C|(x-\overline{x},t-\overline{t})|^{1+\alpha}
\end{equation*}
for a constant $C$ depending on $V_2$, $\delta$, $\rho$ and the $C^{1,\alpha}$ norm of $\varphi_0$,$\varphi_1$ (but independent on $(x,s)$).

\begin{proof}[Proof of inequality (\ref{2})]
We follow the reasoning from \cite{AD21}. We fix $(\bar{x},\bar{t})\in V_2$. After possibly subtracting an affine support function $l_{\bar{x},\bar{t}}(x,t)$ we can suppose 
	$$ u \geq 0,\;\;  u(\bar{x},\bar{t})=0 \; \text{and}\; D u(\bar{x},\bar{t})=0.$$
	The problem (\ref{ss}) for this new function $u$ becomes:
	\begin{equation}\label{sss}
		\begin{cases}
			\det(D^2 _{x,t}u)=0\;in \;\; U\times (0,1),\\
			u=\varphi_0-l_{\bar{x},\bar{t}}\;\; on \;\; U\times \{0\},\\
			u=\varphi_1-l_{\bar{x},\bar{t}}\;\; on \;\; U\times \{1\},\\
			u=-l_{\bar{x},\bar{t}}\;\; on \;\; \partial U\times (0,1).
		\end{cases}
	\end{equation}
	The inequality we need to prove reads
	\begin{equation}\label{am}
		u(x,t) \leq C|(x-\bar{x},t-\bar{t})|^{1+\alpha}
	\end{equation}
	for all $(x,t)$ in $U\times(0,1)$ which are at distance less than $\rho$  from $(\bar{x},\bar{t})$ for some constant $\rho$ to be determined later on.

	By Lemma \ref{AbjDin} we assume that $(\bar{x},\bar{t})$ is on the line segment $[(x^0,0),(x^1,1)]$ on which $u$ vanishes. Switching the role of the end points, if necessary, that $(\bar{x},\bar{t})$ is closer to $(x^1,1)$ than to $(x^0,0)$ (i.e. we assume $\bar{t}\geq\frac12$).
	
	{\bf Claim}: there is a constant $\rho>0$ with the following property: unless both $(x^0,0),\ (x^1,1)$ belong to $W_1$ then at least one of the rays $[(x^0,0),(x,t))$, $[(x^1,1),(x,t))$ strikes the boundary of $U\times(0,1)$ at $\overline{U}\times\{0,1\}$.
	
	Indeed, if $(x^1,1)\in \overline{U}\times\{1\}\setminus \overline{W_1}$, then choose the ray $[(x^0,0),(x,t))$ and (exploiting the fact that $\hat{t}\geq\frac12$) the existence of $\rho$ follows from continuity argument. If in turn $(x^1,1)\in W_1$ but $(x^1,1)\in \overline{U}\times\{1\}\setminus \overline{W_1}$ the same argument  applies for the ray $[(x^1,1),(x,t))$  except now one uses the fact that $|(\overline{x},\overline{t})-(x,t)|\geq\delta$ instead of $\overline{t}\geq\frac12$.
	
	Consider now a ball in $U\times (0,1)$ with center $(\bar{x},\bar{t})$ and radius $\rho$. For any $(x,t)\in U\times(0,1)$ in the $\rho$- ball let $(\hat{x},\hat{t})$ be the point  where the ray from $(x^0,t^0)$ to $(x,t)$ strikes the boundary of $U\times(0,1)$. Let also $(\tilde{x},\tilde{t})$ be the point  where the ray from $(x^1,t^1)$ to $(x,t)$ strikes the boundary of $U\times(0,1)$
	
	Three cases may occur.

	Suppose first that $\hat{t}=1$ i.e. the point $(\hat{x},\hat{t})$ is on $\overline{U}\times\{1\}$. In this case the argument from \cite{AD21} applies mutatis mutandis. We recall the details for the sake of completeness. 
	
	 If
	$(x,t)=t(\hat{x},1)+(1-t)(x^1,0)$ by convexity of $u$ and $u(x^0,0)=0$, we have 
	$$u(x,t) \leq t\tilde{\varphi}_1(\hat{x})+(1-t)u(x^0,0)\leq \tilde{\varphi}_1(\hat{x}),$$
	where $\tilde{\varphi}_1=\varphi_1-l_{\bar{x},\bar{t}}$. Note that $x^1$ is a local minimum point for $\tilde{\varphi}$. As furthermore $\varphi_1\in\tilde{\mathcal S}^{1,\alpha}$ we have
	$$ \tilde{\varphi}_1(\hat{x})-\tilde{\varphi}_1(x^1)\leq C|\hat{x}-x^1|^{1+\alpha}$$
	for a constant $C$ dependent on the $C^{1,\alpha}$ norm of $\varphi_1$ and the global Lipschitz norm of $u$.

	It thus suffices to  prove that 
	\begin{equation}\label{needed}
		|\hat{x}-x^1|\leq C(|(x-\bar{x},t-\bar{t})|,
	\end{equation}
	for some $C$. To this end we consider the plane $\Pi$ spanned by $(x^0,0),(x^1,1)$ and $(x,t)$ (note that if these three points are co-linear then the estimate is trivial). In $\Pi\cap\big[ U\times(0,1)\big]$  we have 
	$$\frac{|\hat{x}-x^1|}{|(x^0,t^0)-(x^1,t^1)|}=\frac{\sin(\beta)}{\sin(\alpha\pm\beta)}
	$$
	and 
	$$\frac{|(\bar{x},\bar{t})-(x^0,t^0)|}{|(x,t)-(\bar{x},\bar{t})|}=\frac{\sin(\theta)}{\sin(\beta)},$$
	where $\alpha$  is the angle between the line $((x^1,1),(x^0,0))$ and the line $\{t=1\}\cap\Pi$, $\beta$ is the angle between $((x^0,0),(x^1,1))$ and $((x^0,0),(\hat{x},1))$
	and $\theta$ is the angle between $((\bar{x},\bar{t}),(x,t))$ and $((x^0,0),(\hat{x},1))$. From these two equations we obtain 
	$$\frac{|\hat{x}-x^1|}{|(x,t)-(\bar{x},\bar{t})|}=\frac{\sin(\theta)}
	{\sin(\alpha\pm\beta)}\frac{|(x^0,0)-(x^1,1)|}{|(\bar{x},\bar{t})-(x^0,0)|}.$$
	
	Note that the angle $\alpha\pm\beta$ is uniformly bounded from below by a constant dependent on $diam(U)$. Also, trivially, $sin(\theta)\leq 1$.
	
	In order to bound the ratio $$\frac{|(x^0,0)-(x^1,1)|}{|(\bar{x},\bar{t})-(x^0,0)|}$$ just observe that $|(x^0,0)-(x^1,1)|\leq\sqrt{diam(U)^2+1}$, while $|(\bar{x},\bar{t})-(x^0,0)|\geq|\bar{t}|\geq\frac12.$ Thus the estimate (\ref{needed}) is proven in this case.
	
	Assume now $\hat{t}\neq1$ which implies that $(x^1,1)\in W_1$.
	
	If, in turn, $\tilde{t}=0$ i.e. $(\tilde{x},\tilde{t})\in\overline U\times\{0\}$ we apply similar idea. We need an analogue of (\ref{needed}) i.e.
	\begin{equation}\label{needed2}
		|\tilde{x}-x^0|\leq C(|(x-\bar{x},t-\bar{t})|.
	\end{equation} 
Arguing as above this boils down to establishing a bound from above on $\frac{|(x^0,0)-(x^1,1)|}{|(\bar{x},\bar{t})-(x^1,1)|}$. Once again $|(x^0,0)-(x^1,1)|\leq\sqrt{diam(U)^2+1}$ but for the lower bound of $|(\bar{x},\bar{t})-(x^1,1)|$ we exploit that now $(x^1,1)\in W_1$ and hence $|(\bar{x},\bar{t})-(x^1,1)|\geq\delta$.

Finally it remains to analyze the case when both $\tilde{t}\neq 0$ and $\hat{t}\neq1$. In this case both points $(x^0,0)$ and $(x^1,1)$ belong to $W_1$. Once again consider the plane $\Pi$ spanned by $(x^0,0),(x^1,1)$ and $(x,t)$ and let $(\overline{x},1), (\underline{x},0)$ be the vertices of the rectangle $\Pi\cap\overline{U\times(0,1)}$ that are close to $(x^1,1)$ and $(x^0,0)$, respectively.
Note that, by convexity
\begin{align*}
	&u(x,t)\leq \frac t{\hat{t}}u(\hat{x},\hat{t})+(1-\frac t{\hat{t}})u(x^0,0)\\
	&= \frac t{\hat{t}}\big( \hat{t}u(\overline{x},1)+(1-\hat{t})u(\underline{x},0)\big),
	\end{align*}
where we have used the linearity of $u$ along $\partial U\times[0,1]$.

As $u(\overline{x},1)\leq C|\overline{x}-x^1|^{1+\alpha}$ and $u(\underline{x},0)\leq C|\underline{x}-x^0|^{1+\alpha}$ it suffices to bound the ratios
$$\frac{|\overline{x}-x^1|}{|(x,t)-(\bar{x},\bar{t})|},\ \frac{|\underline{x}-x^0|}{|(x,t)-(\bar{x},\bar{t})|}.$$	

Note however that if $(p,1)$ is the intersection point of the ray $[(x^0,0),(x,t))$ with $\{t=1\}$, while $(q,0)$ is the intersection point of the ray $[(x^1,1),(x,t))$ with $\{t=0\}$ one obviously has 
$$|\overline{x}-x^1|\leq|p-x^1|,\ |\underline{x}-x^0|\leq |q-1|,$$
whereas the ratios 
$$\frac{|p-x^1|}{|(x,t)-(\bar{x},\bar{t})|},\ \frac{|q-x^0|}{|(x,t)-(\bar{x},\bar{t})|}$$
can be bounded in exactly the same way as in the previous two cases (note that it does not matter that $p$ and $q$ stick out of $\Pi\cap\overline{U\times(0,1)}$). This finishes the proof.

\end{proof}
\begin{rem} Just as in \cite{AD21} the obtained bound is stronger than purely interior one. It implies that the blow-up of the $C^{1,\alpha}$ norm can occur only at the wedge $\partial U\times\{0,1\}$.
\end{rem}
\section{Toric plurisubharmonic setting}
As we already explained toric plurisubharmonic functions can be written as the composition of a convex function with a logarithmic map. This allows us to derive several regularity results concerning geodesics connecting two toric plurisubharmonic functions. 

Recall that given a bounded Reinhardt domain $\Omega\subset \mathbb{C}^n$ we consider the space $\mathcal H$ given by
\begin{equation}\label{defnh}
\mathcal H=\lbrace \phi\in C^{\infty}(\Omega)\cap C(\overline{\Omega})|\ \phi=0\ {\rm on}\ \partial \Omega,||\phi||_{C^2}<\infty,\ D^2_{z\bar{z}}\phi>0\ {\rm}\ in\ \Omega\rbrace,
\end{equation}
with $D^2_{z\bar{z}}\phi$ denoting the complex Hessian $\frac{\partial^2\phi}{\partial z_j\partial{\overline{z}_k}}$.

 From now on we restrict attention to $\tilde{\mathcal H}$ consisting of {\it toric} plurisubharmonic functions in $\mathcal H$. i.e. those for which $\phi(z)=\phi(z')$, whenever $|z_j|=|z_j'|,\ j=1,\cdots,n$. For such functions, as already explained the function $u(x):=\phi(e^{x_1},\cdots,e^{x_n})$ is convex on $U$ - the logarithmic image of $\Omega$.

The next lemma links the plurisubharmonicity of $\phi$ with the convexity of $u$ in a quantitive way.
\begin{lem}\label{convtor}Let $\varphi$ be a toric plurisubharmonic function from $\tilde{\mathcal H}$ and $u$ be as above. Then 
	$$\varphi(z_1,z_2,...,z_n)=u(\log|z_1|,\log|z_2|,,,\log|z_n|)=:u(x),$$
	and hence
	\begin{equation*}
		\frac{\partial^2 \varphi}{\partial z_i\partial \bar{z}_j}(z)=\frac{\partial}{\partial z_i}\left(\frac{\partial u}{\partial x_j}\frac{1}{2\overline{z}_j}\right)\\
		=\frac{\partial^2 u}{\partial x_i\partial x_j }(x)\frac{1}{ 4z_i\overline{z}_j}
	\end{equation*}

\end{lem}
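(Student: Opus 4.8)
The plan is to prove both displayed identities by a direct Wirtinger-calculus computation, carried out on the open set $\{z_1\cdots z_n\neq 0\}$ where the logarithmic map is smooth; on $\Omega\cap M$ the left-hand side is not even defined, so nothing is claimed there. The first identity $\varphi(z)=u(\log|z_1|,\dots,\log|z_n|)$ is merely the definition of $u$ combined with the toric invariance of $\varphi$, already recorded above, so the actual content is the computation of the complex Hessian.

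First I would record the derivatives of the logarithmic coordinates: on a simply connected piece of $\{z_j\neq 0\}$ pick a branch of $\log z_j$ and write $x_j=\log|z_j|=\tfrac12(\log z_j+\log\bar z_j)$, so that
\[
\frac{\partial x_k}{\partial z_i}=\frac{\delta_{ik}}{2z_i},\qquad \frac{\partial x_k}{\partial \bar z_i}=\frac{\delta_{ik}}{2\bar z_i}.
\]
Applying the chain rule to $\varphi(z)=u(x(z))$ gives
\[
\frac{\partial\varphi}{\partial\bar z_j}(z)=\sum_{k}\frac{\partial u}{\partial x_k}(x)\,\frac{\partial x_k}{\partial\bar z_j}=\frac{\partial u}{\partial x_j}(x)\,\frac{1}{2\bar z_j},
\]
which is exactly the middle term of the asserted formula. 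Then I would differentiate once more in $z_i$: since $1/(2\bar z_j)$ depends only on the anti-holomorphic variables, $\partial/\partial z_i$ annihilates it, so by the Leibniz rule only one term survives, and using the expression for $\partial x_k/\partial z_i$ once more,
\[
\frac{\partial^2\varphi}{\partial z_i\partial\bar z_j}(z)=\frac{1}{2\bar z_j}\,\frac{\partial}{\partial z_i}\!\Big(\frac{\partial u}{\partial x_j}(x)\Big)=\frac{1}{2\bar z_j}\sum_{k}\frac{\partial^2 u}{\partial x_k\partial x_j}(x)\,\frac{\partial x_k}{\partial z_i}=\frac{\partial^2 u}{\partial x_i\partial x_j}(x)\,\frac{1}{4z_i\bar z_j},
\]
which is the claimed identity.

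For the quantitative content behind the lemma's phrasing, I would note that the formula reads $D^2_{z\bar z}\varphi(z)=B^{*}\,D^2u(x)\,B$ with $B=\mathrm{diag}\big(\tfrac1{2\bar z_1},\dots,\tfrac1{2\bar z_n}\big)$ an invertible complex diagonal matrix; hence $D^2_{z\bar z}\varphi(z)>0$ if and only if $D^2u(x)>0$, with the eigenvalues comparable up to the positive factors $|2z_j|^{-2}$. There is no real obstacle here: the only points needing a word of care are the local choice of a branch of the logarithm (immaterial, since only the moduli $|z_j|$ enter $u$) and keeping the Wirtinger bookkeeping straight; everything else is the chain rule together with the fact that $\partial/\partial z_i$ kills antiholomorphic functions.
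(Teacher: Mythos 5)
Your computation is correct and is essentially the paper's own argument: the lemma is proved by exactly this Wirtinger-calculus chain rule, using $\partial x_k/\partial\bar z_j=\delta_{kj}/(2\bar z_j)$ and $\partial x_k/\partial z_i=\delta_{ki}/(2z_i)$ away from $\{z_iz_j=0\}$, yielding $\frac{\partial^2\varphi}{\partial z_i\partial\bar z_j}=\frac{\partial^2 u}{\partial x_i\partial x_j}\frac{1}{4z_i\bar z_j}$. The extra remarks (branch choice is immaterial, positivity of the two Hessians is equivalent via the diagonal conjugation) are consistent with how the paper uses the formula.
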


As a corollary of the above lemma note that $\frac{\partial^2 u}{\partial x_i\partial x_j }(x)\frac{1}{ 4z_i\overline{z}_j}$ extends smoothly past \newline $\lbrace z_iz_j=0\rbrace\cap\Omega$. The lemma can also be restated as
\begin{equation}\label{torconv}
\frac{\partial^2 u}{\partial x_i\partial x_j }(x)=\frac{\partial^2 \varphi}{\partial z_i\partial \bar{z}_j}(e^x)4e^{x_i}e^{x_j},
\end{equation}
where we used the suggestive notation $e^x=(e^{x_1},\cdots,e^{x_n})$.

Next proposition is essentially contained in \cite{Ras17}. We refer to \cite{AD21} for more details on the topic.

\begin{prop}\label{geodgeod} Let $\Omega$ be a bounded Reinhardt domain in $\mathbb C^n$ with $U$ - its  logarithmic image. Let also $\varphi$ be a weak geodesic joining $\varphi_0$ and $\varphi_1$ from $\tilde{\mathcal H}$. If $u_0(x):=\varphi_0(e^x),\ u_1(x):=\varphi_1(e^x)$, then 
	$u(x,t):=\varphi(e^x,e^t)$ is a weak geodesic joining $u_0,\ u_1\in \tilde{\mathcal S}$.
\end{prop}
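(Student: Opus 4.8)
The plan is to use the correspondence from Lemma \ref{toric} to transfer the homogeneous complex Monge-Amp\`ere equation on $\Omega\times A$ to the homogeneous real Monge-Amp\`ere equation on $U\times(0,1)$, together with uniqueness of solutions on both sides. First I would observe that since $\varphi$ is a weak geodesic joining $\varphi_0,\varphi_1\in\tilde{\mathcal H}$, and the boundary data are toric in the space variables, the solution $\varphi$ is itself toric in the $z$-variables: this follows from uniqueness of the solution to (\ref{MAcomplex}), because any rotation $(z,\zeta)\mapsto(e^{i\theta_1}z_1,\dots,e^{i\theta_n}z_n,\zeta)$ of a solution is again a solution with the same boundary data (the domain $\Omega\times A$ is invariant and $\varphi_0,\varphi_1$ are toric). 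Hence $\varphi(z,\zeta)=\varphi(|z_1|,\dots,|z_n|,\zeta)$, and in fact the same rotational invariance in $\zeta$ shows $\varphi$ depends on $\zeta$ only through $|\zeta|$, so that $u(x,t):=\varphi(e^{x},e^{t})$ is well defined on $U\times(0,1)$.

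Next I would apply Lemma \ref{toric} in the joint variables: regard $\Omega\times A\subset\mathbb C^{n+1}$ as a Reinhardt domain whose logarithmic image is $U\times(0,1)$, and note that $\varphi$ being a bounded toric plurisubharmonic function on $\Omega\times A$ implies $u$ is a bounded convex function on $U\times(0,1)$. By part (3) of Lemma \ref{toric}, the identity $(dd^c_{z,\zeta}\varphi)^{n+1}=0$ in the weak sense is equivalent to $\det(D^2_{x,t}u)=0$ in the weak sense on $U\times(0,1)$. The boundary conditions also transfer: $\varphi=\varphi_0$ on $\Omega\times\{|z|=1\}$ becomes $u=u_0$ on $U\times\{0\}$ (since $t=\log|\zeta|=0$ there), $\varphi=\varphi_1$ on $\Omega\times\{|z|=e\}$ becomes $u=u_1$ on $U\times\{1\}$, and $\varphi=0$ on $\partial\Omega\times A$ becomes $u=0$ on $\partial U\times(0,1)$ after checking that the logarithmic image of $\partial\Omega$ is $\partial U$. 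Finally I would record that $u_0,u_1$ indeed lie in $\tilde{\mathcal S}$: by (\ref{torconv}) one has $\frac{\partial^2 u_j}{\partial x_i\partial x_k}(x)=4e^{x_i}e^{x_k}\frac{\partial^2\varphi_j}{\partial z_i\partial\bar z_k}(e^x)$, so positivity of the complex Hessian of $\varphi_j$ on $\Omega$ gives positivity of $D^2u_j$ on $U$ (the factor matrix $(e^{x_i}e^{x_k})$ is a rank-one positive multiplier acting by congruence, preserving positive definiteness since all $e^{x_i}>0$), the $C^2$ bound on $\varphi_j$ plus the boundedness of $U$ give $\|u_j\|_{C^2}<\infty$, and $u_j=0$ on $\partial U$ comes from $\varphi_j=0$ on $\partial\Omega$; smoothness and continuity up to the boundary are inherited analogously. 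Thus $u$ solves (\ref{ss}) with $u_0,u_1\in\tilde{\mathcal S}$, and by uniqueness of the convex generalized solution it is \emph{the} weak geodesic joining $u_0,u_1$.

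The main obstacle I anticipate is not any single computation but the bookkeeping near the degeneracy locus of the logarithmic map, namely $(\Omega\cap M)\times A$ where $M=\{z_1\cdots z_n=0\}$, and near $\partial\Omega$: one must make sure that the weak-equation equivalence in Lemma \ref{toric}(3) is applied correctly in the singular setting (which is why the lemma is explicitly stated for bounded singular $u$ and $\varphi$), and that "logarithmic image of $\partial\Omega$ equals $\partial U$" is justified for a general bounded Reinhardt domain rather than taken for granted. A secondary point to be careful about is that the rotational-invariance argument for $\varphi$ requires uniqueness of the weak solution to (\ref{MAcomplex}), which holds by the standard comparison principle for the complex Monge-Amp\`ere operator on bounded pseudoconvex domains with continuous boundary data (cf. \cite{Gu02,Kol05}); I would cite this rather than reprove it. Everything else is routine once these two points are pinned down.
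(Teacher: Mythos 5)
Your argument is correct and is essentially the one the paper intends: the paper gives no proof here, deferring to \cite{Ras17} and \cite{AD21}, and those references proceed exactly as you do — toric invariance of $\phi$ via uniqueness (comparison principle) for (\ref{MAcomplex}), then the logarithmic correspondence of Lemma \ref{toric} applied in the joint variables $(z,\zeta)$ to transfer the homogeneous equation, the boundary data, and the membership $u_0,u_1\in\tilde{\mathcal S}$. The two caveats you flag (the weak-sense equivalence in Lemma \ref{toric}(3) and the identification of finite boundary points of $U$ with points of $\partial\Omega\setminus M$) are handled exactly as you suggest, so no gap remains.
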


This identification justifies the proof of Proposition \ref{pq} (see also Corollary 0.2 in \cite{AD21}) through the already proven regularity for geodesics in $\tilde{\mathcal S}$ and the logarithmic map. The major problem that occurs in this approach is the obvious degeneracy of the logarithmic map along the set $M=\lbrace z_1\cdots z_n=0\rbrace$. Henceforth we shall assume that $\Omega\cap M$ is not empty for otherwise the regularity analysis is finished. Note that $\Omega\cap M\neq \emptyset$ implies that $U$ is unbounded, which as we shall see below additionally complicates computations.

We are ready to state the main result of this section:

\begin{thm}\label{maintoric}
	Let $\Omega$ be a bounded Reinhardt domain in $\mathbb C^n$. Let also $\phi$ be a weak geodesic joining $\varphi_0$ and $\varphi_1$ from $\tilde{\mathcal H}$. Then $\varphi$ is $C_{loc}^{1,1}$ regular in the spatial directions and the blow-up of the $C^{1,1}$ norm can occur only at the corner $\partial\Omega\times\partial A$. If $\Omega$ is complete Reinhardt then the $C_{loc}^{1,1}$ bound holds in all directions. Furthermore $\varphi$ is globally $C^{1,1}$ smooth (and strictly plurisubharmonic) in the spatial direction  if and only if $\partial u_0(U)=\partial u_1(U)$.
\end{thm}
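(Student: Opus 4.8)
The plan is to pull everything back to the convex picture. By Lemma~\ref{toric} and Proposition~\ref{geodgeod}, the function $u(x,t):=\varphi(e^x,e^t)$ is the convex geodesic joining $u_0(x):=\varphi_0(e^x)$ and $u_1(x):=\varphi_1(e^x)$ in $\tilde{\mathcal S}$, posed on $U\times(0,1)$ with $U$ the logarithmic image of $\Omega$. Since $\Omega\cap M\neq\emptyset$ the set $U$ is unbounded (it reaches to $-\infty$ exactly along the directions corresponding to $M$), so Theorem~\ref{main} does not immediately give what we want near $M$; however the logarithmic map is a diffeomorphism on $\Omega\setminus M$, so on compact subsets of $(\Omega\setminus M)\times A$ the $C^{1,1}$ estimate for $\varphi$ is equivalent to the one for $u$ and follows from Theorem~\ref{main}. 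Hence the whole point is the behaviour near $(\Omega\cap M)\times A$, where the derivatives of the log map blow up and must be absorbed by a decay of the derivatives of $u$. Everywhere we use the identity \eqref{torconv}, i.e. $\partial^2_{x_ix_j}u=4e^{x_i}e^{x_j}(\varphi)_{z_i\bar z_j}$, together with its analogues for $u_0,u_1$.

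\emph{Spatial regularity and the equivalence with $\partial u_0(U)=\partial u_1(U)$.} Assume first that the gradient images agree; then every vanishing segment has its endpoints $\xi,\eta$ in the interior of $U$, these are smooth functions of $(x,t)$, and Lemmas~\ref{thirdlemma}, \ref{xt} and Corollary~\ref{tt} express the Hessian of $u$ in terms of $D^2_{xx}u_0(\xi)$, $D^2_{xx}u_1(\eta)$ and $\xi-\eta$. By \eqref{torconv} one has $D^2_{xx}u_0(\xi)=\widetilde A$ with $A_{ij}=4(\varphi_0)_{z_i\bar z_j}(e^\xi)$ bounded and weights $e^{\xi_i}$, and similarly $D^2_{xx}u_1(\eta)=\widetilde B$ with weights $e^{\eta_i}$; feeding this into Lemma~\ref{thirdlemma} and invoking the weighted AM--HM inequality (Proposition~\ref{AMHMweight}) gives
$$D^2_{xx}u(x,t)\preceq \widetilde{(1-t)A+tB},\qquad \big[\widetilde{(1-t)A+tB}\big]_{ij}=\big[(1-t)A+tB\big]_{ij}\,s_is_j,$$
where $s_i$ is the harmonic mean of $e^{\xi_i}$ and $e^{\eta_i}$. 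The key observation is that, by \eqref{txieta}, $x=(1-t)\xi+t\eta$, so $e^{x_i}$ is precisely the weighted \emph{geometric} mean of $e^{\xi_i},e^{\eta_i}$; since the harmonic mean never exceeds the geometric mean, $s_i\leq e^{x_i}$. Because $(1-t)A+tB$ is bounded we get $0\preceq D^2_{xx}u$ with $\partial^2_{x_ix_i}u\leq Ce^{2x_i}$, hence $|\partial^2_{x_ix_j}u|\leq\sqrt{\partial^2_{x_ix_i}u\,\partial^2_{x_jx_j}u}\leq Ce^{x_i}e^{x_j}$, and therefore $|(\varphi)_{z_i\bar z_j}|=|\partial^2_{x_ix_j}u|/(4e^{x_i}e^{x_j})\leq C$ on all of $\Omega\times A$. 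For the purely holomorphic spatial derivatives of $\varphi$ one picks up in addition a first-order term $\partial_{x_i}u\cdot(2z_i^2)^{-1}$; but $\partial_{x_i}u(x,t)=\partial_{x_i}u_0(\xi)=\partial_{x_i}u_1(\eta)$ by \eqref{firstequalitygradients}, and smoothness together with toric invariance of $\varphi_0,\varphi_1$ force $\partial_{x_i}u_0(\xi)=O(e^{2\xi_i})$ and $\partial_{x_i}u_1(\eta)=O(e^{2\eta_i})$, so this term is $O(\min(e^{2\xi_i},e^{2\eta_i}))=O(e^{2x_i})$ and the singular factor is cancelled. Thus the entire spatial real Hessian of $\varphi$ is globally bounded, and $\varphi$ is strictly plurisubharmonic there because $D^2_{xx}u$ is the inverse of a positive definite matrix. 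Conversely, if the gradient images differ, some vanishing segment ends at a corner of $U\times\{0,1\}$ (i.e. with $\xi$ or $\eta$ on $\partial U$), so \eqref{firstequalitygradients} breaks there and, exactly as in Example~\ref{realexample}, $D_xu$ — hence $D_z\varphi$ — jumps; under the (non-degenerate) log map this corner sits on $\partial\Omega\times\partial A$, so $\varphi$ is not globally $C^{1,1}$ in the spatial direction. The purely local spatial $C^{1,1}$ statement for an arbitrary bounded $\Omega$ follows in the same way: away from $\partial U\times\{0,1\}$ the endpoints $\xi,\eta$ are still differentiable (cf. Lemma~1.7 of \cite{AD21}, adapted to $\tilde{\mathcal S}$), so the formula of Lemma~\ref{thirdlemma} is available locally and the estimate above bounds the spatial Hessian, leaving $\partial\Omega\times\partial A$ as the only place where it can blow up.

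\emph{All directions when $\Omega$ is complete Reinhardt, and the main obstacle.} Now $\zeta$ ranges over the annulus $A$, where there is no degeneracy, so it suffices to bound $\partial^2_{x_it}u$ with a gain $e^{x_i}$ and $\partial^2_{tt}u$. Writing $P:=\big[(1-t)\widetilde A^{-1}+t\widetilde B^{-1}\big]^{-1}\preceq\widetilde{(1-t)A+tB}$, Lemma~\ref{xt} and Corollary~\ref{tt} give $\partial^2_{x_it}u=\big(P(\xi-\eta)\big)_i$ and $\partial^2_{tt}u=\langle\xi-\eta,P(\xi-\eta)\rangle$; Cauchy--Schwarz in the $P$-inner product then yields $|\partial^2_{x_it}u|\leq P_{ii}^{1/2}\,(\partial^2_{tt}u)^{1/2}\leq Ce^{x_i}(\partial^2_{tt}u)^{1/2}$, so everything reduces to a uniform bound for $\partial^2_{tt}u$ away from the corner. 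One cannot simply estimate $\langle\xi-\eta,P(\xi-\eta)\rangle\leq\|P\|\,|\xi-\eta|^2$, because $\xi-\eta$ is genuinely unbounded near $M$ (Example~\ref{3rdexa}); instead one writes $\partial^2_{tt}u\leq\sum_{ij}\big[(1-t)A+tB\big]_{ij}\big(s_i(\xi_i-\eta_i)\big)\big(s_j(\xi_j-\eta_j)\big)$ and must show that completeness of $\Omega$ keeps $e^{x_i}|\xi_i-\eta_i|$ bounded — equivalently, that the \emph{difference} $\xi_i-\eta_i$ stays bounded in the degenerate directions. This is the heart of the matter and the step I expect to be hardest. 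Completeness means $U$ is downward closed, so $u_0,u_1$ are bounded and non-decreasing in each variable; for an index $i$ with $x_i\to-\infty$ the matching $\partial_{x_i}u_0(\xi)=\partial_{x_i}u_1(\eta)$ forces both $\xi_i\to-\infty$ and $\eta_i\to-\infty$, while smoothness of $\varphi_0,\varphi_1$ up to $M$ gives the asymptotics $\partial_{x_i}u_0(\xi)\sim c_0(\widehat\xi)e^{2\xi_i}$ and $\partial_{x_i}u_1(\eta)\sim c_1(\widehat\eta)e^{2\eta_i}$ with $c_0,c_1$ bounded above and below (they are, up to a factor, the diagonal complex Hessians of $\varphi_0,\varphi_1$ on $M$); equating the two then forces $\xi_i-\eta_i=O(1)$. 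Combined with the boundedness of $(1-t)A+tB$ and of $e^{x_i}$ this gives $\partial^2_{tt}u=O(1)$ on compact subsets away from $\partial\Omega\times\partial A$, completing the proof; for an incomplete $\Omega$ this last estimate genuinely fails near $M$, as Example~\ref{3rdexa} shows, which is exactly why the extra hypothesis is needed.
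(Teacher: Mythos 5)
Your proposal follows the paper's own strategy almost step for step: reduction to a neighbourhood of $(\Omega\cap M)\times A$, the formula of Lemma \ref{thirdlemma} combined with the weighted AM--HM inequality (Proposition \ref{AMHMweight}) and the harmonic--geometric mean comparison with $e^{x_i}=e^{(1-t)\xi_i+t\eta_i}$ for the spatial Hessian, and Lemma \ref{xt} together with Corollary \ref{tt} reducing the time directions to the boundedness of $\xi-\eta$ in the complete case. The only real variation is how you get $\xi_i-\eta_i=O(1)$: the paper rewrites the matching $D_{x_k}u_0(\xi)=D_{x_k}u_1(\eta)$ as an integral of the pure second derivative along the downward ray, formulas (\ref{2dertrick})--(\ref{2dertrickbis}), and uses two-sided bounds on the complex Hessians of $\varphi_0,\varphi_1$ on the compact set provided by Step 1, completeness guaranteeing that the ray stays inside it; you instead invoke the asymptotics $\partial_{x_i}u_0(\xi)\asymp e^{2\xi_i}$, $\partial_{x_i}u_1(\eta)\asymp e^{2\eta_i}$ near $M$, coming from smooth dependence of toric functions on $|z_i|^2$. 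After the substitution $\lambda=e^{s}$ these are essentially the same computation, but your version silently uses the (standard, yet here unproved) smoothness in $|z_i|^2$, needs the asymptotics to be made uniform on the compact set, and needs the intermediate claim that $\xi_i\to-\infty$ forces $\eta_i\to-\infty$; the integral identity gives all of this in one stroke. Your extra care with the pure (non-mixed) second derivatives of $\varphi$, using $|\partial_{x_i}u|=O(\min(e^{2\xi_i},e^{2\eta_i}))=O(e^{2x_i})$, is a welcome addition that the paper leaves implicit.

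Two corrections. First, your justification of strict plurisubharmonicity of the spatial slices, ``because $D^2_{xx}u$ is the inverse of a positive definite matrix'', only works off $M$: at points of $\Omega\cap M$ one needs a uniform lower bound on $\partial^2_{x_ix_j}u\,e^{-x_i-x_j}$, which the paper derives in its final step from the lower bound on the weights $e^{\xi_i}/e^{(1-t)\xi_i+t\eta_i}$, i.e. exactly from the boundedness of $\xi-\eta$ that you only establish afterwards; you should reorder and complete that part accordingly. Second, Example \ref{3rdexa} takes place in the unit disc, which is a complete Reinhardt domain; the unboundedness of $\xi-\eta$ there is caused by the failure of strict plurisubharmonicity of $\varphi_1$ at the origin (so $\varphi_1\notin\tilde{\mathcal H}$), not by incompleteness, so it cannot be cited as evidence that the estimate ``genuinely fails'' for incomplete domains --- whether $\xi-\eta$ stays bounded in that situation is precisely the open question posed at the end of the paper.
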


\begin{rem}
It is very likely that under the assumption $\partial u_0(U)=\partial u_1(U)$ $\phi$ is in fact $C^{\infty}$ smooth and hence a classical geodesic in any complete smoothly bounded strictly pseudoconvex Reinhardt domain. This however requires a thorough higher order analysis across the set $\Omega\cap M$. We hope to address this problem in the future.
\end{rem}
\begin{proof}
	The proof will be divided into several steps.
	
	{\bf Step 1} (Reduction to a neighborhood of $M$). Proposition \ref{pq} provides the required regularity except in a neighborhood of $\Omega\cap M$. Fix now a neighborhood $\Theta$ of $\Omega\cap M$ small enough so that the following holds: whenever $z\in \Theta\cap\Omega\setminus M$ and $x=log|z|$ is its logarithmic image, then the line segment joining $(\xi,0)$ and $(\eta,1)$ along which the geodesic $u(x,t)=\varphi(e^x,e^t)$ is affine has both points $\xi$ and $\eta$ in $U$. 
	
	{\bf Step 2} (Reduction to real points) As $\varphi$ is toric both in space and (complex) time variables the full complex  Hessians at points $(z,\zeta)$ and $(z',\zeta')$ for $|z_j|=|z_j'|,\ |\zeta|=|\zeta'|$ are related by
	$$D^2_{z,\zeta}\varphi(z',\zeta')=U^*D^2_{z,\zeta}\varphi(z,\zeta)U$$
	for some $(n+1)\times(n+1)$ unitary diagonal matrix. For this reason if $z\in\Omega\setminus M$ it is sufficient to bound the complex Hessian on {\it real positive} points i.e. for $(z,\zeta)$ such that $Im(\zeta)=Im(z_j)=0,\ Re(\zeta)>0,\ Re(z_j)>0,\ j=1,\cdots,n$. Thus from now on we assume that $(z,\zeta)$ is a real positive point.
	
	{\bf Step 3} (The spatial Hessian). With the aid of Lemma \ref{convtor} and Proposition \ref{torconv} we compute the spatial Hessian
	\begin{equation}\label{zzhess}
		\big(\frac{\partial^2 \varphi}{\partial z_i\partial \bar{z}_j}(z)\big)=\big(\frac{\partial^2 u}{\partial x_i\partial x_j }(x)\frac{1}{ 4z_i\overline{z}_j}\big)
	\end{equation}
	\begin{align*}
		&=\big[\big((1-t)\big(D^2u_0(\xi)\big)^{-1}+t\big(D^2u_1(\eta)\big)^{-1}\big)^{-1}_{ij}\frac{1}{ 4z_i\overline{z}_j}\big]\\
		&=\big[\big((1-t)\big(\frac{\partial^2 \varphi_0}{\partial z_k\partial \bar{z}_l}(e^{\xi})4e^{\xi_k+\xi_l}\big)^{-1}+t\big(\frac{\partial^2 \varphi_1}{\partial z_k\partial \bar{z}_l}(e^{\eta})4e^{\eta_k+\eta_l}\big)^{-1}\big)^{-1}_{ij}\frac{1}{ 4z_i\overline{z}_j}\big]\\
		&=\big[\big((1-t)\big(\frac{\partial^2 \varphi_0}{\partial z_k\partial \bar{z}_l}(e^{\xi})e^{\xi_k+\xi_l}\big)^{-1}+t\big(\frac{\partial^2 \varphi_1}{\partial z_k\partial \bar{z}_l}(e^{\eta})e^{\eta_k+\eta_l}\big)^{-1}\big)^{-1}_{ij}\frac{1}{ z_i\overline{z}_j}\big]
	\end{align*}
	At this point we recall that by assumption $z_j$'s are positive reals. Also the complex Hessian matrices $\frac{\partial^2 \varphi_0}{\partial z_k\partial \bar{z}_l}(e^{\xi})$ and $\frac{\partial^2 \varphi_1}{\partial z_k\partial \bar{z}_l}(e^{\eta})$ are real and positive definite. An application of Proposition \ref{AMHMweight} implies that the matrix $\big(\frac{\partial^2 \varphi}{\partial z_i\partial \bar{z}_j}(z)\big)$ is bounded from above by the matrix
	$$\big[\big((1-t)\big(\frac{\partial^2 \varphi_0}{\partial z_k\partial \bar{z}_l}(e^{\xi})e^{\xi_k+\xi_l}\big)^{-1}+t\big(\frac{\partial^2 \varphi_1}{\partial z_k\partial \bar{z}_l}(e^{\eta})e^{\eta_k+\eta_l}\big)^{-1}\big)^{-1}_{ij}\frac{1}{ e^{(1-t)\xi_i+t\eta_i}e^{(1-t)\xi_j+t\eta_j}}\big]$$
	$$\leq \big[\big((1-t)\big(\frac{\partial^2 \varphi_0}{\partial z_k\partial \bar{z}_l}(e^{\xi})\big)+t\big(\frac{\partial^2 \varphi_1}{\partial z_k\partial \bar{z}_l}(e^{\eta})\big)\big)_{ij}\frac{(\frac{1-t}{e^{\xi_i}}+\frac{t}{e^{\eta_i}})^{-1}(\frac{1-t}{e^{\xi_j}}+\frac{t}{e^{\eta_j}})^{-1}}{ e^{(1-t)\xi_i+t\eta_i}e^{(1-t)\xi_j+t\eta_j}}\big].$$
	The matrix part $\big((1-t)\big(\frac{\partial^2 \varphi_0}{\partial z_k\partial \bar{z}_l}(e^{\xi})\big)+t\big(\frac{\partial^2 \varphi_1}{\partial z_k\partial \bar{z}_l}(e^{\eta})\big)\big)$ is bounded from above by definition. The weights $\frac{(\frac{1-t}{e^{\xi_i}}+\frac{t}{e^{\eta_i}})^{-1}}{ e^{(1-t)\xi_i+t\eta_i}}$ are also bounded (by $1$) as the classical inequality between geometric and harmonic means shows. As $\big(\frac{\partial^2 \varphi}{\partial z_i\partial \bar{z}_j}(z)\big)$ is itself positive definite this provides a uniform bound for its entries and hence a $C^{1,1}$ bound in the spatial direction.
	
	{\bf Step 4} (The Hessian in time-spatial and time-time directions) The idea is to exploit Lemma \ref{xt} and Corollary \ref{tt}. Observe that the formulas stated there hold as long as the endpoint $\xi$ and $\eta$ do not touch the boundary - an assumption that is guaranteed from the first step.
	
	Note that if $\xi-\eta$ were  a bounded vector, or more generally if $|(\frac{1-t}{e^{\xi_i}}+\frac{t}{e^{\eta_i}})^{-1}(\xi_i-\eta_i)|$ were under control then the bound on the Hessian in the remaining directions is immediate. Note however that Example \ref{3rdexa} shows that such a bound is not unconditional.
	
	Assume henceforth that $\Omega$ is complete Reinhardt. Note that this implies that its logarithmic image $U$ has the following geometric property: for any $x\in U$, any $k\in\lbrace1,\cdots,n\rbrace$ and any $s<0$ the point $(x_1,\cdots, x_{k-1},x_k+s,x_{k+1},\cdots,x_n)$ belongs to $U$.
	
	Recall the equality $D_{x_k}u_0(\xi)=D_{x_k}u_1(\eta)$, which we rewrite as
	\begin{equation}\label{2dertrick}
		\int_{-\infty}^0D^2_{x_kx_k}u_0(\xi_1,\cdots, \xi_{k-1},\xi_k+s,\xi_{k+1},\cdots,\xi_n)ds
	\end{equation}
	$$=\int_{-\infty}^0D^2_{x_kx_k}u_1(\eta_1,\cdots, \eta_{k-1},\eta_k+s,\eta_{k+1},\cdots,\eta_n)ds.$$
	
	Now, due to Lemma \ref{convtor} and equation (\ref{torconv}) the equality can be further rewritten as
	
	\begin{equation}\label{2dertrickbis}
		\int_{-\infty}^0\frac{\partial^2\varphi_0}{\partial z_k\partial{\overline{z}_k}}(e^{\xi_1},\cdots, e^{\xi_{k-1}},e^{\xi_k+s},e^{\xi_{k+1}},\cdots,e^{\xi_n})4e^{2(\xi_k+s)}ds
	\end{equation}
	$$=\int_{-\infty}^0\frac{\partial^2\varphi_1}{\partial z_k\partial{\overline{z}_k}}(e^{\eta_1},\cdots, e^{\eta_{k-1}},e^{\eta_k+s},e^{\eta_{k+1}},\cdots,e^{\eta_n})4e^{2(\eta_k+s)}ds.$$
	
	Recall now that, by Step 1, both $e^\xi$ and $e^\eta$ are in a compact subset of $\Omega$ and hence the complex Hessians of $\varphi_0$ and $\varphi_1$ are uniformly bounded both from above and below along the integration paths. Hence for a constant $C$ dependent on $\varphi_0,\varphi_1$ one has
	
	$$\int_{-\infty}^0\frac{\partial^2\varphi_0}{\partial z_k\partial{\overline{z}_k}}(e^{\xi_1},\cdots, e^{\xi_{k-1}},e^{\xi_k+s},e^{\xi_{k+1}},\cdots,e^{\xi_n})4e^{2(\xi_k+s)}ds\geq C^{-1}\int_{-\infty}^0e^{2(\xi_k+s)}ds$$
	$$= \frac{C^{-1}}2e^{2\xi_k},$$
	while
	$$\int_{-\infty}^0\frac{\partial^2\varphi_1}{\partial z_k\partial{\overline{z}_k}}(e^{\eta_1},\cdots, e^{\eta_{k-1}},e^{\eta_k+s},e^{\eta_{k+1}},\cdots,e^{\eta_n})4e^{2(\eta_k+s)}ds\leq C\int_{-\infty}^0e^{2(\eta_k+s)}ds$$
	$$= \frac{C}2e^{2\eta_k}.$$
	
	Plugging these into (\ref{2dertrickbis}) one obtains
	$$e^{\xi_k}\leq C^2e^{\eta_k}.$$
Reversing the roles of $\varphi_0$ and $\varphi_1$ 	results in
	$$e^{\eta_k}\leq C^2e^{\xi_k}.$$
	This implies that the vector $\xi-\eta$ is uniformly bounded.
	
	{\bf Step 5} (Implications in complete Reinhardt domains) The arguments so far have shown that $\partial u_0(U)=\partial u_1(U)$ implies that the geodesic is globally $C^{1,1}$ (jointly in space and time) for a complete bounded strictly pseudoconvex Reinhardt domain $\Omega$. The computations in Step 3 also show that for a fixed $\zeta\in A$ the slice function $z\longmapsto \phi(z,\zeta)$ has an explicit formula for the Hessian (away from $M$). Note that the matrix part in (\ref{zzhess}) is locally uniformly bounded from below (and the bound depends on the lower bounds for $D^2_{z\bar{z}}\varphi_j,\ j=0,1$ away from $\partial \Omega$). More specifically the matrix part can be bounded by either
	
$$\frac{\big(\frac{\partial^2 \varphi_0}{\partial z_k\partial \bar{z}_l}(e^{\xi})4e^{\xi_k+\xi_l}\big)}{1-t}\ {\rm or}\ \frac{\big(\frac{\partial^2 \varphi_1}{\partial z_k\partial \bar{z}_l}(e^{\eta})4e^{\eta_k+\eta_l}\big)}{t}.$$

The uniform bound for $\xi-\eta$ (see Step 4) implies in turn that the weights
$$\frac{e^{\xi_i}}{ e^{(1-t)\xi_i+t\eta_i}}\ {\rm or}\ \frac{e^{\eta_i}}{ e^{(1-t)\xi_i+t\eta_i}}$$
		are also uniformly bounded from below. This yields that the slice functions are strictly plurisubharmonic.
\end{proof}

We finish the note with the following open question:
\begin{quest}
Let $\Omega$ be a Reinhardt domain which is not complete, but still the logarithmic image is unbounded. Let $\varphi_0,\varphi_1\in\tilde{\mathcal H}$ be such that the corresponding convex functions $u_0$ and $u_1$ have the same gradient image. Is it still true that $\xi-\eta$ is a bounded vector?
\end{quest}

\end{document}

\bibitem[Abj19]{Abj19} 
\bibitem[Ber15]{Ber15} Berndtsson, B. A Brunn-Minkowski type inequality for Fano manifolds and some uniqueness theorems 
in K{\"a}hler geometry. Invent. Math. 200 (2015) no 1., 149-200.
\bibitem[BB17]{BB17} Berman, R. J., Berndtsson, B. Convexity of the K-energy on the space of K\"ahler
metrics and uniqueness of extremal metrics. J. Amer. Math. Soc. 30 (2017), no. 4, 1165–1196.
\bibitem[Bl12]{Bl12} Blocki, Z. On geodesics in the space of K{\"a}hler metrics. Proceedings of the "Conference in Geometry" dedicated to Shing-Tung Yau, Advanced Lectures in Mathematics 21, (2012), 3-20.
\bibitem[CNS86]{CNS86} Caffarelli, L., Nirenberg, L. and Spruck, J. The Dirichlet problem for the degenerate Monge-Ampre equation. 2 (1986) no 1., 19-27.
\bibitem[Che00]{Che00} Chen, X.X. The space of K\"ahler metrics, J. Diff. Geom. 56 (2000), 189-234.
\bibitem[CTW17]{CTW17} Chu, J., Tosatti, V. and Weinkove, B.  On the Regularity of Geodesics in the Space of K{\"a}hler Metrics. Ann. PDE 3 (2017) no 2, 15 pages.
\bibitem[Fi17]{Fi17}  Figalli, A. The Monge-Amp\`ere equation and its applications. Zurich Lectures in Advanced Mathematics. 
European Mathematical Society (EMS), Z\"urich, (2017), 200 pp.
\bibitem[Gu98]{Gu}  Guan, B. The Dirichlet problem for Monge-Amp\`ere equations in non-convex domains and spacelike hypersurfaces
of constant Gauss curvature. Trans. AMS 350 (1998), no. 12, 4955-4971.
\bibitem[G99]{G} Guan, D. On modified Mabuchi functional and Mabuchi moduli space of K\"ahler metrics on toric bundles. Math. Res. Letters 6 (1999), 547-555.
\bibitem[Gut02]{Gut}  Guti\'errez, C. E. The Monge-Amp\`ere equation. Progress in Nonlinear Differential Equations 
and their Applications, 44. Birkh\"auser Boston, Inc., Boston, MA, (2001), 127 pp.

\bibitem[Ras17]{Ras17} Rashkovskii, A. Local geodesics for plurisubharmonic functions. Math. Z. 287 (2017), no. 1-2, 73-83.

\bibitem[Wal68]{Wal68}  Walsh, J. Continuity of envelopes of plurisubharmonic functions. J. Math. Mech. 18 (1968), 143-148.

\bibitem[Wan95]{Wan95} Wang, X.-J. Some Counterexamples to the Regularity of Monge-Amp\`ere Equations. Proc. AMS 123 (1995) no. 3, 841-845.

ns we obtain Hessian matrix:

By same method as above we search the components of $D^2u_2$:
\begin{eqnarray*}
	u_{xx}&=&\left(4 \left(\frac{x}{1+t}\right)\right)_x=\frac{4}{t+1}.
\end{eqnarray*}
\begin{eqnarray*}
	u_{xt}&=&\left(4 \left(\frac{x}{1+t}\right)\right)_t=\frac{-4x}{(1+t)^2}=u_{tx}.
\end{eqnarray*}
\begin{eqnarray*}
	u_{tt}&=&\left(\frac{-2x^2}{(1+t)^2}+1\right)_{t}
	=\frac{4x^2}{(1+t)^2}
\end{eqnarray*}
>From the above calculations we obtain Hessian matrix:

\begin{eqnarray*}
	&u_{xx}=&\left(4 \left(\frac{x-t}{1-t}\right)\right)_x=\frac{4}{1-t}.
\end{eqnarray*}
\begin{eqnarray*}
	u_{xt}&=&\left(4 \left(\frac{x-t}{1-t}\right)\right)_t=\frac{4(x-1)}{(1-t)^2}=u_{tx}.
\end{eqnarray*}
\begin{eqnarray*}
	u_{tt}&=&\left(-2\left(\left(\frac{x-t}{1-t}\right)^2-1\right)+\frac{4(x-t)(x-1)}{(1-t)^2}\right)_{t}\\
	&=&\frac{-4(x-t)(x-1)}{(1-t)^3}+\frac{4(2x-t-1)(x-1)}{(1-t)^3}\\
	&=& \frac{4(x-1)^2}{(1-t)^3}.
\end{eqnarray*}
>From the above calculations we obtain Hessian matrix:

>From the above analysis, we conclude that the function $u$ is convex in $(-1,1)\times(0,1)$ with  respect to $(x,t)$, which proves that $u$ is solution to problem (\ref{ss}) in $(-1,1)\times (0,1)$.

We compute the norm of $D^2u$ on $[-1,1]\times [0,1]$. We have by definition:
\begin{eqnarray*}
	\max_{[-1,1]\times [0,1]}||D^2u||_{2}&=&\max_{[-1,1]\times [0,1]\cap \Omega_1}||D^2u_1||_{2}+\max_{[-1,1]\times [0,1]\cap \Omega_2}||D^2u_2||_{2}\\
	&+&\max_{[-1,1]\times [0,1]\cap \Omega_3}||D^2u_3||_{2}.
\end{eqnarray*}
The expression of the above competent are the following
$$||D^2u_1||_{2}^2=\frac{16}{(1+t)^2}+\frac{32x^2}{(1+t)^4}+\frac{16x^2}{(1+t)^6} \; \text{in} \;\Omega_1, $$
$$||D^2u_2||_{2}^2=\frac{16}{(1-t)^2}+\frac{32(x+1)^2}{(1-t)^4}+\frac{16(x+1)^4}{(1-t)^6}\; \text{in} \;\Omega_2,$$
and 
$$||D^2u_3||_{2}^2=\frac{16}{(1-t)^2}+\frac{32(x-1)^2}{(1-t)^4}+\frac{16(x-1)^4}{(1-t)^6} \; \text{in}\; \Omega_3.$$ Then we get 
$$\max_{[-1,1]\times [0,1]}||D^2u||_{2}=+\infty,$$
we conclude that $u$ is not $C^{1,1}$ up the boundary of $(-1,1)\times (0,1)$.

\begin{equation*}
	D_{z,w}^2 u=
	\begin{pmatrix}
		u_{z\bar{z}}   &u_{w\bar{z}}   \\
		u_{z\bar{w}}      & u_{w\bar{w}}  
	\end{pmatrix}.
\end{equation*}
We calculate each component of $D_{z,w}^2 u$ in the first expression of $u$. We start by $u_{z\bar{z}}$:
\begin{eqnarray*}
	u_{z\bar{z}}&=&\left(\frac{-\log(2)|w|^2\bar{z}}{|z|^{\log(2)}|z|}+\frac{\bar{z}}{2|z|}\right)_{\bar{z}}\\
	&=&\frac{-\log(2)|w|^2\bar{z}}{|z|^2}\left(\frac{1}{|z|^{\log(2)}}\right)_{\bar{z}}-\frac{\log(2)|w|^2}{|z|^{\log(2)}}\left(\frac{\bar{z}}{|z|^2}\right)_{\bar{z}}+\left(\frac{\bar{z}}{2|z|^2}\right)_{\bar{z}}\\
	&=&\frac{(\log 2)^2|w|^2}{2|z|^{\log(2)+2}}.
\end{eqnarray*}
We calculate by the same way $u_{z\bar{w}}$:
\begin{eqnarray*}
	u_{z\bar{w}}&=&\left(\frac{-\log(2)|w|^2\bar{z}}{|z|^{\log(2)}|z|^2}+\frac{\bar{z}}{2|z|^2}\right)_{\bar{w}}\\
	&=&\frac{-\log(2)w\bar{z}}{|z|^{\log(2)+2}} 
\end{eqnarray*}
We change the roles of $z$ and $w$ in the above expression we obtain $u_{w\bar{z}}$:
$$u_{w\bar{z}}=\frac{-\log(2)z\bar{w}}{|z|^{\log(2)+2}}.$$ 
Finally, we calculate $u_{w\bar{w}}$:
\begin{eqnarray*}
	u_{w\bar{w}}&=&\left(\frac{2\bar{w}}{|z|^{\log(2)}}\right)_{\bar{w}}\\
	&=&\frac{2}{|z|^{\log(2)}}
\end{eqnarray*}
>From the above calculation for each component of  $D_{z,w}^2 u$ we get:

\begin{eqnarray*}
	u_{z\bar{z}}&=&\left(-\frac{\bar{z}}{|z|^2}\left(e^{\frac{\log|w|^2}{1-\log|z|}}-1\right)+\frac{\log|w|^2\bar{z}}{|z|^2(1-\log|z|)}e^{\frac{\log|w|^2}{1-\log|z|}}\right)_{\bar{z}}\\
	&=&\left(-\frac{\log|w|^2}{|z|^2(1-\log|z|)^2}+\frac{\log|w|^2}{|z|^2(1-\log|z|)^2}+\frac{(\log|w|^2)^2}{2|z|^2(1-\log|z|)^3}\right).e^{\frac{\log|w|^2}{1-\log|z|}}\\
	&=& \frac{(\log|w|^2)^2}{2|z|^2(1-\log|z|)^3}.e^{\frac{\log|w|^2}{1-\log|z|}}
\end{eqnarray*}
We pass now to $u_{z\bar{w}}$:
\begin{eqnarray*}
	u_{z\bar{w}}&=&\left(-\frac{\bar{z}}{|z|^2}\left(e^{\frac{\log|w|^2}{1-\log|z|}}-1\right)+\frac{\log|w|^2\bar{z}}{|z|^2(1-\log|z|)}e^{\frac{\log|w|^2}{1-\log|z|}}\right)_{\bar{w}}\\
	&=&\left(-\frac{\bar{z}w}{|z|^2|w|^2(1-\log|z|)}+\frac{-\bar{z}w}{|z|^2|w|^2(1-\log|z|)}+\frac{\log|w|^2\bar{z}w}{|z|^2|w|^2(1-\log|z|)}\right).e^{\frac{\log|w|^2}{1-\log|z|}}\\
	&=& \frac{\log|w|^2\bar{z}w}{|z|^2|w|^2(1-\log|z|)}e^{\frac{\log|w|^2}{1-\log|z|}}.
\end{eqnarray*}
We change the roles of $z$ and $w$ in the above expression we obtain $u_{w\bar{z}}$:
$$u_{w\bar{z}}=\frac{\log|w|^2\bar{w}z}{|z|^2|w|^2(1-\log|z|)}e^{\frac{\log|w|^2}{1-\log|z|}}.$$
Finally, we calculate $u_{w\bar{w}}$:
\begin{eqnarray*}
	u_{w\bar{w}}&=&\left(\frac{2\bar{w}}{|w|^2}e^{\frac{\log|w|^2}{1-\log|z|}}\right)_{\bar{w}}\\
	&=&\frac{2}{|w|^2(1-\log|z|)}e^{\frac{\log|w|^2}{1-\log|z|}}.
\end{eqnarray*}
>From the calculation of each component of $D_{z,w}^2 u$ we obtain:

Finally, from the fact that $$\det(D_{z,w}^2 u)=0 \;\text{in}\; \D\times A,$$ $$u|_{\D\times\{|z|=1\}}=2(|w|^2-1) \; \text{and} \; u|_{\D\times\{|z|=e\}}u=|w|^2-1$$ and the plurisubharmonicity with respect to $(z,w)$, we conclude that $u$ is a geodesic joins $2(|w|^2-1)$ and $|w|^2-1$.

The norm of the matrix $D_{z,w}^2 u$, when $\frac{|w|}{|z|^{\log\sqrt{2}}}<\frac{1}{\sqrt{2}}$ is giving as follows:$$
||D_{z,w}^2 u||_2^2=\frac{(\log 2)^4|w|^4}{4|z|^{2\log(2)+4}}+\frac{2(\log(2))^2|w|^2}{|z|^{2\log2+2}}+\frac{4}{|z|^{2\log2}}.$$
When $\frac{|w|}{|z|^{\log\sqrt{2}}}\geq \frac{1}{\sqrt{2}}$ the norm of $D_{z,w}^2 u$ is giving as follows:
$$||D_{z,w}^2 u||_2^2= \left(\frac{(\log|w|^2)^4}{4(1-\log|z|)^6|z|^4}+\frac{(\log|w|^2)^2}{(1-\log|z|)^4|z|^2|w|^2}+\frac{4}{|w|^4(1-\log|z|)^2}\right).e^{\frac{2\log|w|^2}{1-\log|z|}}$$

$$\max_{\D\times \bar{A}}||D_{z,w}^2 u||_2=\max_{\D\times\bar{A}\cap\left(\frac{|w|}{|z|^{\log\sqrt{2}}}<\frac{1}{\sqrt{2}}\right)}||D_{z,w}^2 u||_2+\max_{\D\times \bar{A}\cap\left(\frac{|w|}{|z|^{\log\sqrt{2}}}\geq \frac{1}{\sqrt{2}}\right)}||D_{z,w}^2 u||_2=+\infty,$$
this implies $u$ is not $C^{1,1}$ up to the boundary of $\D\times A$.

We have $det(D_{z,w}^2 u)$: 
\begin{eqnarray*}
	det(D_{z,w}^2 u)&=&\left(\frac{(\log 2)^2|w|^2}{2|z|^{\log(2)+2}} \right).\left(\frac{2}{|z|^{\log2}} \right)-\left(\frac{-w\bar{z}\log2}{|z|^{\log2+2}}\right).\left(\frac{-z\bar{w}\log(2)}{|z|^{\log2+2}}\right)\\
	&=&\frac{(\log 2)^2|w|^2}{|z|^{2\log(2)+2}}-\frac{(\log(2))^2|w|^2|z|^2}{|z|^{2\log2+4}}\\
	&=&0.
\end{eqnarray*}

As $u$ is now known to be locally
$C^{1,1}$ in $U\times(0,1)$, we know that $Du$ exists at every point and determines uniquely the slope of $L_{\bar{x},\bar{t}}$. Thus

Next we claim that 
$$U\subset\lbrace \xi(x,t)\ |\ (x,t)\in\ U\times (0,1)\rbrace\subset\overline{U}.$$

We shall repeat the arguments from \cite{CNS86}, exAs toric plurisubharmonic functions can be written as the composition of a convex function with a map which we called it here a logarithmic map, this allows us to derive several regularity results concerning geodesics connecting two toric plurisubharmonic functions. To initiate this discussion, let us  begin by introducing the definitions of Reinhardt domains and toric plurisubharmonic functions. 

We start by the definition of Reinhard domain.
A domain $\Omega\subset \mathbb{C}^n$  is said a Reinhard domain if $(z_1,...,z_n)\in \Omega$ implies that $(e^{i\theta_1}z_1,...,e^{i\theta_n}z_n)\in \Omega$ for all $\theta_1,...,\theta_n$.

A toric plurisubharmonic function $u$ defined over the Reinhardt domain $\Omega$ is a plurisubharmonic function which satisfies additionally the following property:
$$u(e^{i\theta_1}z_1,...,e^{i\theta_n}z_n)=u(z_1,...,z_n)\;\text{for all}\; z\; \text{in}\; \Omega.$$
Every toric plurisubharmonic function can be written as follows:
$$u(z_1,..., z_n)=F(\log|z_1|,...,\log|z_n|),$$
where $F$ is convex function defined in $U$, which is the image of the Reinhard domain $\Omega$ by the logarithmic map defined as follows:$$(z_1,...,z_n)\mapsto (\log|z_1|,...,\log|z_n|)\in \mathbb{R}_+^n.$$
We defined this setting the Mabuchi space of strictly plurisubharmonic function as follows:
$$ \bar{\mathcal{H}}=\{\varphi\in C^{1,1}(\Omega): dd^c\varphi>0\;\text{in}\; \Omega\;, \varphi=0\; \text{on} \, \partial\Omega\},$$
where $d^c=\frac{i}{2}(\bar{\partial}-\partial)$ and $d=\bar{\partial}+\partial$.
For every $\varphi_0$ and $\varphi_1$ belonging to the space $\bar{\mathcal{H}}$, we can establish the geodesic equation as given below:
$$\ddot{\varphi}_t-\sum_{i,j=0}^{n}\varphi_t^{i\bar{j}}\partial_i(\dot{\varphi}_t)\bar{\partial}_{j}(\dot{\varphi_t})=0\; \text{in}\; \Omega\times (0,1),$$
where $\varphi^{i\bar{j}}_t$ is inverse of the complex Hessian of $\varphi_t$.

In the next theorem, we will establish the regularity of the geodesic between $\varphi_0$ and $\varphi_1$ using only the Legendre transformation. To achieve this result, we assume that the image of $\Omega$ under  the complex gradient $\varphi_0$ and $\varphi_1$ are the same.
\begin{theorem}
	Let $(\varphi_t)$  be a geodesic path connecting $\varphi_0$ and $\varphi_1$ in  $\mathcal{\bar{H}}$. Furthermore, we assume assumption that \textcolor{red}{$$\nabla \varphi_0(\Omega)=\nabla \varphi_1(\Omega).$$} Then  $(\varphi_t)$ is $C^{1,1}$ over $(0,1)\times \Omega\setminus W$, where $W=\{ z\in\Omega :\exists i\in\{1,...,n\},\;z_i=0\}$.
\end{theorem}
\begin{proof}
	We know from the definition of a toric plurisubharmonic function that $u_0$ and $u_1$, along with the toric geodesic between them, can be expressed as the composition of a convex function $u$ and the logarithmic maps. It is easy to see that the convex function $u_t$, arising from the geodesic $\varphi_t$, satisfies the geodesic Equation (\ref{convexgeodesicequation}) for a convex function. Additionally, we observe that the assumptions on the domain by the complex gradient $\varphi_0$ and $\varphi_1$ imply that the gradient images of $u_0$ and $u_1$ are the same. We conclude from this that we are now in the position to apply Lemma (\ref{thirdlemma}) and obtain 
	$$D_x^2 u_t=\left( t (D^2_x u_0(x))^{-1}+(1-t)(D^2_x u_1(x))^{-1}\right)^{-1}.$$
	To conclude from this formula another one for the geodesic  $\varphi_t$, we need to compute the expression of every component of the complex  Hessian matrix of $\varphi_t$. We have
	\begin{eqnarray*}
		\frac{\partial^2 \varphi_t(z)}{\partial z_i\partial\bar{z}_j}&=& \frac{\partial}{\partial \bar{z}_j}\left(\frac{\partial u_t}{\partial x_i}\frac{ \bar{z}_i}{ |z_i|^2}\right)\\
		&=&\frac{\partial^2 u_t}{\partial x_i \partial x_j}\frac{ \bar{z}_i z_j}{ 4|z_i|^2 |z_j|^2}.\\
	\end{eqnarray*}
	The same equality holds for the function $\varphi_0$ and $\varphi_1$, from this and the assume that $z$ does  not belong to $W$, we conclude that
	$$D_z^2 \varphi_t=\left( t (D^2_z \varphi_0(x))^{-1}+(1-t)(D^2_z \varphi_1(x))^{-1}\right)^{-1},\; \forall z \in\Omega\setminus W\;\;\forall t\in [0,1].$$
	This equation gives us that $\varphi_t$ is $C^{1,1}$ with respect to $z$.Te same method used in the convex case applies here, considering for both the mixed derivative and the derivative with respect to $t$.
	\textcolor{red}{It is not clearly for me until now if we can remove the set $W$ from the theorem above}
\end{proof}
\begin{theorem}Suppose that $u_0$ and $u_1$ are $C^{1,\alpha}(\Omega)$ are two elements belong to $\mathcal{S}$. Then the geodesic $u_t$ between them is $C^{1,\alpha'}(\Omega)$.
\end{theorem}
\begin{proof}
\end{proof}
\section{Weighted Arithmetic-Harmonic means Inequality}
In this section we will prove that the regularity $C^{1,1}$ of the geodesic in the direction of the complex lines.
\begin{lemma}Let $\varphi_t$ be the geodesic connecting $\varphi_0$ and $\varphi_1$. Then the complex Hessian of $\varphi$ vanishes the space direction when we approach the boundary of the domain $\Omega$.
\end{lemma}
\begin{proof}
	Since every toric plurisubharmonic $\varphi$ function can be written as 
	$$\varphi(z_1,z_2,...,z_n)=u(\log|z_1|,\log|z_2|,,,\log|z_n|),$$
	where is $u$ is a convex function. We compute the coefficients of the complex Hessian of $\varphi_t$, we obtain that 
	\begin{eqnarray*}
		\frac{\partial^2 \varphi}{\partial z_i\partial \bar{z}_j}&=&\frac{\partial}{\partial z_i}\left(\frac{\partial u}{\partial x_j}\frac{z_j}{4|z_j|^2}\right)\\
		&=&\frac{\partial^2 u}{\partial x_i\partial x_j }\frac{\bar{z}_iz_j}{ 4|z_i|^2|z_j|^2}
	\end{eqnarray*}
	By setting $x_i=\log|z_i|$, this implies that $$z_i=|z_i|e^{i arg(z_i)}=e^{x_i}e^{i arg(z_i)}= e^{x_i}\theta_i.$$ where $\theta_i=e^{i arg(z_i)}$, which  satisfies $|\theta_i|=1$. By doing the same thing for $z_j$, we find that 
\end{proof}
$$\frac{\partial^2 \varphi}{\partial z_i\partial \bar{z}_j}=\frac{1}{4}e^{-x_i-x_j}\frac{\partial^2 u}{\partial x_i\partial x_j }\theta_i\bar{\theta_j}.$$
We will denote in sequel by  $L_{ij}$ this expression $\frac{1}{4}e^{-x_i-x_j}\theta_i\bar{\theta_j}$. The first thing to do now is to  prove the arithmetic-harmonic mean inequality for the weighted matrices   $\tilde{A}=(\tilde{a}_{ij})=(L_{ij} a_{ij}) $ and $\tilde{B}=(\tilde{b}_{ij})=(L_{ij} b_{ij}$), where $A=(a_{ij})$ and $B=(b_{ij})$  are convex positive defined matrices.
\begin{lemma}Let $\tilde{A}$ and $\tilde{B}$ as defined above. Then we have the following arithmetic-harmonic mean inequality 
	$$ ( t \tilde{A}^{-1}+(1-t)\tilde{B}^{-1})^{-1}\leq t\tilde{A}+(1-t)\tilde{B}\;,  \forall t\in [0,1]$$
	where $\tilde{A}^{-1}$ and $\tilde{B}^{-1}$ are the inverses of $\tilde{A}$ and $\tilde{B}$, respectively.
\end{lemma}
\begin{proof}To prove this inequality, we will follow the paper of. Indeed, we consider the following quantities 
	$$Q_t:=t\tilde{A}+(1-t)\tilde{B}-4(t\tilde{A}^{-1}+(1-t)\tilde{B}^{-1})^{-1}$$
	and 
	\begin{eqnarray*}
		\tilde{Q}_t&=&\left(t\tilde{A}^{-1}+(1-t)\tilde{B}^{-1}\right) Q_t\left(t\tilde{A}^{-1}+(1-t)\tilde{B}^{-1}\right)\\
		&=&\left(t\tilde{A}^{-1}+(1-t)\tilde{B}^{-1}\right) \left(t\tilde{A}+(1-t)\tilde{B}-4(t\tilde{A}^{-1}+(1-t)\tilde{B}^{-1})^{-1}\right)\left(t\tilde{A}^{-1}+(1-t)\tilde{B}^{-1}\right)\\
		&=&\left(t\tilde{A}^{-1}+(1-t)\tilde{B}^{-1}\right) \left(t\tilde{A}+(1-t)\tilde{B}\right)\left(t\tilde{A}^{-1}+(1-t)\tilde{B}^{-1}\right)-4\left(t\tilde{A}^{-1}+(1-t)\tilde{B}^{-1}\right).
	\end{eqnarray*}
	We define now  two other quantities as follow
	$$S^{t}_{\tilde{A}}=\left(t\tilde{A}^{-1}+(1-t)\tilde{B}^{-1}-2t\tilde{A}^{-1}\right)(t\tilde{A})\left(t\tilde{A}^{-1}+(1-t)\tilde{B}^{-1}-2t\tilde{A}^{-1}\right)$$
	$$S^{t}_{\tilde{B}}=\left(t\tilde{A}^{-1}+(1-t)\tilde{B}^{-1}-2(1-t)\tilde{B}^{-1}\right)((1-t)\tilde{B})\left(t\tilde{A}+(1-t)\tilde{B}^{-1}-2(1-t)\tilde{B}^{-1}\right)$$
\end{proof}cept for the very last step.

According to lemma (\ref{lem1}) $(x^0,t^0)$ lies in a simplex $S$ with vertices on $\partial (U\times (0,1))$ on which $u\equiv0$. 
We shall use induction on $k$-the dimension of the simplex  to prove (\ref{am}).

Suppose we have proved the result (\ref{am}) for all $(x^0,t^0)$ lying in any $(k-1)$-dimensional simplex with stated properties and with constant $C=C_{k-1}$. We wish to prove it for $(x^0,t^0)$ in a $k$-dimensional  simplex $S$ with some constant $C_k$. Let $(x^1,t^1)$ be the closest point to $(x^0,t^0)$ on some of the $(k-1)$-dimensional faces of $S$. We put
\begin{equation*}
	(y^s,t^s):=2(x^0,t^0)-(x^1,t^1).
\end{equation*}
By construction $(y^s,t^s)$ lies in $S$. By induction, in the ball $B$ around $(x^1,t^1)$ with radius $\epsilon(x^1,t^1)$, we have 
$$u(x,t) \leq C_{k-1}(|x-x^1|^2+|t-t^1|^2).$$
Using the above inequality, the convexity of $u$ and the equality $u(y^s,t^s)=0$ we obtain
\begin{eqnarray*}
	u(x,t)&=&u((x,t)-\frac{1}{2}(y^s,t^s)+\frac{1}{2}(y^s,t^s))\\
	&=&u(\frac{1}{2}(2(x-x^0)+x^1),2(t-t^0)+t^1))+\frac{1}{2}(y_s,t_s))\\
	&\leq &\frac{1}{2} u((2(x-x^0)+x^1,2(t-t^0)+t^1))\\
	&\leq & 2 C_{k-1}(|x-x^0|^2+|t-t^0|^2),
\end{eqnarray*}
provided $|(x,t)-(x^0,t^0)|\leq\epsilon(x^0,t^0)=\frac{\epsilon(x^1,t^1)}{2}.$ Thus we have established (\ref{am}) with constant $C_k=2C_{k-1}.$ 
Consequently (\ref{am}) holds for any $(x^0,t^0)$ in $\Omega\times (0,1)$ with $C=2^{n-1}C_1$ if it holds in the case $k=1$.

Suppose that there is a segment $L$ with end points $(x^0,t^0)$, $(x^1,t^1)$ on $\partial(U\times (0,1))$, on which $u=0$, while $u\geq 0$ on  $ U \times (0,1)$

Given a compact K\"ahler manifold $(X,\omega)$ the space of {\it K\"ahler potentials} is defined by
$$\mathcal P(X,\omega):=\lbrace u\in C^{\infty}(X,\mathbb R)|\ \omega+i\partial\overline{\partial} u>0\rbrace.$$

A classical construction of Mabuchi \cite{Mab87} endows the space 
$\mathcal P(X,\omega)$ with the structure of an infinite-dimensional Riemannian manifold. 
More precisely at each $u\in \mathcal P(X,\omega)$ the tangent space $T_u\mathcal P(X,\omega)$ is naturally identified with $C^{\infty}(X,\mathbb R)$ and the scalar product between two
{\it vectors} $f,g\in T_u\mathcal P(X,\omega)$ is given by
\begin{equation}\label{L2kahlercase}
	\langle f,g\rangle_u:=\int_Xfg(\omega+i\partial\overline{\partial} u)^n,
\end{equation}
where $n:=dim_{\mathbb C}X$. 

This abstract construction has attracted a lot of interest after the works of Semmes \cite{Sem92} and Donaldson\cite{D99}. In these papers it was shown that for a curve $u_t$ in $\mathcal P(X,\omega)$
the geodesic equation 
$$\ddot{u}_t-|\nabla \dot{u}|^2_{\omega+i\partial\bar{\partial}u}=0$$
in the above setting can be rewritten as a homogeneous complex Monge-Amp\`ere equation. Since then the notion of geodesics in the space of K\"{a}hler metrics
on compact K\"ahler manifolds has been playing a prominent role in K\"{a}hler geometry and has found a lot of applications especially in the uniqueness problem for extremal K\"ahler
metrics- see \cite{BB17} and references therein. 

A major analytical issue in the study of geodesics is their {\it optimal regularity}. The result of Chen \cite{Che00}, with complements by Blocki \cite{Bl12}, shows that geodesics  
always have bounded space-time Laplacian (so in particular they are $C^{1,\alpha}$-smooth
for any $\alpha< 1$). Recently, Chu-Tosatti-Weinkove \cite{CTW17} proved that  the geodesics are globally  $C^{1,1}$-regular in space and time directions.

In a similar vein such a metric construction has been applied to the space of {\it plurisubharmonic functions} in strictly pseudoconvex domains-\cite{Ras17, Abj19}. In this setting
we consider $\Omega\Subset\mathbb{C}^n$- a smoothly  bounded, strictly pseudoconvex domain: in particular
there exists a smooth function  $\rho$ defined  in neighborhood  $\Omega '$ of $\bar{\Omega}$
such that  
$$\Omega=\{z\in \Omega' \;\;| \rho(z)<0\},\ \partial\Omega=\{\rho=0\}$$
with $d\rho\neq 0$ on $\partial\Omega$ and  $dd^c \rho >0$ on $\overline{\Omega}$,
where
$$
d:= \partial+ \bar{\partial}\;,\; d^c:=\frac{i}{2\pi}(\partial-\bar{\partial}).
$$

Then the  Mabuchi space $\mathcal {H}$ of smooth, strictly plurisubharmonic functions is defined by
$$\mathcal{H}:=\{\varphi\in C^{\infty}(\bar{\Omega})|\ dd^c\varphi>0\ {\rm in}\ \overline{\Omega}, \varphi=0\; {\rm on}\ \partial\Omega\}.$$
The space $\mathcal {H}$ again can be endowed with the structure of an infinite dimensional Riemannian manifold, whose tangent space 
$T_{\varphi}\mathcal{H}$ can be identified with the set of functions
in  $C^{\infty}(\bar{\Omega},\R)$, vanishing  at the boundary of $\Omega$. The $L^2$ Mabuchi metric on $\mathcal{H}$ is given by  
$$ 
\langle\psi_1,\psi_2\rangle_\varphi:=\int_{\Omega}\psi_1\psi_2 (dd^c\varphi)^n,
$$
for any $\varphi\in\mathcal{H},\psi_1,\psi_2 \in T_{\varphi}\mathcal{H}$.
The geodesics between two points $\varphi_0$, $\varphi_1$ in $\mathcal{H}$ are defined as the minimizers of
the energy functional
$$
\varphi\longmapsto H(\varphi):= \frac{1}{2}\int_{0}^1\int_{\Omega}(\dot{\varphi}_t)^2( dd^c\varphi_t)^n
,$$
where $\varphi=\varphi_t$ is a path in $\mathcal{H}$ joining $\varphi_0$ and $\varphi_1$. The geodesic equation is obtained by computing the Euler-Lagrange equation of the functional $H$.
It reads
\begin{equation*}\label{geodesicPSH}
	\ddot{\varphi}(t)-|\nabla\;\dot{\varphi}(t)|^{2}_{dd^c\varphi(t)}=0,\;\;
\end{equation*}
where $\nabla$ is the gradient with respect to the metric $dd^c\varphi$.

Just as in the K\"ahler case the existence of a geodesic between  any two points $\varphi_0$ and $\varphi_1$ from $\mathcal{H}$ reduces to the solution of the following Dirichlet problem
\begin{equation}\label{MAcomplex}
	\begin{cases}
		\phi\in PSH(\Omega\times A)\cap C(\overline{\Omega\times A});\\
		(dd^c_{z,\zeta}\phi)^{n+1}=0 &{\rm in}\  \hbox{$\Omega\times A$}; \\
		\phi=\varphi_0 &{\rm in}\ \hbox{$\Omega\times \{|z|=1\}$}; \\
		\phi=\varphi_1 &{\rm in}\ \hbox{$\Omega\times \{|z|=e\}$}; \\
		\phi=0 &{\rm in}\ \hbox{$\partial \Omega\times A$,}
	\end{cases} 
\end{equation}
where $PSH$ stands for the class of plurisubharmonic functions,  $A=\{z\in \C| 1<|z|<e\}$ denotes an annulus in $\C$  and $\phi(z,\zeta)=\varphi_t(z)$ with $t=\log|\zeta|$. More precisely if the solution
$\phi$ is $C^{\infty}$ smooth and strictly plurisubharmonic in the space variables then $t\rightarrow\phi(z,t)$ is the geodesic joining $\varphi_0$ and $\varphi_1$.

The above equation is known as the homogeneous complex Monge-Amp\`ere equation. 

Looking at the {\it real} counterpart of the constructions above it is natural to consider smoothly bounded {\it strictly convex} domains
$U$. Then the analog of $\mathcal H$ is the space $\mathcal S$ of {\it strictly convex} functions i.e.

$$\mathcal{S}:=\{u\in C^{\infty}(\overline{U})| D^2u>0\ {\rm on}\ \overline{U} , u=0\; {\rm on}\ \partial\Omega\}.$$

The $L^2$ metric is given by

$$\langle f_1,f_2\rangle_u:=\int_{U}f_1f_2det(D^2u)$$
for any $u\in\mathcal{S}\ ,f_1,\ f_2 \in T_{u}\mathcal{S}$, where the tangent space is again identified with the 
functions in $C^{\infty}(\bar{U},\mathbb R)$ that vanish at $\partial U$.

Again $u(x,t)$ will be the geodesic provided that the solution is smooth and strictly convex in the space variables.

It is a basic fact that suitably defined {\it weak} solutions to (\ref{MAcomplex}) and (\ref{ss}) exist and are unique- see the next section for more details. It is customary
to call these solutions {\it weak geodesics} although, strictly speaking, these need not be curves in $\mathcal H$ or $\mathcal S$, respectively.

Just as in the K\"ahler case the optimal regularity of weak geodesics is one of the main problems in the theory. A lot is known about the regularity of the solutions in smoothly bounded strictly convex 
domains- see \cite{CNS86} or on general smooth domains in presence of subsolutions- \cite{Gu}. In the case of strip type unbounded domains the regularity issues were analyzed in \cite{LW15}. In particular the optimal regularity one can expect in general is $C^{1,1}$- see \cite{CNS86}.

 {\bf Extension of  convex $C^{1,\alpha}$ functions.}

In the proof of Theorem \ref{thm2} we shall need a classical lemma dealing with extensions of convex $C^{1,\alpha}$ functions. We provide a proof for the sake of completeness.

\begin{lem}\label{c1a}
	Let $U$ be a bounded convex domain and let $u$ be  a $C^{1,\alpha}(U)\cap C(\overline U)$. Suppose moreover that for some affine function $l$ we have $u|_{\partial U}=l_{\partial U}$. Then the function
	$$\tilde{u}(x)=\begin{cases}
		u(x)\ {\rm if}\ x \in \overline{U};\\
		l(x)\ {\rm if}\ x\in \mathbb R^n\setminus\overline{U}	
	\end{cases}$$
	satisfies
	$$\forall y\in U\ \forall x\in\mathbb R^n\ \ \tilde{u}(x)-\tilde{u}(y)-D\tilde{u}(y)\cdot(x-y)\leq C|x-y|^{1+\alpha},$$
	for a constant $C$ depending on $u$ and $l$.
\end{lem}
\begin{proof}
	Of course we do not claim the convexity of $\tilde{u}$, neither that $\tilde{u}\in C^{1,\alpha}$. 
	
	If $x\in U$ then the result follows form the convexity and regularity of $u$ - see \cite{Fi}, Lemma A.32.
	
	Now fix points $y\in U$ $x\in \mathbb R^n\setminus\overline{U}$ and let $z$ be the point of intersection of the segment $[y,x]$ with the boundary of $U$. Then there is $t\in(0,1)$ so that $z=ty+(1-t)x$.
	
	We compute
	\begin{align*}
		&\tilde{u}(x)-\tilde{u}(y)-D\tilde{u}(y)\cdot(x-y)=l(x)-{u}(y)-D{u}(y)\cdot(x-y)\\
		&=\frac{u(z)-tl(y)}{1-t}-{u}(y)-D{u}(y)\cdot(x-y)\leq \frac{u(z)-tu(y)}{1-t}-{u}(y)-D{u}(y)\cdot(x-y),
	\end{align*}
	where we have made use of the definition of $\tilde{u}$, of the affine property of $l$, and from the fact that $u\leq l$ in $U$ as $u$ is convex.
	
	Rearranging terms we obtain
	\begin{align*}
		&\tilde{u}(x)-\tilde{u}(y)-D\tilde{u}(y)\cdot(x-y)\leq\frac{u(z)-u(y)}{1-t}-D{u}(y)\cdot(x-y)\\
		&\leq\frac{Du(y)(z-y)+C|z-y|^{1+\alpha}}{1-t}-Du(y)\cdot(x-y)\\
		&=Du(y)(\frac{z-y}{1-t})-Du(y)(x-y)+(1-t)^{\alpha}C|\frac{z-y}{1-t}|^{1+\alpha}\\
		&=(1-t)^{\alpha}C|x-y|^{1+\alpha},
	\end{align*} 
	which finishes the proof.
\end{proof} 

Exploiting now Proposition \ref{propLag} (iv)
one has

$$u''_t(x)=\frac{\partial^2}{\partial^2x}u(x,t)=\frac{1}{(u^{*}_t(s))''}=
\frac{1}{\frac{\partial^2}{\partial^2s}u^{*}(s,t))}.,$$
where $s=s(x,t)$ is the point realizing the supremum for the partial Legendre transform of $u_t^*$.

Then the expression of the second derivative of $u$ is  provided below:
$$u''_t(x)$$
$$=\frac{9(1-s^2)^{\frac{3}{2}}}{3(1-t)s \sqrt{1-s^2}+t\left(\left(\sin(H)(1-s^2)\right)A(s)+2\left(\sqrt{1-s^2}\cos(H)-3s \sin(H)\right)B(s)\right)},$$
where $H=H(s)=\frac{1}{3}\arccos(-s)-\frac{2\pi}{3}$, $A(s)=\sin(3H)-6\sqrt{1-s^2}$ and $B(s)=s+\cos(3H)$. We conclude from this example that the second derivative of the geodesic $u_t$ between $u_0$ and $u_1$ does not degenerate along the direction $x$ when we approach the boundary, despite not assuming any strict convexity of $u_0$ and $u_1$ up to the boundary.

Based on Lemma (\ref{secondlemma}), we know that $$u^{*}_t(y)=t u_0^{*}(y)+(1-t)u_1^{*}(y).$$
Since $u_0$ and $u_1$ are strictly convex functions in $\Omega$, as a result of this, we get that $<x,y>-u_0(x)$ and $<x,y>-u_1(x)$ are strictly concaves in $\Omega$. We conclude from this the strictly convex and the smoothness of $u_0^{*}$ and $u_1^{*}$ in their domain of definitions, which are $\partial u_0(\Omega)$ and $ \partial u_1(\Omega)$, respectively. Since we need our Legendre transformation for $u_t^{*}$ to be defined in the same domain, we require the following condition to be satisfied: 
$$\partial u_0(\Omega)=\partial u_1(\Omega)=U.$$ Consequently, $u_t^*$ is a smooth strictly convex function in $U$ as a result of the strict convexity and the smoothness of $u_0^*$ and $u_1^*$. Then for every $y\in U$, we have
\begin{equation}\label{linearityoflegendre}
	u^{*}_t(y)=t u_0^{*}(y)+(1-t) u_1^{*}(y).
\end{equation}
Before taking the next step in the proof of this lemma, we need to verify that for every $t\in [0,1]$, we have
{
	$$\Omega=\partial u_t^{*}(U)=\{ t\partial u_0(U)+(1-t)\partial u_1(U), \forall y\in U \}.$$}
Since the domain the definition of $u_t$ is settled. Taking the second derivative  of Equation (\ref{linearityoflegendre}), we obtain  
$$D_y^2 u_t^{*}(y)=t D^2_y u^{*}_0 (y)+(1-t)D_y^2 u_1(y).$$By applying Lemma (\ref{firstlemma}) to $D^2_y u^{*}_0 $,  $D^2_y u^{*}_1$ and $D_y^2 u_t^{*}$, we obtain 
$$(D_x^2 u_t(x(y)))^{-1}=t \left(D^2_x u_0 (x(y))\right)^{-1}+(1-t)\left(D^2_x u_1 (x(y))\right)^{-1}.$$
Using this  equivalence between two invertible matrices $A$ and $B$: $A^{-1}=B$ if and only if $B^{-1}=A$, we get 
$$D_x^2 u_t(x)=\left(t \left(D^2_x u_0 (x(y))\right)^{-1}+(1-t)\left(D^2_x u_1 (x)\right)^{-1}\right)^{-1}.$$
Thus, the proof completed.

\begin{proof}Since we meet all the requirements stated in Lemma (\ref{thirdlemma}),then  we have 
	$$D^2_{x}u_t(x)=\left((1-t)(D^2_x u_0^{*}(x))^{-1}+t(D^2_x u_1^{*}(x))^{-1}\right)^{-1}.$$
	From this formula, we see that $u_t$ is $C^{1,1}$ with respect to $x$ variable in $\Omega$. To complete the proof we need to prove that $u_t$ is $C^{1,1}$ in the direction of $t$ and also in the mixed direction $x$ and $t$.
	We have $$u_t(x)= s(x,t)x-u^{*}_t(s(x,t)).$$
	where $x=\nabla u^*_t (u(s))$. We need to prove that the function $(x,t)\mapsto s(x,t)$ is $C^{1,1}$, to do this we apply the implicit function theorem. We consider the function defined below as:
	\begin{align*}
		F:\Omega\times U\times (0,1)&\longrightarrow \mathbb{R}^{n}\\
		(x,t,s)&\longmapsto \nabla u_{t}^*(s)-x.
	\end{align*}
	The determinant of the Jacobian matrix with respect to $s$ is given below,
	$$\det\left((Jac_{s}F(x,t,s)\right)= \det\left((1-t) D_s^2 u_0^*(s)+t D_s^2 u_1^*(s)\right)\neq 0.$$
	Then by theorem of function implicit, we conclude that $F(x,t,s)=$ is equivalent to $s=g(x,t)$ and also that $s$ is $C^{1,1}$ with respect to $(t,s)$.
	Now, let us compute the first and second derivatives of  the last equation with respect to $t$,
	$$\dot{u}_t(x)=\dot{s}(x,t)x-\dot{u_t^*}(s(x,t))\dot{s}(x,t).$$
	And the second derivative is,
	\begin{eqnarray*} \ddot{u}_t(x)&=&\ddot{s}(x,t)x-\ddot{u_t^*}(s(x,t))\dot{s}(x,t)-\dot{u_t^*}(s(x,t))\ddot{s}(x,t)\\
		&=& \left(x-\dot{u_t^*}(s(x,t)\right)\ddot{s}(x,t)\\
		&=& \left(x- u^{*}_0(s(x,t)+u^{*}_1(s(x,t)\right)\ddot{s}(x,t).
	\end{eqnarray*}
	Now, we need to make the same computation, but this time with respect to $t$ and $x_i$ for all $i$,
	\begin{eqnarray*} \frac{\partial \dot{u}_t(x)}{\partial x_i}&=&\frac{\partial }{\partial x_i}\left(\dot{s}x-\dot{u_t^*}(s)\dot{s}\right)\\
		&=& \frac{\partial \dot{s} }{\partial x_i}x+sx_i-\sum_{k=1}^{n}\frac{\partial\dot{u}_t^*(s)}{\partial y_k}\frac{\partial s_k}{\partial x_i}\dot{s}-\dot{u_t^*}(s)\frac{\partial \dot{s}}{\partial x_i}.
	\end{eqnarray*}
	Then  $u_t$ is $C^{1,1}$ in the $t$ direction and $C^{1,1}$ in both the $t$ and $x$ directions, form the fact that $s$ is $C^{1,1}$.
\end{proof}

\begin{equation}\label{dxu}
	D_xu(x,t)=(1-t)D_x\varphi_0(\xi(x,t))D_x\xi+tD_x\varphi_1(\eta(x,t))D_x\eta=D_x\varphi_0(\xi(x,t)),
\end{equation}

In order to calculate the dependence of $\xi$ of $x$ and $t$ we use the formula 
\begin{equation}\label{xxieta}
	F(t,x,\xi):=(1-t)\xi+tD\varphi_1^{-1}(D\varphi_0(\xi))-x=0,
\end{equation}
which follows from (\ref{helpful}).

We check that

$$0=D_xF(t,x,\xi)+D_\xi F(t,x,\xi)D_x\xi$$
$$=-Id+\big[(1-t)Id+t(D^2\varphi_1)^{-1}((D\varphi_0(\xi)))\times D^2\varphi_0(\xi)\big]D_x\xi$$
$$=-Id+\big[(1-t)(D^2\varphi_0)^{-1}(D\varphi_0(\xi))+t(D^2\varphi_1)^{-1}((D\varphi_1(\eta)))\big]\times D^2\varphi_0(\xi)D_x\xi,$$
where we have used $D\varphi_0(\xi)=D\varphi_1(\eta)$ once again.

Differentiating (\ref{dxu}) we finally obtain

\begin{equation}\label{d2xxu}
	D_{xx}^2u(x,t)=D_x(D_x\varphi_0(\xi(x,t)))=	D^2\varphi_0(\xi)D_x\xi
\end{equation}
$$=\big[(1-t)(D^2\varphi_0)^{-1}(D\varphi_0(\xi))+t(D^2\varphi_1)^{-1}((D\varphi_1(\eta)))\big]^{-1},$$
as claimed.